\definecolor{sonnengelb}{rgb}{0.9,0.5,0} 
\definecolor{brown}{rgb}{0.5,1,0}
\newtheorem{Definition}{Definition}[section]
\newtheorem{Example}{Example}[section]
\newtheorem{thm}{Theorem}
\newtheorem{corollary}{Corollary}
\newtheorem{Problem}{Problem}
\newtheorem{Proposition}{Proposition}
\newproof{Proof}{Proof}
\newtheorem{Remark}{Remark}
\theoremstyle{definition}
\def\mdots{\vbox{\baselineskip=3pt \lineskiplimit=0pt 
\kern2pt \hbox{.}\hbox{.}\hbox{.}}}
\newcommand{\myvdots}{\phantom{1}\llap{\mdots\hspace{0.25ex}}}
\newcommand{\bigdots}{\multirow{2}{*}{\vdots}}
\newcommand{\ob}[1]{\begin{array}{|c|}\hline ~\makebox[0pt]{$#1$}~\end{array}}
\newcommand{\mi}[1]{\begin{array}{|c|}~\makebox[0pt]{$#1$}~\end{array}}
\newcommand{\un}[1]{\begin{array}{|c|}~\makebox[0pt]{$#1$}~\\\hline\end{array}}
\newcommand{\LiKl}[1]{\delimitershortfall=-4pt\ldelim({#1}{0pt}}
\newcommand{\ReKl}[1]{\delimitershortfall=-4pt\rdelim){#1}{0pt}}
\begin{document}
\begin{frontmatter}
\title{The Connection between the Number of Realizations for Degree Lists and Majorization}
\author{Annabell Berger\fnref{fn1}}
\address{Department of Computer Science, Martin-Luther-Universit\"at Halle-Wittenberg, Germany}
\ead{berger@informatik.uni-halle.de}
\fntext[fn1]{This work was supported by the DFG Focus Program Algorithm Engineering, grant MU 1482/4-3.}

\begin{abstract}
The \emph{graph realization problem} is to find for given nonnegative integers $a_1,\dots,a_n$ a simple graph (no loops or multiple edges) such that each vertex $v_i$ has degree $a_i.$ Given pairs of nonnegative integers $(a_1,b_1),\dots,(a_n,b_n),$ (i) the \emph{bipartite realization problem} ask whether there is a bipartite graph (no loops or multiple edges) such that vectors $(a_1,...,a_n)$ and $(b_1,...,b_n)$ correspond to the lists of degrees in the two partite sets, (ii) the \emph{digraph realization problem} is to find a digraph (no loops or multiple arcs) such that each vertex $v_i$ has indegree $a_i$ and outdegree $b_i.$\\ 
The classic literature provides characterizations for the existence of such realizations that are strongly related to the concept of majorization. Aigner and Triesch (1994) extended this approach to a more general result for graphs, leading to an efficient realization algorithm and a short and simple proof for the Erd\H{o}s-Gallai Theorem. We extend this approach to the bipartite realization problem and the digraph realization problem.\\
Our main result is the connection between majorization and the number of realizations for a degree list in all three problems. We show: if degree list $S'$ majorizes $S$ in a certain sense, then $S$ possesses more realizations than $S'.$ We prove that constant lists possess the largest number of realizations for fixed $n$ and a fixed number of arcs $m$ when $n$ divides $m.$ So-called \emph{minconvex lists} for graphs and bipartite graphs or \emph{opposed minconvex lists} for digraphs maximize the number of realizations when $n$ does not divide $m$. 
\end{abstract}

\begin{keyword}
degree list, Gale-Ryser theorem, Erd\H{o}s-Gallai theorem, majorization, minconvex list, opposed sequence, graphical sequence
\end{keyword}

\end{frontmatter}

\section{Introduction}

\paragraph{Realization problems}

In our paper we work with three related problems in the topic of realizing degree lists.

\begin{Problem}[digraph realization problem/loop-digraph realization problem]\label{problem:digraph-realization}
Given is a finite list of pairs $((a_1,b_1),\dots,(a_n,b_n))$
with $a_i,b_i\in \mathbb{Z}_0^+.$ Does there exist 
\begin{enumerate}
\item[(1)] a digraph $G$ without multiple arcs and loops, or
\item[(2)] a digraph $G$ without multiple arcs and at most one loop per vertex
\end{enumerate}
with vertices $v_1,\dots,v_n$ such that $d_G^-(v_i)=a_i$ and $d_G^+(v_i)=b_i$ for all $i$, where $d_G^-(v)$ and $d_G^+(v)$ denote the indegree and outdegree of v in $G$, respectively?
\end{Problem}
 
If the answer is ``yes'', then we call the list in case (1) a \emph{digraphic list} and in case (2) a \emph{loop-digraphic list}. We also call such a digraph $G$ a \emph{digraph realization} or \emph{loop-digraph realization}, respectively. We forbid (0,0) from the list. In case (1) we demand $0 \leq a_i,b_i \leq n-1$ and in case (2) $0 \leq a_i,b_i \leq n.$ It is possible to reduce the loop-digraph realization problem to a \emph{bipartite realization problem}. Namely, one only has to consider the adjacency matrix of a loop-digraph realization as the adjacency matrix of an undirected bipartite graph. Vectors $(a_1,...,a_n)$ and $(b_1,...,b_n)$ then correspond to the lists of degrees in the two partite sets.\\
Furthermore, we consider  the \emph{graph realization problem}. Here, we have to decide for a given list $(a_1,...,a_n)$ whether there exists a graph $G$ with vertex set ${v_1,...,v_n}$ such that $d_G(v_i)=a_i$ for all $i$, where $d_G(v)$ denotes the degree of $v$ in $G$. Analogously to the directed case, we call such a list a \emph{graphic list} and the graph $G$ a \emph{graph realization}. We forbid $0$ in the list.\\
There is a very simple connection between the graph realization problem and the digraph realization problem. Given a list $(a_1,...,a_n)$ with even sum, one constructs the list $((a_1,a_1),...,(a_n,a_n))$ and solves the digraph realization problem. A result of Chen \cite{Chen80} shows that each digraphic list of this form also possesses a symmetric digraph realization $G$; replacing each opposed pair of arcs in $G$ with an edge having the same endpoints shows that $(a_1,...,a_n)$ is graphic.\\
In the classical literature we find two different approaches to solve these realization problems. A recursive algorithm constructing a realization was developed by Havel and Hakimi \cite{Havel55,Hakimi65} for the undirected case and by Kleitman and Wang \cite{KleitWang:73} for the directed case. The so-called \emph{characterization of degree lists} is the second approach; it requires the evaluation of at most $n$ inequalities. Our work is strongly connected with this approach. For that reason we briefly survey these results in this section. There exist several different notions for these results. We decided to use the classical connection to the majorization relation $\prec$ on real vectors, which was introduced by Hardy, Littlewood and Polya \cite{Hardy34}. In our work we use $a$ and $b$ to denote nonnegative integer $n$-tuples $(a_1,\dots,a_n)$ and $(b_1,\dots,b_n)$, respectively, and $(a,b)$ to denote the list $((a_1,b_1),\dots,(a_n,b_n))$ of pairs. Furthermore, we need a notion for the lexicographical sorting of a list. 
{\it Lexicographic ordering} is the relation
$\geq_{lex}\subset \mathbb{Z}^+\times \mathbb{Z}^+$ defined by 
$$(a,b)\geq_{lex} (a',b') \Leftrightarrow a>a' \textnormal{ or } (a=a'~ \textnormal{ and } b>b')$$
;it is a total ordering. Hence, it is possible to number all pairs of a list so that $(a_i , b_i)\geq_{lex} (a_j,b_j)$ if and only if $i\leq j.$ We denote such a labeling of a list by \emph{lexicographically nonincreasing list}.

\begin{Definition}[Majorization]\label{Def:Majorization}
We define then \emph{majorization} relation $\prec$ on $\mathbb{Z}^+\times \mathbb{Z}^+$ by $a \prec a'$ if and only if 
\begin{eqnarray*}
\sum_{i=1}^{k}a_i &\leq &\sum_{i=1}^{k}a'_i \text{ for all }k \in \{1,\dots,n-1\} \text{ and }\\
\sum_{i=1}^{n}a_i &= &\sum_{i=1}^{n}a'_i.\\
\end{eqnarray*}
Vector $a$ is said to be \emph{majorized} by $a'$ or $a'$ \emph{majorizes} $a$, respectively.  
\end{Definition}

Note that this classical definition involves the case that vector $a$ equals vector $a'$. Next we consider a special type of lists with respect to the majorization relation. \emph{Graphic} (\emph{loop-digraphic, digraphic}) \emph{threshold lists} are graphic lists (loop-digraphic lists, digraphic lists) that only have a unique graph realization (loop-digraph realization, digraph realization) the so-called \emph{threshold graph} (\emph{threshold loop-digraph},\emph{threshold digraph}). For the undirected case a definition was given by Chvatal and Hammer \cite[Theorem 1]{Chvatal1977}, for threshold loop-digraphs by Hammer et al. (there called \emph{difference graphs}) \cite[Theorem 2.3.]{Hammer1990}, and for the directed case this definition was given in our Phd-thesis \cite{Berger2011}.\\
It is important to mention that one can find several equivalent characterizations of threshold (loop)-(di)graphs. The first two authors has not stated the uniqueness of the (loop)-(di)graph realizations explicitly; instead Chvatal and Hammer stated in \cite[Theorem 1(ii)]{Chvatal1977} the non-existence of a so-called \emph{swap} or \emph{switch} in a threshold graph, i.e.\ one cannot find edges $\{a,b\}$, $\{c,d\}$ and non-edges $\{a,c\}$ and $\{b,d\}$ together. This is indeed equivalent to the uniqueness of a suitable graph realization. To see that one has to apply a classic result of Peterson \cite{Petersen1891} from 1891; for a given degree list each graph realization can be yield from another graph realization by a series of swaps. Analogous results can be found for loop-digraphs and digraphs with the only difference that one needs a further operation to yield each digraph realization, i.e.\ the reorientation of a directed $3$-cycle see Rao et al.~\cite{RaoJanaBandyopadhyay96}.\\
However, (loop)-(di)graphic threshold lists play an important role for all three realization problems, because they cannot be majorized by another (loop)-(di)graphic list. More exactly, there does not exist a graphic list $a$ that majorizes a graphic threshold list $a'$ \cite[Theorem 3.2.2.8]{MahadevPeled95}. The same is true for each loop-digraphic threshold list $(a',b)$; there is no loop-digraphic list $(a,b)$ such that $a$ majorizes $a'$. There also exist the analogous variant for digraphic threshold lists. We believe that the last two statements for loop-digraphs and digraphs are not stated explicitly in the literature. A reader can easily understand these properties, if (s)he realizes the strong connection between threshold lists and Ferrers matrices, which we point out after the next definition.

\begin{Definition}[loop-digraphic Ferrers matrix]\label{Definition:loop-digraphicFerrersMatrix}
Given a paired list $(a,b)$ such that $a$ is nonincreasing. We construct an $n \times n$-matrix $F$ defined by 
\begin{eqnarray*}
F_{ij}&:=&\begin{cases}
1 &\text{if}~ j\leq b_i\\
0 &\text{if}~ j> b_i.\\ 
\end{cases}\\
\end{eqnarray*}
$F$ is called the \emph{loop-digraphic Ferrers matrix} for list $(a,b)$. 
\end{Definition}

Ferrers introduced this notion in the context of partitions (row and column sums can be interpreted as a partition of $\sum_{i=1}^n a_i$ and its dual partition). For more information see Sylvester \cite{Syl:1882} or the overview about Norman Macleod Ferrers in \cite{NMF}. Clearly, a loop-digraphic Ferrers matrix can be seen as the adjacency matrix of a loop-digraph with loop-digraphic list $(a,b).$ There cannot exist a swap in this loop-digraph, because the matrix consists top down of nonincreasing blocks. Hence, a loop-digraphic Ferrers matrix is a unique realization of list $(a,b)$; thus $(a,b)$ is a loop-digraphic threshold list.\\
Let us consider the $n \times n$-adjacency matrix $A$ of a threshold loop-digraph $G$. Is it possible that such a matrix is not a loop-digraphic Ferrers matrix? It depends from the labeling of the vertices in $G$. If we demand that the row and colum sums are lexicographically nonincreasing, then $A$ is a loop-digraph Ferrers matrix, because the existence of entries $A_{ik}=0$, $A_{il}=1$ for $k<l$ would imply the existence of a swap in contradiction to the uniqueness of $A$. If we do not demand this kind of sorting for the vertices of a threshold loop-digraph then the adjacency matrix is not at all a Ferrers matrix, consider for example a matrix of ones where one column consists of zeros, i.e.\ the adjacency matrix of a threshold loop-digraph. We already stated all these observations in our PhD-thesis~\cite{Berger2011} without a proof, because we thought this is folklore. Cloteaux at al. \cite[Theorem 1]{Cloteaux12} have written a paper to prove these things from our PhD-thesis.\\
However, for us here it is important to understand that the loop-digraphic list of a Ferrers matrix is always a loop-digraphic threshold list. 
More exactly, let $a'_{i}$ denote the $i$th column sum of $F$, then $(a',b)$ is a loop-digraphic threshold list. We call $(a',b)$ the \emph{corresponding loop-digraphic threshold list} of $(a,b)$. Clearly, each list (taking our definition with $0\leq b_i \leq n$) possesses a loop-digraphic Ferrers matrix. In the context of partitions of integers, $a'$ is the conjugate partition of $b.$ We cite the classical characterization result from Gale and Ryser \cite{Ga:57,Ry:57} on loop-digraphs or bipartite graphs, respectively. The idea is to determine for a list $(a,b)$ its loop-digraphic Ferrers matrix with degree list $(a',b)$. If $a \prec a'$, then $(a,b)$ is a loop-digraphic list.

\begin{thm}[Gale, Ryser] \label{Theorem:RyserGale}
Let $(a,b)$ be a paired list such that $\sum a_i=\sum b_i$, the list $a$ is nonincreasing, and $(a',b)$ the corresponding loop-digraphic threshold list. Then $(a,b)$ is loop-digraphic if and only if $a\prec a'$.
\end{thm}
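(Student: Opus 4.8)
The plan is to read a loop-digraph realization as a $0$--$1$ matrix with prescribed margins. If we let $A_{ij}=1$ encode an arc from $v_i$ to $v_j$, then the outdegree condition makes the row sums of $A$ equal to $b$ and the indegree condition makes the column sums equal to $a$; hence $(a,b)$ is loop-digraphic exactly when there is an $n\times n$ matrix over $\{0,1\}$ with row-sum vector $b$ and column-sum vector $a$. The Ferrers matrix $F$ of Definition \ref{Definition:loop-digraphicFerrersMatrix} is one such matrix, with row sums $b$ and column sums $a'$, and I would use it as the reference realization. The identity I would lean on throughout is $\sum_{j=1}^{k}a'_j=\sum_{i=1}^{n}\min(b_i,k)$, which holds because $a'_j=|\{i:b_i\ge j\}|$ is the $j$th column sum of $F$ and row $i$ of $F$ (having its ones packed into columns $1,\dots,b_i$) contributes exactly $\min(b_i,k)$ ones to the first $k$ columns.

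For necessity I would count ones columnwise. Suppose $A$ realizes $(a,b)$; since $a$ is nonincreasing, the first $k$ columns carry the $k$ largest indegrees and contain exactly $\sum_{j=1}^{k}a_j$ ones. Row $i$ has only $b_i$ ones spread over $n$ columns, so it contributes at most $\min(b_i,k)$ of them to the first $k$ columns. Summing over rows yields $\sum_{j=1}^{k}a_j\le\sum_{i=1}^{n}\min(b_i,k)=\sum_{j=1}^{k}a'_j$ for every $k$, while the hypothesis $\sum a_i=\sum b_i=\sum a'_i$ gives equality at $k=n$. By Definition \ref{Def:Majorization} this is precisely $a\prec a'$.

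For sufficiency I would transform $F$ into a realization of $(a,b)$ by a sequence of swaps that keep the row sums equal to $b$ while walking the column-sum vector from $a'$ down to $a$. The engine is the Hardy, Littlewood and Polya description of majorization: since $a\prec a'$ with both vectors nonincreasing, $a$ arises from $a'$ by finitely many unit transfers, each moving one unit of column sum from a coordinate $p$ to a later coordinate $q$ whose current value is strictly smaller. Such a transfer is realizable by one swap: if the current matrix $M$ has column-sum vector $c$ with $c_p>c_q$, then among the $c_p$ rows with a one in column $p$ at most $c_q$ can also have a one in column $q$, so at least $c_p-c_q>0$ rows $i$ satisfy $M_{ip}=1$ and $M_{iq}=0$; setting $M_{ip}:=0$ and $M_{iq}:=1$ decreases $c_p$ by one, increases $c_q$ by one, and leaves every row sum unchanged. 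Performing the swaps in the order dictated by the transfers turns $F$ into a $0$--$1$ matrix with row sums $b$ and column sums $a$, i.e.\ a loop-digraph realization of $(a,b)$.

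The step demanding the most care is the bookkeeping in the sufficiency argument: one must guarantee that the donor column really has strictly more ones than the recipient at the instant of each swap. This is exactly where the hypothesis $a\prec a'$ enters. I would make it explicit by recording the elementary-transfer lemma and verifying the strict inequality: when $a\ne a'$, take the first index $p$ with $a'_p>a_p$ and the first index $q>p$ with $a'_q<a_q$; because $a$ and $a'$ are nonincreasing one checks that $a'_p=a'_q$ would force $a'_q<a_q\le a_p<a'_p$, a contradiction, so $a'_p>a'_q$ and the swap on columns $p$ and $q$ is available. Since each transfer strictly decreases $\sum_i|a_i-a'_i|$ and keeps the intermediate vector majorized between $a$ and $a'$, the induction terminates with column sums exactly $a$.
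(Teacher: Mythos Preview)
Your sufficiency argument is correct and is exactly the paper's route: the paper obtains Theorem~\ref{Theorem:RyserGale} as the special case of Theorem~\ref{Theorem:LoopDigraphSequences} in which $(a',b)$ is the threshold list, and that theorem is proved precisely by starting from the Ferrers matrix, invoking Muirhead's Theorem~\ref{MuirheadLemma} to decompose $a\prec a'$ into unit transfers, and realizing each transfer by a single column swap in the current $0$--$1$ matrix. Your necessity direction (bounding the ones in the first $k$ columns rowwise by $\sum_i\min(b_i,k)$) is the classical counting that the paper simply attributes to Gale and Ryser without reproving.

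One small wording slip: in your last paragraph you appeal to ``$a$ and $a'$ are nonincreasing'', but after the first transfer the intermediate donor vector need not remain nonincreasing. This does not harm the argument, since your chain $a'_q<a_q\le a_p<a'_p$ uses only that $a$ is nonincreasing---which is precisely how the paper's proof of Theorem~\ref{MuirheadLemma} is set up---so just drop the claim that $a'$ is ordered.
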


Although a list $(a,b)$ always has a loop-digraphic threshold list, it does not need to be a loop-digraphic list. Consider for example list $((3,3),(1,3),(2,0))$ and its corresponding threshold list $((2,3),(2,3),(2,0)).$ Since $(2,2,2) \prec (3,1,2)$, $((3,3),(1,3),(2,0))$ is not a loop-digraphic list. Next we give the characterization result for digraphs. Again, we introduce Ferrers matrices in this context.

\begin{Definition}[digraphic Ferrers matrix]\label{DefinitionFerrersMatrixDigraphs}
Let $(a,b)$ be a paired list such that $a$ is nonincreasing. We construct an $n \times n$ matrix $F$ by 
\begin{eqnarray*}
F_{ij}&:=&\begin{cases} 
1 &\text{if}~ (j\leq b_i \textnormal{ and } j<i) \textnormal{ or } (j\leq b_i+1 \textnormal{ and } j>i)\\
0 &\text{otherwise.}\\ 
\end{cases}\\
\end{eqnarray*}
$F$ is called the \emph{digraphic Ferrers matrix} for list $(a,b)$.
\end{Definition}

Let $a'_i$ denote the $i$th column sum of $F$. With an analogous argumentation as after Definition~\ref{Definition:loop-digraphicFerrersMatrix} it can easily be seen that the digraphic list $(a',b)$ of the digraphic Ferrers matrix is a digraphic threshold list. We call list $(a',b)$ the \emph{corresponding digraphic threshold list} for list $(a,b).$ In contrast to the analogous loop-digraph realization problem, a list $(a,b)$ can possess several different digraphic threshold lists. The reason is that different sortings of $(a,b)$ with respect to nonincreasing $a_i$ can lead to different digraphic Ferrers matrices. Nevertheless, the characterization theorem is true for every digraphic threshold list of $(a,b)$. This result is relatively unknown until today. In 1966 Chen \cite{Chen:66} only proved the characterization theorem for lexicographically nonincreasing lists (and its Ferrers matrices). We later proved this theorem in its general form in \cite{Berger2014}. Subsequently we found out that Anstee already observed this in 1982 in the context of bipartite $(0,1)$-matrices with several restrictions \cite{Anstee82}. 

\begin{thm}[Fulkerson, Chen, Anstee \cite{Fulk:60,Chen:66,Anstee82}] \label{Theorem:FulkersonChenAnstee}
Let $(a,b)$ be a paired list such that $\sum a_i=\sum b_i$, the list $a$ is nonincreasing, and $(a',b)$ the corresponding digraphic threshold list. Then $(a,b)$ is digraphic if and only if $a\prec a'$.
\end{thm}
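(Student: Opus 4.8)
The plan is to argue entirely with adjacency matrices. A digraph realization of $(a,b)$ is exactly an $n\times n$ zero-one matrix $A$ with $A_{ii}=0$ for all $i$, row sums $d_G^+(v_i)=b_i$, and column sums $d_G^-(v_j)=a_j$. The digraphic Ferrers matrix $F$ of Definition~\ref{DefinitionFerrersMatrixDigraphs} is one such matrix: a direct check shows that its row $i$ places ones in the leftmost $b_i$ columns of $\{1,\dots,n\}\setminus\{i\}$, so $F$ has row sums $b_i$, zero diagonal, and column sums $a'$. Both directions then reduce to comparing $A$ with $F$ through their column sums.

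\emph{Necessity.} Suppose $(a,b)$ is digraphic with realization $A$. I would fix $k$ and count the ones lying in the first $k$ columns. In any row $i$ of any zero-diagonal matrix with row sum $b_i$, the number of ones in columns $1,\dots,k$ is at most $\min\{b_i,\,|\{1,\dots,k\}\setminus\{i\}|\}$, and this maximum is attained precisely by the leftmost packing, that is, by row $i$ of $F$. Summing this row-wise inequality over all $i$ gives $\sum_{j=1}^k a_j\le\sum_{j=1}^k a'_j$ for every $k$, while $\sum_j a_j=\sum_i b_i=\sum_j a'_j$ because $A$ and $F$ share the same row sums. By Definition~\ref{Def:Majorization} this is exactly $a\prec a'$.

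\emph{Sufficiency.} This is the substantial direction. Assuming $a\prec a'$, I would construct a realization by descending from $F$ in the majorization order through unit transfers of column mass. Whenever $M$ is a zero-diagonal matrix with row sums $b$ and column-sum vector $c\neq a$ satisfying $a\prec c$, there are indices $p,q$ with $c_p>a_p$ and $c_q<a_q$; moving one unit of column sum from $p$ to $q$ yields $c''$ with $a\prec c''\prec c$, and each such step strictly decreases $\sum_j|c_j-a_j|$, so finitely many steps reach $a$. The point is to realize the transfer inside the matrix, turning a one in column $p$ into a one in column $q$ while keeping entries in $\{0,1\}$, the row sums fixed, and the diagonal zero. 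If some row $i\neq q$ has $M_{ip}=1$ and $M_{iq}=0$, swapping these two entries does it, and iterating produces the desired realization.

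The hard part will be exactly the case where no single row admits such a direct move, for instance when every row with a one in column $p$ either already has a one in column $q$ or is row $q$ itself, where the move is blocked to avoid a diagonal one. This zero-diagonal constraint is precisely what separates the statement from the loop-digraph Gale--Ryser Theorem~\ref{Theorem:RyserGale}, and it is the real obstacle. I would resolve it with an augmenting alternating-path argument: from column $p$ one follows a chain of ones and zeros through rows and columns terminating at column $q$, and flipping the entries along the chain effects the net transfer without creating a diagonal one or leaving $\{0,1\}$. Proving that such a chain always exists when $a\prec c$ and $c\neq a$ -- equivalently, checking the Gale--Hoffman cut condition for the bipartite transportation network whose $n$ diagonal cells are forbidden -- is the technical heart, and reducing those cut inequalities to the prefix sums $\sum_{j\le k}a_j\le\sum_{j\le k}a'_j$ supplied by $a\prec a'$ is where the majorization hypothesis is finally consumed.
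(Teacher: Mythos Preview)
Your necessity argument is fine and is essentially the standard one. The sufficiency argument, however, creates a difficulty that the paper's approach avoids entirely.

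The paper proves sufficiency via Theorem~\ref{Theorem:DigraphSequences}, which in turn rests on Muirhead's Theorem~\ref{MuirheadLemma}. The point is not merely that \emph{some} indices $p,q$ with $c_p>a_p$ and $c_q<a_q$ exist, but that Muirhead's choice gives $p<q$ together with the chain $c_p>a_p\ge a_q>c_q$ (using that $a$ is nonincreasing), hence $c_p\ge c_q+2$. This gap of at least $2$ is exactly what kills your ``hard case'': among the rows $i$ with $M_{ip}=1$ and $M_{iq}=0$ there are at least $c_p-c_q\ge 2$, and since all of them satisfy $i\ne p$ (zero diagonal), at most one of them can be the forbidden row $i=q$. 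So a single-entry swap is always available, no alternating path or Gale--Hoffman argument is needed, and the whole sufficiency proof is three lines.

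Your version picks $p,q$ without this ordering, so you genuinely can have $c_p\le c_q+1$ (or even $c_p<c_q$), and then the direct swap may fail. Your proposed rescue via augmenting chains would work in principle, but you have not carried it out, and it is far heavier than necessary. Incidentally, your claim that the intermediate vector $c''=c-e_p+e_q$ still satisfies $a\prec c''$ is also unjustified for arbitrary $p,q$; it holds for Muirhead's specific choice (because the partial sums are strictly separated on $[p,q)$), but not in general. The fix for everything is the same: follow Muirhead's selection of indices, use the resulting gap $c_p\ge c_q+2$, and the zero-diagonal obstruction disappears.
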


Since digraphic lists of the form $(a,a)$ have realizations by symmetric digraphs \cite{Chen:66}, applying Theorem~\ref{Theorem:FulkersonChenAnstee} to $(a,a)$ yields the well-known result of Erd\H{o}s and Gallai for graphic lists. First, we define the \emph{corresponding graphic threshold list} $a'$ of list $a$ to be the vector $a'$ obtained from the digraphic threshold list $(a',a)$ of $(a,a)$ in Definition~\ref{DefinitionFerrersMatrixDigraphs}. 

\begin{thm}[Erd\H{o}s, Gallai \cite{ErGa:60}] \label{Theorem:ErdGallai}
Let $a$ be a nonincreasing integer list with even sum, and $a'$ its corresponding threshold graphic list. Then $a$ is graphic if and only if $a\prec a'$.
\end{thm}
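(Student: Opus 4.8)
The plan is to reduce the graphic case to the digraphic case already settled in Theorem~\ref{Theorem:FulkersonChenAnstee}, exploiting the symmetrization map between graphs and digraphs sketched in the introduction. Throughout I keep the hypothesis that $\sum_{i=1}^n a_i$ is even, which will turn out to be indispensable. The first step is to record the equivalence
\[
a \text{ is graphic} \iff (a,a) \text{ is digraphic}.
\]
From this the theorem follows at once: applying Theorem~\ref{Theorem:FulkersonChenAnstee} to the paired list $(a,a)$, whose two index sums $\sum a_i=\sum a_i$ agree and whose first component $a$ is nonincreasing, gives that $(a,a)$ is digraphic if and only if $a\prec a'$, where $(a',a)$ is the corresponding digraphic threshold list of $(a,a)$. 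By the definition preceding the statement this $a'$ is precisely the corresponding threshold graphic list of $a$, so chaining the two equivalences yields the claim.

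For the easy direction of the equivalence I would argue as follows. Suppose $a$ is graphic with realization $H$. Orienting every edge $\{v_i,v_j\}$ of $H$ into the opposed pair of arcs $(v_i,v_j)$ and $(v_j,v_i)$ produces a loopless digraph in which every vertex $v_i$ has indegree and outdegree equal to $d_H(v_i)=a_i$; hence $(a,a)$ is digraphic.

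The converse is the delicate step, and it is where the even-sum hypothesis and Chen's theorem enter. It is false in general that $(a,a)$ digraphic forces $a$ graphic: for instance $a=(1,1,1)$ has odd sum and is not graphic, yet $(a,a)$ is realized by a directed $3$-cycle. What rescues the implication is that, with $\sum a_i$ even, the result of Chen~\cite{Chen:66} guarantees that a digraphic list of the symmetric form $(a,a)$ admits a \emph{symmetric} realization $G$, i.e.\ one whose adjacency matrix $A$ satisfies $A_{ij}=A_{ji}$. Since $G$ is loopless, its arcs split into opposed pairs $\{(v_i,v_j),(v_j,v_i)\}$, and collapsing each such pair to the single edge $\{v_i,v_j\}$ yields a simple graph in which $d(v_i)$ equals the outdegree of $v_i$ in $G$, namely $a_i$. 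Thus $a$ is graphic.

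The main obstacle is exactly this converse direction: I must invoke Chen's symmetrization result rather than argue by hand, because reading a graph off an arbitrary digraph realization of $(a,a)$ fails—unmatched single arcs obstruct the collapse, and the odd-sum example shows that no purely combinatorial fix can succeed without the parity assumption. Once Chen's theorem is in place, the remaining verification, that the $a'$ produced by the digraphic Ferrers construction for $(a,a)$ coincides with the threshold graphic list named in the statement, is immediate from the definition, and the two hypotheses of Theorem~\ref{Theorem:FulkersonChenAnstee}, namely equal index sums and $a$ nonincreasing, hold trivially for $(a,a)$.
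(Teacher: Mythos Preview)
Your proposal is correct and follows essentially the same route as the paper: the paper's argument (given in the sentence immediately preceding the theorem) is precisely to apply Theorem~\ref{Theorem:FulkersonChenAnstee} to the paired list $(a,a)$ and invoke Chen's result that such digraphic lists admit symmetric realizations. You have simply fleshed out this one-line reduction, including the illustrative odd-sum counterexample showing where the parity hypothesis is used.
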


A short constructive proof was given by Tripathi et al. \cite{Tripathi2010}. An integer list can possess a graphic threshold list but is not graphic. Consider for example integer list $(3,3,1,1).$ To the best of our knowledge the complexity status for the problems of counting the realizations for a given (integer) list, is open for all three realization problems. In contrast, for a related problem -- \emph{the bipartite multigraph realization problem}, i.e.\ counting the number of bipartite multigraphs for a given list, is known to be $\sharp P$-hard \cite{Dyer95}. On the other hand, there has been a lot of work on counting graph realizations and loop-digraph realizations for a given (integer) list in the context of approximation, approximation algorithms, and randomized algorithms; see for example  \cite{Bender78,brendan91,JerrumSinclairVigoda04,Bezakova06,RSA:10,Barvinok08,Canfield08}. However, in our work we concentrate on a connection between different (integer) lists with respect to the number of their realizations.

\paragraph{Our Contribution}
There are different approaches to prove Theorems \ref{Theorem:RyserGale}, \ref{Theorem:FulkersonChenAnstee}, and \ref{Theorem:ErdGallai} using for example network flow theory or induction. In the next section we give generalizations for these theorems. The first one for graphs is from Aigner and Triesch \cite{Aigner1994}. We follow their approach and extend it to the two directed cases. The proofs are very simple and can easily be used for proving these theorems. Additionally, they give a further type of efficient algorithms for constructing suitable realizations. We believe these are the shortest known proofs for the characterization of degree lists.\\
It is well-known that threshold lists possess exactly a unique realization. We found a connection that is a generalization of this insight. In particular we found that for integer lists $a$ and $a'$ such that $a'$ majorizes $a$, the number of graph realizations of $a$ is larger than the number of realizations of $a'.$ We prove an analogous result for all realization problems, using paired lists; for $(a,b)$ and $(a',b)$ such that $a'$ majorizes $a$, the number of (loop)-digraph realizations of $(a,b)$ is larger than the number of realizations of $(a',b).$ To get these results there are several restrictions. For the loop-digraph problem and the graph realization problem vector $a$ must be nonincreasing, and for the digraph problem $(a,b)$ must be lexicographically nonincreasing.\\
It is sometimes possible to find a lower bound on the number of realizations for lists with exponentially many realizations. We do not give a formal result, but we refer to Example~\ref{Example:ExponentialManyLoopdigraphRealizations} to show how such a computation works. Such a result can be very important for the well-known \emph{uniform sampling problem}. In network analysis it is often desirable to sample uniformly at random from among the realizations of a given list. There exist two well-known Metropolis Markov chains for an approximate uniform sampler. The only known efficient chain for the general case is a perfect matching sampler given by Jerrum et al. \cite{JerrumSinclairVigoda04} in reducing a realization problem to a perfect matching problem via Tutte \cite{Tutte52}. This chain has large polynomial running times \cite{BezakovaBhatnagarVigoda07} and is far from being applicable in practice.\\
A more useful approach is the so-called \emph{swap algorithm} \cite{Petersen1891} that was proven to be efficient for several graph classes like (half)-regular (di)graphs \cite{KannanTetaliVempala99,CooperDyerGreenhill07,CooperDyerGreenhill12,Greenhill11,MiklosES13}, but in general its efficiency is unknown. (There exist other approaches than the use of Markov chains; for an overview consider the introduction in \cite{Greenhill11}.) However, to sample realizations there exists no efficient solution for practitioners. On the other hand, the running times of the sampling problem depend on the number of realizations of the input list. If there are only a polynomial number of realizations, one could enumerate all of them and solve the problem by choosing one uniformly at random, avoiding the mentioned approaches.\\
Our results lead to several insights with respect to this problem. In some cases, one can easily determine that there must be exponentially many realizations. Interestingly, we observe that lists that are `near'-threshold lists have not so many realizations as `near'-regular lists, for which the problem has been solved. So, it seems that our results could lead to new ways for sampling realizations of degree lists for lists that are far from regular.\\
More formally, we prove for all three problems that so-called minconvex lists, which are generalizations of regular lists, possess the largest number of realizations under all realizations with fixed $n$ and $m.$ For digraphic lists the result is more complicated. Here, the largest number of realizations is achieved by a special type of minconvex list, i.e.\ opposed lists $(a,b)$ where vector $a$ is nonincreasing and vector $b$ is nondecreasing. Minconvex lists are in certain sense the `contrary threshold lists'. They are minimal in the majorization order and possess the largest number of realizations.

\paragraph{Overview} In Section \ref{Chapter:GeneralizationCharakterization}, we generalize the characterization Theorems \ref{Theorem:RyserGale} and \ref{Theorem:FulkersonChenAnstee}. The proofs lead to a new type of realization algorithm. In Section \ref{Chapter:Majorization}, we explore the connection between majorization and the number of realizations for a given degree list. Furthermore, we show that minconvex lists possess the largest number of realizations.

\section{Generalizations of Characterizations of Degree Lists}\label{Chapter:GeneralizationCharakterization}

Similar but not identical to Mahadev and Peled \cite[Definition 3.1.2]{MahadevPeled95} and Marshall and Olkin \cite{Marshall79} we define \emph{transfers} on integer lists. Let the $i$th {\it unit list} be the $n$-tuple $e_i$ having $1$ in coordinate $i$ and $0$ elsewhere.

\begin{Definition}[transfer]
For an integer list $a'$ with $a'_i\geq a'_j+2$ for $i,j$ such that $1 \leq i<j \leq n$, the list obtained from $a'$ by an {\it (i,j)-transfer} (written $t_{i,j}(a')$) is the list $a'-e_i+e_j$. Sometimes, we simply use the term \emph{transfer} without specifying the indices.
\end{Definition} 

We repeat a classical result of Muirhead \cite{Muirhead03} in a version by Mahadev and Peled \cite[Theorem 3.1.3]{MahadevPeled95} with a small distinction for our specific investigations. This proof gives also an algorithm for obtaining a nonincreasing list from a list that majorizes it. 

\begin{thm}[Muirhead 1902~\cite{Muirhead03}]\label{MuirheadLemma}
If $a$ and $a'$ are nonnegative integer lists such that $a$ is nonincreasing and $a\prec a'$, then $a$ can be obtained from $a'$ by $\kappa$ successive unit transfers, where $\kappa=\frac{1}{2}\sum_{j=1}^n |a_j'-a_j|$. 
\end{thm}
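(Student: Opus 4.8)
The plan is to induct on the total discrepancy $N := \sum_{j=1}^n |a'_j-a_j| = 2\kappa$. When $N=0$ the two lists coincide, so $\kappa = 0$ transfers are needed and there is nothing to prove. For the inductive step, assuming $a'\neq a$, I would exhibit a single legal transfer that moves $a'$ strictly closer to $a$, decreasing $N$ by exactly $2$ while keeping $a$ majorized, and then apply the induction hypothesis to the resulting list.

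To locate the transfer, let $i$ be the smallest index at which $a$ and $a'$ differ. Since the lists agree on the first $i-1$ coordinates, the partial-sum inequality $\sum_{l=1}^{i}a_l \leq \sum_{l=1}^{i}a'_l$ forces $a_i < a'_i$, hence $a'_i \geq a_i+1$. Because the total sums coincide, this surplus at coordinate $i$ must be offset later, so there is some index exceeding $i$ at which $a'$ drops below $a$; let $j$ be the smallest such index, so that $a'_j \leq a_j-1$ while $a'_l \geq a_l$ for every $i \leq l < j$. I would then perform the $(i,j)$-transfer $t_{i,j}(a')=a'-e_i+e_j$, which decreases $|a'_i-a_i|$ and $|a'_j-a_j|$ by $1$ each and so reduces $N$ by exactly $2$.

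The one place where the hypotheses genuinely matter is the legality of this transfer, namely the requirement $a'_i \geq a'_j+2$, and this is the step I expect to be the crux. It is precisely here that the nonincreasing property of $a$ is used: since $i<j$ we have $a_i \geq a_j$, and chaining the two earlier estimates gives $a'_i \geq a_i+1 \geq a_j+1 \geq a'_j+2$. Everything else is bookkeeping, but this inequality would fail without the sortedness assumption on $a$.

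Finally, to keep the induction running I would verify that $a \prec t_{i,j}(a')$. The partial sums of $t_{i,j}(a')$ agree with those of $a'$ outside the range $i \leq k < j$, where each is reduced by $1$. On that range, the minimality of $i$ and $j$ gives $a'_l \geq a_l$ with strict inequality at $l=i$, whence $\sum_{l=1}^{k}(a'_l-a_l)\geq 1$; subtracting $1$ therefore still leaves the partial sums of the transferred list dominating those of $a$. Thus $a \prec t_{i,j}(a')$, and by the induction hypothesis $a$ is reached from $t_{i,j}(a')$ in $\tfrac12(N-2)=\kappa-1$ further transfers, giving $\kappa$ transfers in all.
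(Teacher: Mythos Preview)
Your argument is correct and matches the paper's proof essentially step for step: induction on $\kappa$, choice of the first index $i$ where the lists disagree (the paper phrases this as the first index where the partial sums strictly differ, which is the same thing), choice of the first later index $j$ with $a'_j<a_j$, legality of $t_{i,j}$ via $a'_i\ge a_i+1\ge a_j+1\ge a'_j+2$ using that $a$ is nonincreasing, and the check that $a\prec t_{i,j}(a')$ with $\kappa$ dropping by~$1$.
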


\begin{proof}
For a list $a$, we denote by list $p_j(a)$ the $j$th partial sum, $\sum_{i=1}^j a_i$. We use induction on $\kappa=\frac{1}{2}\sum_{j=1}^n |a_j'-a_j|$; by the definition, each summand is nonnegative.  When the sum is $0$, the lists are the same.  When positive, let $\ell$ be the first index such that $p_{\ell}(a)<p_{\ell}(a')$; by definition, $\ell<n$. Since this is the first position with a difference, $a'_{\ell}>a_{\ell}$. Since $p_n(a)=p_n(a')$, there is a least index $k$ such that $a'_k<a_k$; note that $k>\ell$. Since $a$ is nonincreasing, we get $a'_{\ell}>a_{\ell}\geq a_{k}>a'_{k}$ and thus
\begin{gather} a'_k \geq a'_{\ell+1}+2. \tag{*} \end{gather}
Let $a''$ be a list obtained from $a'$ by an $(\ell,k)$-transfer, i.e.\ $a''=t_{\ell,k}(a')$.  The partial sums of $a''$ are the same as for $a'$, except that $p_j(a'')=p_j(a')-1$ for $\ell\le j<k$.  However, $a'_j\ge a_j$ for $\ell<j<k$ yields $p_j(a')>p_j(a)$ for $\ell\le j<k$. Hence $a\prec a''\prec a'$. The values $|a''_j-a_j|$ are the same as $|a'_j-a_j|$, except for $\ell$ and $k$. Since $a'_{\ell}-a_{\ell}>0$, $|a''_{\ell}-a_{\ell}|=|a'_{\ell}-1-a_{\ell}|=|a'_{\ell}-a_{\ell}|-1$. Since $a'_{k}-a_{k}<0$, $|a''_{k}-a_{k}|=|a'_{k}+1-a_{k}|=|a'_{k}-a_{k}|-1$. This yields $\frac{1}{2}\sum_{j=1}^n |a_j''-a_j|=\kappa -1$. By the induction hypothesis, subsequent transfers turn $a''$ into $a$ as desired.
\end{proof}

In contrast to the original proof we have only ordered the integers in $a$ whereas we omitted the ordering of the integers of $a'.$  This is in fact very important with respect to the digraph realization problem. It turns out that Muirhead's Lemma can be used for the construction of a graph realization, loop-digraph realization, and a digraph realization if one starts with the corresponding threshold list of a given graphic list, loop-digraphic list, or digraphic list, respectively. A crucial observation in constructing realizations is that applying any transfer to a graphic list yields a graphic list, since $d_G(v)\ge d_G(w)+2$ implies the existence of $u\in N(v)\setminus N(w)$ in a realization $G$, and $\{v,u\}$ can be replaced with $\{u,w\}$. The variant for graphic lists was first given by Aigner and Triesch \cite{Aigner1994} and later rediscovered by Mahadev and Peled \cite[Corollary 3.1.4]{MahadevPeled95}. Aigner and Triesch called the order `dominance' rather than `majorization', and several papers continue that usage.

\begin{thm}[Aigner, Triesch \cite{Aigner1994}]\label{Theorem:AignerTriesch}
If $a$ is a nonincreasing nonnegative integer list, and $a'$ is a graphic list with $a \prec a'$, then $a$ is graphic and can be obtained from $a'$ via unit transfers, passing only through graphic lists.
\end{thm}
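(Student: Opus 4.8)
The plan is to combine Muirhead's Lemma (Theorem~\ref{MuirheadLemma}) with the local ``swap'' observation already recorded in the text, namely that applying a single transfer to a graphic list again yields a graphic list. By Theorem~\ref{MuirheadLemma}, since $a$ is nonincreasing and $a\prec a'$, the list $a$ can be reached from $a'$ by $\kappa=\frac{1}{2}\sum_{j=1}^n|a_j'-a_j|$ successive unit transfers. Writing the intermediate lists as $a'=c^{(0)}, c^{(1)},\dots,c^{(\kappa)}=a$, where each $c^{(t+1)}=t_{i,j}(c^{(t)})$ for suitable indices, the whole proof reduces to the single claim that a transfer preserves graphicality together with a construction that realizes it explicitly.

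First I would make the local step precise. Suppose $c$ is graphic with realization $G$ and $c_i\ge c_j+2$. Then in $G$ we have $d_G(v_i)\ge d_G(v_j)+2$, so there must exist a vertex $u\in N(v_i)\setminus N(v_j)$ with $u\notin\{v_i,v_j\}$; otherwise every neighbor of $v_i$ except possibly $v_j$ would also be a neighbor of $v_j$, forcing $d_G(v_i)\le d_G(v_j)+1$, a contradiction. Deleting the edge $\{v_i,u\}$ and inserting the non-edge $\{u,v_j\}$ (which is genuinely a non-edge precisely because $u\notin N(v_j)$, and is not a loop because $u\neq v_j$) yields a new simple graph $G'$ whose degree sequence is exactly $c-e_i+e_j=t_{i,j}(c)$. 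Hence $t_{i,j}(c)$ is graphic, and we have produced a realization of it from a realization of $c$ by a single edge swap.

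Then I would assemble the global result by induction on $\kappa$, walking along the Muirhead chain. The base case $\kappa=0$ is $a'=a$, which is graphic by hypothesis. For the inductive step, each $c^{(t)}$ in the chain is graphic by the local step, so in particular every intermediate list is graphic; applying the edge swap at each step turns a realization of $c^{(t)}$ into a realization of $c^{(t+1)}$, and after $\kappa$ steps we obtain both a realization of $a$ and a certificate that $a$ is graphic. This simultaneously establishes that $a$ is graphic, that it is obtained from $a'$ via unit transfers, and that the chain passes only through graphic lists, which is exactly the statement.

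The one point that genuinely needs care -- and the main obstacle -- is guaranteeing at each step that the index pair $(i,j)$ supplied by Muirhead's Lemma satisfies $c^{(t)}_i\ge c^{(t)}_j+2$, so that the local swap argument is legitimately applicable; this is not automatic from $\kappa>0$ alone but is furnished by inequality~(*) in the proof of Theorem~\ref{MuirheadLemma}, which certifies that the chosen transfer indices always have degree gap at least $2$. A secondary subtlety is that the local argument relies on $a$ being nonincreasing only through its use inside Muirhead's Lemma; the swap construction itself needs only the degree-gap condition, so once the chain of indices is fixed, graphicality propagates cleanly and no further ordering hypothesis on the intermediate lists is required.
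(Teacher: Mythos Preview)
Your proof is correct and follows essentially the same approach as the paper: combine Muirhead's Lemma with the local swap observation (stated by the paper immediately before the theorem) and induct along the transfer chain. The paper does not spell out a separate proof for Theorem~\ref{Theorem:AignerTriesch}, but its proof of the directed analogue (Theorem~\ref{Theorem:DigraphSequences}) is precisely the argument you give, specialized there to digraphs; your handling of the two subtleties---invoking~(*) for the degree-gap condition and noting that nonincreasingness is needed only for Muirhead, not for the swap itself---matches the paper's Remark~\ref{Remark:TheoremDigraphSequences}.
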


We can consider this theorem as a generalization of Theorem \ref{Theorem:ErdGallai} by Erd\H{o}s and Gallai, since it is very easy to determine a graphic threshold list $a'$ majorizing an integer list $a$ by using the Ferrers matrix of Definition \ref{DefinitionFerrersMatrixDigraphs}. We also obtain an algorithm to construct a realization of a given graphic list. In the following, we want to extend this approach to loop-digraphic lists and digraphic lists. 

\begin{thm}\label{Theorem:DigraphSequences}
Let $(a',b)$ be a digraphic list, $(a,b)$ be a list with nonincreasing $a$, and $a \prec a'$. Then $(a,b)$ is a digraphic list and can be obtained from $(a',b)$ by unit transfers between $a'$ and $a$, passing only through digraphic lists.
\end{thm}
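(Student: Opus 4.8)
The plan is to mirror the strategy behind Theorem~\ref{Theorem:AignerTriesch} exactly, replacing the edge-swap argument for graphs by an arc-redirection argument for digraphs. The statement splits into two ingredients: a purely combinatorial decomposition of the hypothesis $a \prec a'$ into elementary steps, and a realization-preservation property of a single such step. For the first ingredient I would simply invoke Theorem~\ref{MuirheadLemma}: since $a$ is nonincreasing and $a \prec a'$, the list $a$ is reached from $a'$ by a chain of $\kappa = \tfrac12\sum_j|a'_j - a_j|$ unit transfers $a' = a^{(0)}, a^{(1)}, \dots, a^{(\kappa)} = a$, where each $a^{(t+1)} = t_{i,j}(a^{(t)})$ for some $i<j$ with $a^{(t)}_i \ge a^{(t)}_j + 2$. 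Because only the indegree vector is altered while $b$ is held fixed, it suffices to prove the following single-step claim and then chain it along this sequence.

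The core step I must establish is: if $(a',b)$ is digraphic and $a'_i \ge a'_j + 2$ for some $i<j$, then $(t_{i,j}(a'),b) = (a' - e_i + e_j, b)$ is digraphic. I would fix a realization $G$ of $(a',b)$, so $d_G^-(v_i) = a'_i$ and $d_G^+(v_k) = b_k$ for all $k$. The transfer asks us to lower the indegree of $v_i$ by one and raise that of $v_j$ by one without touching any outdegree, so the natural operation is to pick an arc $(u,v_i)$ entering $v_i$ and redirect its head, obtaining $(u,v_j)$; this keeps $d_G^+(u)$ unchanged and effects exactly the desired change of indegrees. For the outcome to be a simple loopless digraph I need $u$ to lie in the in-neighbour set $N_G^-(v_i)$ but to be neither an in-neighbour of $v_j$ (else $(u,v_j)$ would be a multiple arc) nor equal to $v_j$ (else $(u,v_j)$ would be a loop); note that $u \ne v_i$ is automatic since $G$ is loopless. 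Such a $u$ should exist by counting: the admissible heads are $N_G^-(v_i) \setminus \bigl(N_G^-(v_j) \cup \{v_j\}\bigr)$, and since $v_j \notin N_G^-(v_j)$ the forbidden set has size exactly $a'_j + 1$, whereas $|N_G^-(v_i)| = a'_i \ge a'_j + 2$, so $N_G^-(v_i)$ cannot sit inside it.

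Chaining this claim along Muirhead's transfer sequence then finishes the proof: starting from the given realization of $(a',b)$ and applying the single-step construction at each of the $\kappa$ transfers produces realizations of $(a^{(1)},b), \dots, (a^{(\kappa)},b) = (a,b)$ in turn. Hence $(a,b)$ is digraphic and every intermediate paired list $(a^{(t)},b)$ is digraphic, which is exactly the ``passing only through digraphic lists'' assertion.

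I expect the single-step claim to be the only genuine obstacle, and within it the delicate point is the loop exclusion: the redirection would fail if every in-neighbour of $v_i$ lay in $N_G^-(v_j) \cup \{v_j\}$. It is precisely the strict inequality $a'_i \ge a'_j + 2$ built into the definition of a transfer --- one more than the $a'_j + 1$ forbidden heads --- that rules this out. This extra $\{v_j\}$ term is the digraph-specific twist absent from the graph case of Theorem~\ref{Theorem:AignerTriesch} (and from the loop-digraph case, where loops are permitted), and keeping careful track of it is what makes the argument go through for loopless simple digraphs.
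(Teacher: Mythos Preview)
Your proposal is correct and follows essentially the same approach as the paper: invoke Muirhead's Theorem~\ref{MuirheadLemma} to decompose $a \prec a'$ into a chain of unit transfers, and at each step redirect an incoming arc of $v_i$ to $v_j$, using the gap $a'_i \ge a'_j + 2$ to guarantee an admissible tail vertex avoiding both $N_G^-(v_j)$ and $v_j$ itself. The paper phrases the pigeonhole slightly differently---it first finds \emph{two} vertices in $N_G^-(v_i)\setminus N_G^-(v_j)$ and then observes that at most one of them can equal $v_j$---but this is the same counting argument you give, and your direct formulation is arguably cleaner.
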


\begin{proof}
Integer list $a$ can be obtained from $a'$ by successive unit transfers via $\kappa$ integer lists $a^i$ (see Theorem~\ref{MuirheadLemma} by Muirhead). We show by induction on $\kappa$ that each $(a^i,b)$ is a digraphic list. Let $G^i$ be a digraph realization of list $(a^i,b)$ and let $(a^{i+1},b)$ obtained by a $(k,\ell)$-transfer, i.e. $a^{i+1}=t_{k,\ell}(a^i).$  With (*) in the proof of Theorem \ref{MuirheadLemma}, we have $a^{i}_k \geq a^{i}_{\ell}+2.$ Hence, there exist two different vertices $j,j' \in V(G^i)$ with $j\neq k$, $j' \neq k$ where $(j,k),(j',k) \in A(G^i)$ and $(j,\ell),(j',\ell) \notin A(G^i).$ Clearly, for at least one of the vertices $j$ and $j'$ we find $j \neq \ell$ or $j'\neq l.$ W.l.o.g.\ we assume $j \neq \ell.$ Then digraph $G^{i+1}$ with arc set $A(G^{i+1}):=(A(G^i) \setminus \{(j,k)\}) \cup \{(j,\ell)\}$ is a digraph realization of list $(a^{i+1},b).$
\end{proof}

\begin{Remark}\label{Remark:TheoremDigraphSequences}
In Theorem \ref{Theorem:DigraphSequences} it is not necessary to sort vector $a$ in the digraphic list $(a,b)$, if it $a$ can be obtained by a unit transfer from $a'$. The reason is that a nonincreasing $a$ is only necessary for the use of Muirhead's Theorem \ref{MuirheadLemma}. The construction of a digraph does not need this sorting.
\end{Remark}

Clearly, this theorem generalizes Theorem \ref{Theorem:FulkersonChenAnstee} of Fulkerson, Chen and Anstee if we demand that $(a',b)$ is the digraphic threshold list of $(a,b).$ Furthermore, we get a new digraph realization algorithm for $(a,b),$ which first determines for a given list $(a,b)$ its threshold list $(a',b)$. In the case that $(a,b)$ is a digraphic list ($a\prec a'$), the algorithm determines with Theorem \ref{MuirheadLemma} and the proof of Theorem \ref{Theorem:DigraphSequences} step by step a list of digraphic lists $(a^1,b),(a^2,b),\dots,(a^r,b)$ where $(a^1 ,b)=(a',b)$, $(a^r,b)=(a,b)$, and corresponding digraph realizations $G^1,G^2,\dots,G^r.$ We can use a slight extension of the proof from Theorem \ref{Theorem:DigraphSequences} for the case of loop-digraphic lists. (We only have to delete the demands $j\neq k, j' \neq k$ and $j \neq \ell.$) We get a generalization of Theorem \ref{Theorem:RyserGale} by Gale and Ryser.

\begin{thm}\label{Theorem:LoopDigraphSequences}
Let $(a',b)$ be a loop-digraphic list, $(a,b)$ be a list with nonincreasing $a$, and $a \prec a'$. Then $(a,b)$ is a loop-digraphic list and can be obtained from $(a',b)$ by unit transfers between $a'$ and $a$, passing only through loop-digraphic lists.
\end{thm}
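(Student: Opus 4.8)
The plan is to follow the blueprint laid out in the proof of Theorem~\ref{Theorem:DigraphSequences} almost verbatim, simplifying it to account for the more permissive combinatorial setting of loop-digraphs. As the author already hints in the paragraph preceding the statement, the only structural difference between the digraph and loop-digraph cases is that loops are now permitted (at most one per vertex), so the constraints $j\neq k$, $j'\neq k$, and $j\neq \ell$ that were needed to avoid creating loops or multiple arcs in the digraph setting can simply be dropped. First I would invoke Theorem~\ref{MuirheadLemma} (Muirhead) to obtain a chain of integer lists $a=a^{\kappa},\dots,a^{1}=a'$ in which each $a^{i+1}$ arises from $a^{i}$ by a single unit transfer; the hypotheses $a$ nonincreasing and $a\prec a'$ are exactly what Muirhead requires. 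The claim is then proved by induction on the transfer index $i$, with base case $(a^{1},b)=(a',b)$ being loop-digraphic by assumption.

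For the inductive step I would assume $(a^{i},b)$ is loop-digraphic with realization $G^{i}$, and that $(a^{i+1},b)=t_{k,\ell}(a^{i},b)$ is obtained via a $(k,\ell)$-transfer, so that inequality $(*)$ gives $a^{i}_{k}\geq a^{i}_{\ell}+2$. Reading $a^{i}_{j}=d^{-}_{G^{i}}(v_{j})$ as the indegree, the gap of at least two in indegrees between $v_{k}$ and $v_{\ell}$ forces the existence of a vertex $j$ with an arc $(j,k)\in A(G^{i})$ but $(j,\ell)\notin A(G^{i})$: otherwise every in-neighbour of $v_{k}$ would also be an in-neighbour of $v_{\ell}$, contradicting the indegree gap. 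I would then rewire by setting
\[
A(G^{i+1}):=\bigl(A(G^{i})\setminus\{(j,k)\}\bigr)\cup\{(j,\ell)\}.
\]
This deletion-insertion lowers the indegree of $v_{k}$ by one and raises that of $v_{\ell}$ by one, while leaving every outdegree unchanged (the arc still emanates from $v_{j}$), so $G^{i+1}$ realizes $(a^{i+1},b)$. In the loop-digraph setting the new arc $(j,\ell)$ is legal even when $j=\ell$ (it is a permitted loop) and even when $j=k$, so no case distinction is needed; this is precisely why one can delete the three inequality constraints from the digraph proof.

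The only point requiring genuine care — and the main obstacle — is verifying that the single in-neighbour $j$ always suffices here, whereas the digraph proof deliberately produced \emph{two} candidate in-neighbours $j,j'$. In the digraph case two candidates were needed because one of them might coincide with $\ell$ or create a parallel arc, and one had to select a usable one; in the loop-digraph case the arc $(j,\ell)$ can never be a forbidden configuration, so a single witness $j$ is enough. I would therefore want to confirm carefully that $(j,\ell)$ is not already present in $G^{i}$ (true by the choice of $j$) and that it does not duplicate an existing arc — which holds because we only ever add the single arc $(j,\ell)$ that was absent. Once the inductive step is established, iterating over the $\kappa$ transfers yields a chain of loop-digraphic lists $(a^{1},b),\dots,(a^{\kappa},b)$ terminating at $(a,b)$, together with explicit realizations $G^{1},\dots,G^{\kappa}$, which proves both that $(a,b)$ is loop-digraphic and that it is reachable from $(a',b)$ through loop-digraphic lists via unit transfers, as claimed.
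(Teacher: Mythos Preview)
Your proposal is correct and follows essentially the same approach as the paper, which simply instructs the reader to repeat the proof of Theorem~\ref{Theorem:DigraphSequences} while deleting the constraints $j\neq k$, $j'\neq k$, and $j\neq\ell$. Your observation that a single witness $j$ suffices in the loop-digraph setting (because the new arc $(j,\ell)$ is never forbidden, even when $j\in\{k,\ell\}$) is exactly the point, and your exposition of why two witnesses were needed in the digraph case but only one here is in fact more explicit than what the paper provides.
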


\begin{Remark}\label{Remark:TheoremLoopDigraphSequences}
In Theorem \ref{Theorem:LoopDigraphSequences} it is not necessary to sort loop-digraphic list $(a,b)$, if $a$ can be obtained by a unit transfer from vector $a'$. The reason is that a nonincreasing vector $a$ is only necessary for the use of Muirhead's Theorem \ref{MuirheadLemma}. The construction of a loop-digraph does not need this sorting.
\end{Remark}

We introduce a special notion for sequences $(a^1,b),(a^2,b),\dots,(a^r,b)$ of (loop)-digraphic lists as they appear in Theorems~\ref{Theorem:DigraphSequences} and \ref{Theorem:LoopDigraphSequences}.

\begin{Definition}[transfer path]
Sequences $(a^1,b),(a^2,b),\dots,(a^r,b)$ of (loop)-digraphic lists $(a^i,b)$, where $a_i$ yields $a^{i+1}$ by a unit transfer, are called \emph{transfer paths}. We denote the value $(r-1)$ as the \emph{length} of a transfer path.
\end{Definition}

Note that the last three theorems give a proof for the existence of at least one transfer path between two (loop)-digraphic lists, where one majorizes the other one. Clearly, there often exist several different such paths with different lengths. We give an example.

\begin{Example}\label{Example:transferPaths}
We consider the two loop-digraphic lists $(a,b)$ and $(a',b)$ with $a=(2,2,2,0)$, $b=(1,1,2,2)$ and $a'=(4,2,0,0)$. Then we find the following transfer paths.
\begin{description}
\item[(1)] ${(a',b)}{,}{~(a^2,b)}{=}{(3,1)}{,}{(3,1)}{,}{(0,2)}{,}{(0,2)}{,}{~(a^3,b)}{=}{(3,1)}{,}{(2 ,1)}{,}{(1,2)}{,}{(0,2)}{,}{~(a,b)}$
\item[(2)] ${(a',b)}{,}{~(a^2,b)}{=}{(3,1)}{,}{(2,1)}{,}{(1,2)}{,}{(0,2)}{,}{~(a,b)}$
\item[(3)] ${(a',b)}{,}{~(a^2,b)}{=}{(4,1)}{,}{(1,1)}{,}{(1,2)}{,}{(0,2)}{,}{~(a^3,b)}{=}{(3,1)}{,}{(2,1)}{,}{(1,2)}{,}{(0,2)}{,}{~(a,b)}$
\item[(4)] ${(a',b)}{,}{~(a^2,b)}{=}{(3,1)}{,}{(3,1)}{,}{(0,2)}{,}{(0,2)}{,}{~(a^3,b)}{=}{(2,1)}{,}{(3,1)}{,}{(1,2)}{,}{(0,2)}{,}{~(a,b)}$
\end{description}
Transfer path (2) is the one constructed in the proof of Theorem \ref{Theorem:LoopDigraphSequences}.
\end{Example}

Note that it is possible that $(a^i,b)$ is not lexicographically nonincreasing (see $(a^3,b)$ in our fourth transfer path). This is indeed important, because several digraphic lists do only possess digraphic threshold lists which are not nonincreasing.

\section{The Connection between Majorization and the Number of Realizations}\label{Chapter:Majorization}

The main result of this section is to see that the number of (loop)-digraph realizations for a given (loop)-digraphic list is smaller than the number of (loop)-digraph realizations for each majorized list. Indeed we prove that the number of (loop)-digraph realizations increases for each (loop)-digraphic list in a transfer path. Since, each pair of lists possesses at least one transfer path as we showed in the last section our claim can be proven by this approach. Briefly we want to give an intuition for this idea. A threshold (loop)-digraphic list can only majorize other (loop)-digraphic lists and has only a unique realization. We asked if this property can be extended: Do lists `near by' threshold lists possess only few realizations in contrast to lists with a `large distance' to a threshold list. This conjecture is true in a certain sense and can give for each list a `feeling' of the number of realizations. Following this result, we can observe that \emph{regular lists} ($a_i=b_i=d$ for all $i \in \{1,\dots,n\}$) possess the largest number of realizations in the set of all lists with $n$ pairs and fixed $m:=\sum_{i=1}^n a_i.$ This is true, since regular lists do not majorize any other list. Note that regular lists only exist in the case when $n$ is a factor of $m.$ It turns out, that (loop)-digraphic lists with a minimum sum of squared indegrees and a minimum sum of squared outdegrees  possess the maximum number of realizations in the set of all lists with fixed $n$ and $m.$ We call such (loop)-digraphic lists \emph{minsquare lists} or more general \emph{minconvex lists}. To see the connection between sum of squares (convex functions) and majorization consider the famous result of Polya, Littlewood and Hardy \cite{Hardy29}. 

\begin{thm}[Hardy, Polya, Littlewood \cite{Hardy29}]\label{Theorem:Hardy}
Let $g:\mathbb{R}\mapsto \mathbb{R}$ be an arbitrary convex function and $a,a' \in \mathbb{R}^n.$
We have $$\sum_{i=1}^{n}g(a_i)\leq \sum_{i=1}^{n}g(a_i')$$
if and only if $a \prec a'.$
\end{thm}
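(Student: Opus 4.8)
The statement is the classical Hardy--Littlewood--P\'olya theorem, and I read the phrase ``an arbitrary convex function'' as a universal quantifier: the inequality $\sum_{i=1}^n g(a_i)\le\sum_{i=1}^n g(a_i')$ is to hold for \emph{every} convex $g$. (For a single fixed $g$ the backward implication is false: a constant $g$ is convex and gives equality regardless of whether $a\prec a'$.) Since $\sum_i g(a_i)$ is invariant under permuting the coordinates, I may assume throughout that $a$ and $a'$ are given by their decreasing rearrangements $a_{[1]}\ge\dots\ge a_{[n]}$ and $a'_{[1]}\ge\dots\ge a'_{[n]}$. The plan is to prove the two implications separately: the forward direction rests on a single convexity estimate for one mass transfer, and the backward direction extracts the defining inequalities of $\prec$ by feeding in a carefully chosen family of convex test functions.

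For the forward direction ($a\prec a'\Rightarrow$ the inequality) I would first isolate the key convexity lemma: if $x\ge y$ and $0\le t\le x-y$, then
$$g(x-t)+g(y+t)\le g(x)+g(y).$$
This follows by writing $x-t=\lambda x+(1-\lambda)y$ and $y+t=(1-\lambda)x+\lambda y$ with $\lambda=(x-y-t)/(x-y)\in[0,1]$, applying the definition of convexity to each term, and adding; the coefficients of $g(x)$ and of $g(y)$ each total $1$. Thus a transfer moving mass from a larger to a smaller coordinate never increases $\sum_i g$. It remains to express the passage from $a'$ to $a$ as a finite composition of such transfers. For integer vectors this is exactly Muirhead's Theorem \ref{MuirheadLemma}, whose unit transfers act on coordinates with $x\ge y+2$, so that $t=1\in[0,x-y]$ and the lemma applies at every step. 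For general real vectors one invokes the analogous $T$-transform decomposition of majorization, or equivalently writes $a=Da'$ for a doubly stochastic matrix $D$ and applies Jensen's inequality coordinatewise, $g(a_i)=g\!\left(\sum_j D_{ij}a_j'\right)\le\sum_j D_{ij}\,g(a_j')$, then sums over $i$ and uses that each column of $D$ sums to $1$. Either route yields $\sum_i g(a_i)\le\sum_i g(a_i')$.

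For the backward direction I assume the inequality for all convex $g$. Taking $g(x)=x$ and then $g(x)=-x$ (both convex) forces $\sum_i a_i=\sum_i a_i'$, which is the equality clause of Definition \ref{Def:Majorization}. For the partial-sum inequalities I would test with the convex ramp functions $g_t(x)=\max(x-t,0)$. Fixing $k<n$ and putting $t=a'_{[k]}$, a direct computation gives $\sum_i g_t(a_i')=\sum_{i=1}^{k}a'_{[i]}-k\,t$, since coordinates strictly below $t$ and coordinates tied at $t$ both contribute $0$. On the other hand, dropping negative summands and using $g_t(x)\ge x-t$ gives $\sum_i g_t(a_i)\ge\sum_{i=1}^{k}(a_{[i]}-t)=\sum_{i=1}^{k}a_{[i]}-k\,t$. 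The hypothesis $\sum_i g_t(a_i)\le\sum_i g_t(a_i')$ then cancels the two $-k\,t$ terms and leaves $\sum_{i=1}^{k}a_{[i]}\le\sum_{i=1}^{k}a'_{[i]}$, which together with the equality of full sums is precisely $a\prec a'$.

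The main obstacle I anticipate is the forward direction in the genuinely \emph{real} setting: unlike the integer case handled by Theorem \ref{MuirheadLemma}, one must first justify that $a\prec a'$ admits a $T$-transform (equivalently, doubly stochastic) representation before the convexity lemma can be chained. A secondary, purely technical point is the bookkeeping of ties in the backward direction --- when several coordinates equal the threshold $a'_{[k]}$ one must check that $\sum_i g_t(a_i')$ still equals $\sum_{i=1}^{k}a'_{[i]}-k\,t$ exactly, which holds because each tied term contributes $a'_{[k]}-t=0$.
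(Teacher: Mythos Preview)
The paper does not supply a proof of this theorem; it is quoted as the classical result of Hardy, Littlewood and P\'olya and used as a black box, so there is no in-paper argument to compare against. Your proposal is a correct and standard proof of the Hardy--Littlewood--P\'olya characterization: the forward direction via the doubly-stochastic/Jensen argument (with the integer special case handled by Theorem~\ref{MuirheadLemma}) and the backward direction via the linear and ramp test functions $g(x)=\pm x$ and $g_t(x)=\max(x-t,0)$ are exactly the textbook route. Your reading of ``an arbitrary convex function'' as a universal quantifier is the intended one, and your handling of ties at the threshold $t=a'_{[k]}$ is correct. One minor caveat worth flagging: the paper's Definition~\ref{Def:Majorization} of $\prec$ is stated without rearranging the vectors, whereas your argument (and the classical theorem) implicitly works with decreasing rearrangements; since $\sum_i g(a_i)$ is symmetric this is harmless, but it is a discrepancy between the paper's notation and the statement as literally written for general $a,a'\in\mathbb{R}^n$.
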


There is a nice classical connection which is worth to be mentioned here. Let us define the following set $DS$ with
$$DS:=\{\textnormal{a is a graphic list.}\}$$
Then the \emph{polytope of graphic lists} is defined as the convex hull $D_n$ of $DS.$ Koren \cite{Koren1973213} proved 
the following theorem.

\begin{thm}[Koren \cite{Koren1973213} ]\label{Theorem:convexeHuelle}
A graphic list is an extreme point of $D_n$ if and only if it is a threshold list.
\end{thm}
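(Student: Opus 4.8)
The plan is to prove the two implications separately, exploiting the fact that $D_n$ is the convex hull of the \emph{finite} set $DS$ of graphic lists, so that every extreme point of $D_n$ already lies in $DS$, and a graphic list $a$ fails to be extreme precisely when it is a proper convex combination of graphic lists different from $a$. For the purposes of this polytope I would read ``graphic list'' without the convention forbidding $0$, so that $DS$ contains every graphic sequence on $n$ vertices; this changes nothing for the realization problems but lets the reduction below stay inside the class.

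First I would settle \emph{extreme $\Rightarrow$ threshold} by contraposition. Suppose $a$ is graphic but not a threshold list. By the swap-characterization of threshold graphs of Chv\'atal and Hammer \cite{Chvatal1977} together with Petersen's theorem \cite{Petersen1891}, some realization $G$ then contains a swap, i.e.\ there are four distinct vertices with edges $\{p,q\},\{r,s\}$ and non-edges $\{p,r\},\{q,s\}$. Put $G_1:=(G\setminus\{p,q\})\cup\{s,q\}$ and $G_2:=(G\setminus\{r,s\})\cup\{p,r\}$; both are simple graphs, since $\{s,q\}$ and $\{p,r\}$ are non-edges of $G$, and their degree lists are $a-e_p+e_s$ and $a+e_p-e_s$. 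These two graphic lists are distinct (as $p\neq s$) and their average is $a$, so $a$ is not an extreme point of $D_n$. Hence every extreme point is a threshold list. (The same construction, used to move one unit of degree from the lower to the higher endpoint, also recovers the cited maximality of threshold lists in the majorization order of Section~\ref{Chapter:GeneralizationCharakterization}.)

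For the converse \emph{threshold $\Rightarrow$ extreme} I would induct on $n$, peeling off a vertex of extremal degree. By the classical structure of threshold graphs \cite{Chvatal1977,MahadevPeled95}, the unique realization $T$ of a threshold list is obtained by repeatedly adding isolated or dominating vertices, so $T$ has a vertex $v$ that is either isolated ($d_T(v)=0$) or dominating ($d_T(v)=n-1$); after permuting coordinates I may take $v$ to be the first one. Since $0\le x_1\le n-1$ for every graphic list, each of the hyperplanes $\{x_1=0\}$ and $\{x_1=n-1\}$ supports $D_n$, and the induced face $F:=D_n\cap\{x_1=c\}$ (with $c\in\{0,n-1\}$) is the convex hull of the graphic lists lying on it. A graphic list has $x_1=0$ iff its first vertex is isolated and $x_1=n-1$ iff its first vertex is dominating; deleting that vertex then gives a bijection between these lists and all graphic lists on $n-1$ vertices (leaving the remaining coordinates unchanged in the isolated case, subtracting $1$ from each in the dominating case). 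This bijection extends to an affine isomorphism $F\cong D_{n-1}$ that sends $a$ to the degree list of $T-v$, which is again a threshold graph, hence a threshold list. By the induction hypothesis this image is an extreme point of $D_{n-1}$, so $a$ is an extreme point of $F$ and therefore of $D_n$; the base case $n=1$ is the single point $\{(0)\}$.

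The main obstacle is this second direction, and inside it the point to get exactly right is that the extremal-coordinate face $F$ is \emph{affinely isomorphic to the whole} degree-sequence polytope $D_{n-1}$, and that threshold-ness is inherited under deletion of an isolated or dominating vertex; once these two facts are in place the induction runs immediately. A secondary subtlety is the $0$-entry convention: peeling a dominating vertex can create degree-$0$ vertices, so the induction must be performed for the full degree-sequence polytope rather than for lists with strictly positive entries, which is why I would state the theorem for $D_n$ built from all graphic sequences.
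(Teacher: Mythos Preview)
The paper does not prove this theorem; it is quoted from Koren~\cite{Koren1973213} and used only as background for the discussion following it. So there is nothing to compare against, and the relevant question is simply whether your argument is sound.

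It is. The non-threshold $\Rightarrow$ non-extreme direction is clean: from a swap on $p,q,r,s$ you produce two simple graphs with degree lists $a\pm(e_p-e_s)$, both graphic, whose midpoint is $a$. For the converse, your induction is correct once two facts are granted, and both are standard: (i) the face $D_n\cap\{x_1=c\}$ with $c\in\{0,n-1\}$ is the convex hull of the graphic lists on that hyperplane (because the hyperplane supports $D_n$), and the affine maps $(0,a_2,\dots,a_n)\mapsto(a_2,\dots,a_n)$ resp.\ $(n-1,a_2,\dots,a_n)\mapsto(a_2-1,\dots,a_n-1)$ carry those graphic lists bijectively onto all graphic lists on $n-1$ vertices, giving $F\cong D_{n-1}$; (ii) deleting any vertex of a threshold graph yields a threshold graph, which follows either from the recursive description you cite or from the forbidden-subgraph characterisation. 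Your remark about the $0$-convention is exactly the right caveat: the induction has to be run over the full degree-sequence polytope (allowing isolated vertices), not over the restricted set used elsewhere in the paper.
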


A generalization of Koren's condition for multigraphs with bounded multiplicity of edges, bipartite graphs and directed graphs was given by Isaak and West \cite{IsaakW10}. The following ideas are folklore but we want to point out the interesting connection between majorization and threshold lists. Hence, each graphic list $a$ can be constructed by a convex combination of threshold lists $x_i:=(x_{i1},\dots,x_{in}),$ i.e.\ $a=\sum_{i=1}^k \lambda_i x_{i}$ where $\sum_{i=1}^k \lambda_i=1$ $(\lambda_i > 0).$ For an arbitrary convex function $g\colon\mathbb{R} \mapsto \mathbb{R}$---using the inequality of Jensen---we get
$$\sum_{j=1}^{n}g(a_j)=\sum_{j=1}^{n}g(\sum_{i=1}^k\lambda_ix_{ij})
\leq\sum_{j=1}^{n}\sum_{i=1}^k \lambda_i g(x_{ij})
 =\sum_{i=1}^{k}\lambda_i \sum_{j=1}^n  g(x_{ij}).$$
We can conclude that there exists at least one $x_i$ with $\sum_{j=1}^{n}g(x_{ij})\geq \sum_{j=1}^{n}g(a_j).$ Assume this is not the case. Then we have 
$$\sum_{i=1}^{k}\lambda_i \sum_{j=1}^n g(x_{ij})<\sum_{i=1}^{k}\lambda_i \sum_{j=1}^n g(a_j)=\sum_{j=1}^{n}g(a_j)$$
in contradiction to our above inequality. By Theorem \ref{Theorem:Hardy} there exists an $x_i$ with $a \prec x_i.$ Hence, the connection of majorization and threshold lists follows from the property that threshold lists are extreme points of the polytope of graphic lists. It is very natural to ask for the number of realizations of graphic lists that are not threshold lists. Clearly, the number is larger than one. But a very intuitive idea for us was to ask for a more well-founded connection. We start our discussion with loop-digraphic lists, because the proofs are simpler than in the case of digraphic lists and graphic lists. Nevertheless, we try to develop our proofs in an analogous way for all cases.

\subsection{The number of loop-digraph realizations and majorization}

In a first step, we define the set $R_1(a,b)$ of all loop-digraph realizations for a given loop-digraphic list $(a,b)$ and denote by $N_1(a,b):=|R_1(a,b)|$ the number of loop-digraph realizations of $(a,b).$ We want to give a connection between a unit $(i,j)$-transfer on a loop-digraphic list $(a',b)$ yielding $(a,b)$ and a corresponding operation on a loop-digraph realization $G'$ of $(a',b)$ leading to a loop-digraph realization $G$ of $(a,b).$ Let us now consider the adjacency matrix $A'$ for an arbitrary loop-digraph realization of list $(a',b).$ Then $a'_i,a'_j$ are the sums of the $i$th or $j$th columns, respectively. Since $a'$ yields $a$ by a unit transfer we have $a'_i \geq a'_j+2.$ Hence, it is possible to shift $|a'_i-a_j'|\geq 2$  `ones' in the $i$th column to the $j$th column. So, we can construct $|a'_i-a_j'|\geq 2$ different loop-digraph realizations of $(a,b)$ from the one loop-digraph realization $G'$ of $(a',b)$ with such shifts. More formally, we define:
 
\begin{Definition}\label{Definition:ShiftLoopDigraph}
We call an operation on the $(n \times n)$-adjacency matrix $A'$ of loop-digraph realization $G'\in R_1(a',b),$ which switches the entries of $A'_{ki}=1$ and $A'_{kj}=0$ for one $k$, an \emph{$(i,j)$-shift on $G'$}. We denote the subset of loop-digraph realizations from $(a,b)$ which are constructed by $(i,j)$-shifts from a loop-digraph realization $(a',b)$ by $\textnormal{Shift}_{ij}(G',(a,b)).$
\end{Definition} 
 
 Let us now consider two arbitrary loop-digraph realizations of $(a',b),$ namely $G_1'$ and $G_2'$. Then it can happen that shifts in these two loop-digraphs produce an identical loop-digraphic realization, i.e.\ $\textnormal{Shift}_{i,j}(G_1',(a,b)) \cap \textnormal{Shift}_{i,j}(G_2',(a,b)) \neq \emptyset.$ We give an example.

\begin{Example}\label{Example:ShiftmengeLoopdigraphs}
We consider the adjacency matrices $A_1'$ and $A_2'$ of the two loop-digraph realizations $G_1'$ and $G_2'$ for loop-digraphic list $(a',b):=((4,1),(2,1),(0,1),(0,1),(0,1),(0,1))$ and apply on each of them all $4$ possible $(1,2)$-shifts. These shifts result in the loop-digraphic list $(a,b)=((3,1),(3,1),(0,1),(0,1),(0,1),(0,1)).$ Hence, $a'$ yields $a$ by a unit $(1,2)$-transfer. For

$A_1':=\left(\begin{matrix}
0&1&0&0&0&0&0&0\\
0&1&0&0&0&0&0&0\\
1&0&0&0&0&0&0&0\\
1&0&0&0&0&0&0&0\\
1&0&0&0&0&0&0&0\\
1&0&0&0&0&0&0&0\\
\end{matrix}\right)$ and 
$A_2':=\left(\begin{matrix}
0&1&0&0&0&0&0&0\\
1&0&0&0&0&0&0&0\\
0&1&0&0&0&0&0&0\\
1&0&0&0&0&0&0&0\\
1&0&0&0&0&0&0&0\\
1&0&0&0&0&0&0&0\\
\end{matrix}\right)$ we get 
$~$\\
$\textnormal{Shift}_{1,2}(G_1',(a',b)) \cap \textnormal{Shift}_{1,2}(G_2',(a,b))=\left\{\left(\begin{matrix}0&1&0&0&0&0&0&0\\0&1&0&0&0&0&0&0\\0&1&0&0&0&0&0&0\\1&0&0&0&0&0&0&0\\
1&0&0&0&0&0&0&0\\1&0&0&0&0&0&0&0\\\end{matrix}\right)\right\}.$ More generally there are ${6 \choose 4}$ possible loop-digraph realizations for $(a',b)$ and ${6 \choose 3}$ loop-digraph realizations for $(a,b).$
\end{Example}

Carefully comparing the two loop-digraphs of the adjacency matrices we observe that the symmetric difference of their arc sets contains exactly four arcs forming a directed alternating cycle, i.e. all four arcs alternate in their direction and in their appearance in $G_1'$ and $G_2'$. Especially, loop-arcs are possible. Moreover, all of these arcs correspond to the first or second column of the matrices -- the same columns as in the $(1,2)$-shift. In the following proposition we see that $(i,j)$-shifts on two loop-digraph realizations can lead to identical realizations if and only if the symmetric difference of their arc set is an alternating $4$-cycle and contains arcs which can be shifted. 

\begin{Proposition}\label{PropositionSymmetrischeDifferenz}
Let $(a,b)$ and $(a',b)$ be two different loop-digraphic lists such that $a'$ yields $a$ by a unit $(i,j)$-transfer. Furthermore, we assume that $G_1'$ and $G_2'$ with $G_1'\neq G_2'$ are loop-digraph realizations of $(a',b)$. $\textnormal{Shift}_{i,j}(G_1',(a,b)) \cap \textnormal{Shift}_{i,j}(G_2',(a,b)) \neq \emptyset$ if and only if the symmetric difference of their arc sets is a directed alternating cycle on the four vertices $i,j,k,k'$ with $k \neq k'$, i.e.\ $A(G_1')\Delta A(G_2')=\{(k,i),(k',j),(k,j),(k',i)\}$ with $(k,i),(k',j) \in A(G_1')\setminus A(G_2')$ and $(k,j),(k',i) \in A(G_2')\setminus A(G_1')$.
\end{Proposition}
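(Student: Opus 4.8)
The plan is to prove both implications by bookkeeping with the arc sets, using the fact that an $(i,j)$-shift at row $k$ on a realization $G'$ is precisely the replacement of the arc $(k,i)$ (where $A'_{ki}=1$) by the arc $(k,j)$ (where $A'_{kj}=0$), leaving every other entry fixed. Throughout I write $X \Delta Y$ for the symmetric difference of arc sets and use that $\Delta$ is associative with $X\Delta X=\emptyset$, so that $A(G_1')\Delta A(G_2')=(A(G_1')\Delta A(G))\Delta(A(G)\Delta A(G_2'))$ for any third digraph $G$. Recall also that $i\neq j$ by the definition of a transfer.

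First I would treat the ``if'' direction. Assuming $A(G_1')\Delta A(G_2')=\{(k,i),(k',j),(k,j),(k',i)\}$ with the stated memberships, I perform the $(i,j)$-shift at row $k$ on $G_1'$ --- legal because $(k,i)\in A(G_1')$ and $(k,j)\notin A(G_1')$ --- obtaining a realization $G$ of $(a,b)$, and separately the $(i,j)$-shift at row $k'$ on $G_2'$ --- legal because $(k',i)\in A(G_2')$ and $(k',j)\notin A(G_2')$ --- obtaining $H$. A direct comparison shows that $G$ and $H$ agree with the common part $A(G_1')\cap A(G_2')$ off the four special arcs, while on these four arcs both have $(k,j),(k',j)$ present and $(k,i),(k',i)$ absent; hence $G=H$ lies in both shift sets.

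For the ``only if'' direction, suppose some $G$ lies in both shift sets, so $G$ arises from $G_1'$ by a shift at some row $k$ and from $G_2'$ by a shift at some row $k'$. Reading these shifts backwards gives $A(G_1')\Delta A(G)=\{(k,i),(k,j)\}$ and $A(G_2')\Delta A(G)=\{(k',i),(k',j)\}$, whence $A(G_1')\Delta A(G_2')=\{(k,i),(k,j)\}\Delta\{(k',i),(k',j)\}$. The crucial step is to exclude $k=k'$: if $k=k'$ the two pairs coincide and the symmetric difference is empty, forcing $G_1'=G_2'$, contrary to hypothesis. Therefore $k\neq k'$, which together with $i\neq j$ makes the four arcs pairwise distinct (loop-arcs such as $(i,i)$ cause no trouble, since distinctness only needs $k\neq k'$ and $i\neq j$), so the two pairs are disjoint and $A(G_1')\Delta A(G_2')=\{(k,i),(k,j),(k',i),(k',j)\}$. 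Finally, tracking each of the four arcs through the two backward shifts pins down its membership: $(k,i),(k',j)\in A(G_1')\setminus A(G_2')$ and $(k,j),(k',i)\in A(G_2')\setminus A(G_1')$, which is exactly the claimed directed alternating $4$-cycle on the vertices $i,j,k,k'$.

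The argument is essentially routine set bookkeeping; the only genuine subtlety --- and the one place where the hypotheses $G_1'\neq G_2'$ and ``$(i,j)$-transfer'' (hence $i\neq j$) are really used --- is the case distinction $k=k'$ versus $k\neq k'$ in the second direction, which simultaneously forces the two shift rows to be distinct and guarantees that the four arcs of the symmetric difference are all different.
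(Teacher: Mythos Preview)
Your proof is correct and follows essentially the same approach as the paper: both directions are handled by tracking which arcs are added/removed by the two shifts, and the ``only if'' direction hinges on observing that the two shift rows must be distinct (the paper phrases this as ``two different $(i,j)$-shifts,'' while you make the case $k=k'$ explicit). Your use of the identity $A(G_1')\Delta A(G_2')=(A(G_1')\Delta A(G))\Delta(A(G)\Delta A(G_2'))$ is a slightly cleaner bookkeeping device than the paper's more informal arc-by-arc argument, but the content is the same.
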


\begin{proof}
First we consider a loop-digraphic realization $G$ with $G \in \textnormal{Shift}_{i,j}(G_1',(a,b)) \cap \textnormal{Shift}_{i,j}(G_2',(a,b))\subset R_1(a,b).$  Then there exist two different $(i,j)$-shifts -- one in $G_1'$ and one in $G_2',$ say a shift changing arc $(k',i) \in A(G_2')$ to arc $(k',j) \notin A(G_2')$ and arc $(k,i) \in A(G_1')$ to arc $(k,j) \notin A(G_1').$ Since both $(i,j)$-shifts lead to the loop-digraph realization $G$, we can conclude for arcs in $G_1'$ and $G_2'$ that $(k',i) \notin A(G_1'),$ $(k,i) \notin A(G_2'),$ $(k',j) \in A(G_1')$ and $(k,j) \in A(G_2').$ More differences between $G_1'$ and $G_2'$ cannot exist.\\
The converse implication holds trivially. For the two loop-digraph realizations $G_1'$ and $G_2'$ we define the $(i,j)$-shifts $(k',i) \in A(G_1')$ to $(k',j) \notin A(G_1')$, and $(k,i) \in A(G_2')$ to $(k,j) \notin A(G_2').$ Clearly, we get the same new realization $G.$
\end{proof}

We call two loop-digraph realizations $G_1',G_2'$ with a symmetric difference of their arc sets like in Proposition \ref{PropositionSymmetrischeDifferenz} \emph{$(i,j)$-adjacent}, i.e.\ their symmetric difference is a directed alternating cycle of length four. We define the subset of all loop-digraph realizations $R_1(a',b)$ of $(a',b)$ possesing at least one $(i,j)$-adjacent realization by $M_{i,j}(a',b):=$
$$\{G' \in R_1(a',b)|~\textnormal{it exists a }G_2' \in R_1(a',b)\textnormal{ such that }G' \textnormal{ and }G_2' \textnormal{ are $(i,j)$-adjacent.}\}$$
Applying $(i,j)$-shifts on several lists in $M_{i,j}(a',b)$ can lead to identical realizations of $(a',b)$ which was proven in Proposition~\ref{PropositionSymmetrischeDifferenz} and makes it difficult to estimate the number of created realizations. Therefore, we consider the remaining set of $R_1(a',b)$, where applying an $(i,j)$-shift on each pair of realizations produces a different pair of realizations in $R_1(a,b)$. More formally, we get for two loop-digraph realizations $G_1',G_2' \in R_1(a',b)\setminus M_{i,j}(a',b)$ that $\textnormal{Shift}_{i,j}(G_1',(a,b)) \cap \textnormal{Shift}_{i,j}(G_2',(a,b)) = \emptyset.$ Hence, the number of loop-digraph realizations which can be constructed by shifts from elements in $R_1(S')\setminus M_{i,j}(a',b)$ is at least $|a_i'-a_j'|\cdot |R_1(a',b)\setminus M_{i,j}(a',b)|\geq 2 \cdot |R_1(a',b)\setminus M_{i,j}(a',b)|.$ 

\begin{Proposition}\label{Proposition:NumberNonAdjacentloopRealizations}
Let $(a,b)$ and $(a',b)$ be two loop-digraphic lists such that $a'$ yields $a$ by a unit $(i,j)$-transfer. Applying all possible $(i,j)$-shifts on all loop-digraphic realizations in $R_1(a',b)\setminus M_{i,j}(a',b)$ we get a subset of $R_1(a,b)$ that has at least twice the cardinality of $R_1(a',b) \setminus M_{i,j}(a',b).$ In particular, we get
\begin{eqnarray*}
\left|\bigcup_{G' \in \left(R_1(a',b)\setminus M_{i,j}(a',b)\right)}\textnormal{Shift}_{i,j}(G',(a,b))\right| & \geq & |a_i'-a_j'|\cdot |R_1(a',b)\setminus M_{i,j}(a',b)|\\
&\geq & 2 \cdot |R_1(a',b)\setminus M_{i,j}(a',b)|.
\end{eqnarray*}
\end{Proposition}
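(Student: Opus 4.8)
The plan is to realize the union $\bigcup_{G'}\textnormal{Shift}_{i,j}(G',(a,b))$ as a \emph{disjoint} union over $G'\in R_1(a',b)\setminus M_{i,j}(a',b)$ and to bound from below the number of realizations each single $G'$ contributes. First I would confirm that the union really lands inside $R_1(a,b)$: an $(i,j)$-shift flips one entry $A'_{ki}=1$ to $0$ together with $A'_{kj}=0$ to $1$, which leaves every row sum (the outdegrees $b$) unchanged, decreases the $i$th column sum by one and increases the $j$th column sum by one. Thus the new indegree list is $a'-e_i+e_j=a$, and since $A'_{kj}=0$ held beforehand no double arc is created, so $\textnormal{Shift}_{i,j}(G',(a,b))\subseteq R_1(a,b)$.

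Next I would establish the per-realization lower bound. A shift is available at every row index $k$ with $A'_{ki}=1$ and $A'_{kj}=0$. Setting $S_i=\{k : A'_{ki}=1\}$ and $S_j=\{k : A'_{kj}=1\}$ with $|S_i|=a'_i$ and $|S_j|=a'_j$, the number of admissible indices is
\[
|S_i\setminus S_j|=a'_i-|S_i\cap S_j|\ \geq\ a'_i-a'_j=|a'_i-a'_j|.
\]
Distinct choices of $k$ produce adjacency matrices differing in column $j$, hence pairwise distinct realizations of $(a,b)$, so $|\textnormal{Shift}_{i,j}(G',(a,b))|\geq |a'_i-a'_j|\geq 2$.

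The crux is the disjointness across distinct $G_1',G_2'\in R_1(a',b)\setminus M_{i,j}(a',b)$. By Proposition~\ref{PropositionSymmetrischeDifferenz}, a nonempty intersection $\textnormal{Shift}_{i,j}(G_1',(a,b))\cap\textnormal{Shift}_{i,j}(G_2',(a,b))$ forces $G_1'$ and $G_2'$ to be $(i,j)$-adjacent. But lying in the complement of $M_{i,j}(a',b)$ means by definition that $G_1'$ (and $G_2'$) has no $(i,j)$-adjacent partner in $R_1(a',b)$ whatsoever; in particular they are not $(i,j)$-adjacent to one another, so their shift sets are disjoint. Combining the three facts, the union is disjoint and
\[
\left|\bigcup_{G'}\textnormal{Shift}_{i,j}(G',(a,b))\right|=\sum_{G'}|\textnormal{Shift}_{i,j}(G',(a,b))|\geq |a'_i-a'_j|\cdot|R_1(a',b)\setminus M_{i,j}(a',b)|\geq 2\cdot|R_1(a',b)\setminus M_{i,j}(a',b)|,
\]
the last step using that an $(i,j)$-transfer requires $a'_i\geq a'_j+2$.

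I expect the main obstacle to be the disjointness step, since it rests entirely on reading the definition of $M_{i,j}(a',b)$ correctly as ``has some $(i,j)$-adjacent partner'': only then does its complement consist precisely of realizations with no such partner, which is exactly the hypothesis Proposition~\ref{PropositionSymmetrischeDifferenz} needs to rule out coinciding shifts. The set-difference inequality and the containment in $R_1(a,b)$ are then routine bookkeeping.
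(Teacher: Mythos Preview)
Your proof is correct and follows essentially the same line as the paper: the paper's argument is given informally in the paragraph preceding the proposition, where it notes that each $G'$ admits at least $|a'_i-a'_j|\ge 2$ shifts and that, by Proposition~\ref{PropositionSymmetrischeDifferenz} together with the definition of $M_{i,j}(a',b)$, the shift sets of distinct $G_1',G_2'\in R_1(a',b)\setminus M_{i,j}(a',b)$ are disjoint. Your explicit set-difference computation $|S_i\setminus S_j|\ge a'_i-a'_j$ and the verification that shifts land in $R_1(a,b)$ just spell out details the paper leaves implicit.
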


Let us consider an extremal example for such a situation where we apply one after another $(k,k+1)$-shifts for $3 \leq k \leq n-1$ on loop-digraph realizations which are not $(k,k+1)$-adjacent. Clearly, this leads to exponentially many loop-digraph realizations.

\begin{Example}\label{Example:ExponentialManyLoopdigraphRealizations}
Let us consider threshold loop-digraphic list
$$(a',b):=((n-1,0),(n-2,1),(n-3,2),\dots,(2,n-3),(1,n-2),(0,n-1))$$ possessing exactly one loop-digraph realization. The loop-digraphic Ferrers matrix of $(a',b)$ only possesses entries $0$ above and on its main diagonal. Below this diagonal its enries are $1.$ \\
First we apply an $(1,3)$-shift and get two loop-digraph realizations of list $(a^2,b)$. Then we apply one after another a $(3,4)$-shift, $(4,5)$-shift,\dots,$(n-1,n)$-shift and get realizations for lists $(a^3,b),(a^4,b),\dots,(a^{n-1},b)=:(a,b)$. We get  $(a,b)=((n-2,0),(n-2,1),\dots,(1,n-2),(1,n-1)).$ In each step we have two possibilities for a shift. On the other hand $M_{k,k+1}((a^{k-1},b))= \emptyset,$ because realizations of $R_1(a^{k-1},b)$ can only differ in the first $k$ columns but not in the $(k+1)$th column. Hence, $N_1(a,b)\geq 2^{n-2}.$
\end{Example}

Note that $(a',b)$ in our example can yield $(a,b)$ by a unit $(1,n)$-transfer. That is, only one shift would have been sufficient to achieve loop-digraphic list $(a,b).$ In this case we only have $n-1$ possible $(1,n)$-shifts and so our estimation for a lower bound of $N_1(a,b)$ is $n-1.$ Hence, the kind of transfer paths plays an important role for the estimation of the possible lower bound for the number of realizations. In particular, the situation of the existence of two different $(i,j)$-adjacent loop-digraph realizations of a list $(a^k,b)$ on a transfer path $(a',b),(a^2,b),\dots, (a^r,b)$ ($(a',b)$ is threshold list) can only appear if there were $(i,c)$-shifts and $(d,j)$-shifts on transfer subpath $(a',b),\dots,(a^{k-1},b).$ Hence, we can conclude the following result.

\begin{corollary}\label{Corollar:exponentiellerTransferPath}
Let $(a,b)$ be a loop-digraphic list and $(a',b)$ its threshold loop-digraphic list. If there exists a transfer path $(a',b),\dots,(a,b)$ of length $r$ such that we have for each pair of transfers $t_{i,j}$ and $t_{c,d},$ $i\neq c$ and $j \neq d,$ then we have $N_1(a,b)\geq 2^r.$ 
\end{corollary}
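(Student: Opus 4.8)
The plan is to induct along the transfer path and use Proposition~\ref{Proposition:NumberNonAdjacentloopRealizations} to at least double the number of realizations at every step. Write the path as $(a',b)=(a^0,b),(a^1,b),\dots,(a^r,b)=(a,b)$, where $(a^k,b)$ is obtained from $(a^{k-1},b)$ by the transfer $t_{i_k,j_k}$, so that $a^{k-1}_{i_k}\ge a^{k-1}_{j_k}+2$ and hence $|a^{k-1}_{i_k}-a^{k-1}_{j_k}|\ge 2$. Since $(a^0,b)=(a',b)$ is a threshold loop-digraphic list, it has a unique realization, which gives the base case $N_1(a^0,b)=1$. The inductive step will consist in showing that at stage $k$ there is no overcounting, i.e.\ $M_{i_k,j_k}(a^{k-1},b)=\emptyset$. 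Granting this, Proposition~\ref{Proposition:NumberNonAdjacentloopRealizations} applied to $(a^{k-1},b)$ and $(a^k,b)$ produces a subset of $R_1(a^k,b)$ of cardinality at least $2\cdot|R_1(a^{k-1},b)\setminus M_{i_k,j_k}(a^{k-1},b)|=2\,N_1(a^{k-1},b)$, so that $N_1(a^k,b)\ge 2\,N_1(a^{k-1},b)$.

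The heart of the argument is the claim $M_{i_k,j_k}(a^{k-1},b)=\emptyset$ for every $k$, and here I would lean on the observation recorded just before the statement: two distinct $(i,j)$-adjacent realizations of a list on the path can occur only when the preceding subpath already contained an $(i,c)$-shift (a transfer with source $i$) together with a $(d,j)$-shift (a transfer with target $j$). The intuition is that an $(i,j)$-alternating $4$-cycle forces genuine variation in both column $i$ and column $j$ of the realizations, and, starting from the rigid Ferrers realization of the threshold list, variation in a given column is introduced only when that column first plays the role of source, respectively target, of a transfer. The hypothesis forces the sources $i_1,\dots,i_r$ to be pairwise distinct and the targets $j_1,\dots,j_r$ to be pairwise distinct. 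Consequently $i_k\notin\{i_1,\dots,i_{k-1}\}$ and $j_k\notin\{j_1,\dots,j_{k-1}\}$, so no $(i_k,c)$-shift and no $(d,j_k)$-shift occur among the transfers $t_{i_1,j_1},\dots,t_{i_{k-1},j_{k-1}}$ that build $(a^{k-1},b)$ from the threshold list. By the observation no two realizations of $(a^{k-1},b)$ are $(i_k,j_k)$-adjacent, which is exactly $M_{i_k,j_k}(a^{k-1},b)=\emptyset$.

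Assembling the two steps, the induction yields $N_1(a^k,b)\ge 2^k$ for all $k$, and at $k=r$ this is precisely $N_1(a,b)\ge 2^r$. The bookkeeping of the doubling is routine; the genuine obstacle is the directional rigidity claim underlying the observation, namely that an $(i_k,j_k)$-alternating $4$-cycle cannot be present in $R_1(a^{k-1},b)$ when column $i_k$ has never been a source and column $j_k$ has never been a target on the subpath. For a self-contained treatment one would formalize this by tracking, from the Ferrers realization downward, how each individual shift can and cannot disturb a fixed column, and verifying that the untouched column still forbids the $(1,0)$ versus $(0,1)$ discrepancy in two rows that such a $4$-cycle requires. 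Example~\ref{Example:ExponentialManyLoopdigraphRealizations}, in which each column $k+1$ remains constant across all realizations until its own transfer is applied, is the prototype of precisely this rigidity and points to the mechanism to be made general.
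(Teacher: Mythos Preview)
Your argument is essentially the same as the paper's: the corollary is not given a separate proof there either, but is presented as an immediate consequence of the observation in the preceding paragraph (that $(i_k,j_k)$-adjacency at stage $k$ requires both a prior $(i_k,c)$-transfer and a prior $(d,j_k)$-transfer) together with Proposition~\ref{Proposition:NumberNonAdjacentloopRealizations} and the fact that the threshold list has a unique realization. Your honest remark that this ``directional rigidity'' observation is the real content and would need a self-contained justification is apt---the paper states it but does not prove it in detail, so your sketch matches the paper both in strategy and in level of rigor.
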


Note that it is possible that we have $j=c$ or $i=d$ as in Example~\ref{Example:ExponentialManyLoopdigraphRealizations}. In the next steps, we need a combinatorial insight for binomial coefficients.

\begin{Proposition}\label{Proposition:PascalsTriangle}
Let $d:=c-\ell$ where $c,d,\ell \in \mathbb{N}.$ For $\ell\geq 1$ we have ${2c-\ell \choose c-\ell+1} \geq {2c-\ell \choose c}.$ For $\ell\geq 2$ we have ${2c-\ell \choose c-\ell+1} > {2c-\ell \choose c}.$
\end{Proposition}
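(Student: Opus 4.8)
The plan is to reduce both binomial coefficients to a common form and then compare them through a single ratio; the title of the proposition already hints that the comparison is just between two neighbouring entries in one row of Pascal's triangle. The key move is the substitution $d=c-\ell$, which gives $2c-\ell=c+d$. First I would rewrite the right-hand coefficient by the symmetry $\binom{n}{k}=\binom{n}{n-k}$: since $(2c-\ell)-c=c-\ell=d$, we have $\binom{2c-\ell}{c}=\binom{c+d}{d}$. The left-hand coefficient is $\binom{2c-\ell}{c-\ell+1}=\binom{c+d}{d+1}$, because $c-\ell+1=d+1$. Thus the claim becomes a comparison of the two adjacent entries $\binom{c+d}{d}$ and $\binom{c+d}{d+1}$ in the row indexed by $c+d$.

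Next I would compute their quotient directly by cancelling factorials, obtaining $\binom{c+d}{d+1}\big/\binom{c+d}{d}=\frac{c}{d+1}=\frac{c}{c-\ell+1}$. Because the convention $c,d,\ell\in\mathbb{N}$ forces $d=c-\ell\geq 1$, the denominator $c-\ell+1\geq 2$ is positive, so the ratio is well defined and every coefficient involved is an ordinary positive integer. For $\ell\geq 1$ we have $c-\ell+1\leq c$, so the ratio is at least $1$, which yields $\binom{2c-\ell}{c-\ell+1}\geq\binom{2c-\ell}{c}$; for $\ell\geq 2$ we have $c-\ell+1\leq c-1<c$, so the ratio is strictly greater than $1$, giving the strict inequality. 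This is consistent with the boundary case $\ell=1$, where the lower indices $d=c-1$ and $c$ sum to $2c-1$ and the two coefficients coincide by symmetry.

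Equivalently, and perhaps more in the spirit of the proposition's name, one may argue by unimodality of the row: writing $\binom{2c-\ell}{c}=\binom{2c-\ell}{c-\ell}$, the indices $c-\ell$ and $c-\ell+1$ lie weakly to the left of the central index $c-\ell/2$ exactly under the hypotheses $\ell\geq 1$ and $\ell\geq 2$ respectively, and $c-\ell+1$ is the one nearer the centre, so the strictly increasing left branch of the row delivers the inequality. I expect no serious obstacle; the only point needing a little care is the bookkeeping of the $\mathbb{N}$ convention, ensuring $c-\ell+1>0$ so that the coefficients are genuinely positive and the ratio argument is legitimate rather than a vacuous comparison of zeros.
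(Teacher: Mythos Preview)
Your argument is correct and in fact cleaner than the paper's. After the substitution $2c-\ell=c+d$ and the symmetry $\binom{2c-\ell}{c}=\binom{c+d}{d}$, the comparison really is between the two adjacent entries $\binom{c+d}{d}$ and $\binom{c+d}{d+1}$, and the ratio $\frac{c}{d+1}=\frac{c}{c-\ell+1}$ settles both claims in one line. The paper instead argues by parity of $2c-\ell$: it locates the central binomial coefficient(s) in row $2c-\ell$, computes the distances $|c-(c-\ell/2)|$ and $|(c-\ell+1)-(c-\ell/2)|$ (and the analogous pair in the odd case), and then invokes the symmetric decrease away from the middle. That works, but it needs a separate odd/even case split and several absolute-value computations; your ratio computation avoids all of this. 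Your final remark about unimodality is essentially the paper's idea, stated more economically. One small wording point: you justify $c-\ell+1\geq 2$ by taking $\mathbb{N}$ to start at $1$ so that $d\geq1$; even if $d=0$ were allowed the ratio is still $c/1=c=\ell\geq1$ (strict for $\ell\geq2$), so the conclusion is unaffected.
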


\begin{proof}
We consider Pascal's triangle in row $2c-\ell.$ For an even $2c-\ell,$ we find the maximum binomial coefficient ${2c-\ell \choose c-\frac{\ell}{2}}.$ In this case $\ell$ must be even and therefore $\ell \geq 2$. Clearly, the binomial coefficient decreases symmetrically starting on the maximum middle binomial coefficient in the directions of both borders of Pascal's triangle. Since, $|c-(c-\frac{\ell}{2})|=\frac{\ell}{2}$ and $|c-\frac{\ell}{2}-(c-\ell+1)|=\frac{\ell}{2}-1$, bionomial coefficient ${2c-\ell \choose c-\ell+1}$ is nearer to the maximum bionomial coefficient than ${2c-\ell \choose c}$. Hence, ${2c-\ell \choose c-\ell+1} > {2c-\ell \choose c}.$ For an odd $2c-\ell,$ we find the two maximum binomial coefficients ${2c-\ell \choose c-\frac{1}{2}(\ell+1)}$ and ${2c-\ell \choose c-\frac{1}{2}(\ell-1)}$ in row $2c-\ell$ of Pascal's triangle. Again, the binomial coefficients decrease symmetrically starting on the two maximum middle binomial coefficients in the directions of both borders of Pascal's triangle. Since we have for $\ell\geq 3$,
  
\begin{enumerate}
\item $|c-(c-\frac{1}{2}(\ell+1))|=\frac{1}{2}(\ell+1)$,
\item $|c-\ell+1-(c-\frac{1}{2}(\ell+1))|=\frac{1}{2}(\ell-3)$
\item $|c-(c-\frac{1}{2}(\ell-1))|=\frac{1}{2}(\ell-1)$,
\item $|c-\ell+1-(c-\frac{1}{2}(\ell-1))|=\frac{1}{2}(\ell-1)$ and
\end{enumerate}

$\frac{1}{2}(\ell-3)<\frac{1}{2}(\ell-1)$ we get that ${2c-\ell \choose c-\ell+1}$ is nearer to the right maximum binomial coefficient than ${2c-\ell \choose c}$ to the left maximum binomial coefficient in Pascal's triangle. Hence, we have ${2c-\ell \choose c-\ell+1} > {2c-\ell \choose c}$ for $\ell\geq 2.$ Let us finally consider the case $\ell=1.$ We find that ${2c-\ell \choose c-\ell+1}={2c-\ell \choose c}.$ Hence, we have ${2c-\ell \choose c-\ell+1}\geq {2c-\ell \choose c}$ for $\ell\geq 1.$
\end{proof}

Let us now consider all loop-digraph realizations which are constructed by $(i,j)$-shifts on elements in $M_{i,j}(a',b).$ Then we find the following result.

\begin{Proposition}\label{Proposition:NumberAdjacentloopRealizations}
Let $(a,b)$ and $(a',b)$ be two different loop-digraphic lists such that $a'$ yields $a$ by a unit $(i,j)$-transfer and $M_{i,j}(a',b)\neq \emptyset.$ Applying all possible $(i,j)$-shifts on all elements in $M_{i,j}(a',b)\subset R_1(a',b)$ we get a subset of loop-digraph realizations $R_1(a,b)$ that has a larger cardinality than $M_{i,j}(a',b),$ i.e.\ \\
$\left|\bigcup _{G' \in  M_{i,j}(a',b)} \textnormal{Shift}_{i,j}(G',(a,b))\right| > |M_{i,j}(a',b)|.$ 
\end{Proposition}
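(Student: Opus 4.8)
The plan is to partition $R_1(a',b)$ according to the portion of an adjacency matrix that an $(i,j)$-shift cannot touch, and then to count within each block. For a realization $G'\in R_1(a',b)$ call its \emph{context} the restriction of its adjacency matrix $A'$ to all columns other than $i$ and $j$, and group the realizations of $R_1(a',b)$ by identical context. Fix one such block. Since every row sum $b_k$ is prescribed and all entries outside columns $i,j$ are fixed by the context, the number of $1$'s each row $k$ places in columns $i$ and $j$ is determined; hence each row is of a forced type ($(0,0)$ or $(1,1)$ in columns $i,j$) or is a \emph{free} row that may carry either the pattern $(1,0)$ or $(0,1)$. Writing $c$ for the number of free rows realized as $(1,0)$ and $d$ for the number realized as $(0,1)$, both are constant on the block because the column sums $a'_i,a'_j$ are fixed; indeed $c-d=a'_i-a'_j\geq 2$. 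The block thus consists of the $\binom{c+d}{c}$ ways of choosing which free rows carry $(1,0)$.

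First I would identify which blocks meet $M_{i,j}(a',b)$. By Proposition~\ref{PropositionSymmetrischeDifferenz}, two realizations are $(i,j)$-adjacent precisely when one arises from the other by exchanging a $(1,0)$ free row with a $(0,1)$ free row; such an exchange preserves the context, so $(i,j)$-adjacency occurs only inside a single block. A member of a block has an $(i,j)$-adjacent partner exactly when it has at least one $(1,0)$ row and at least one $(0,1)$ row. Since $c\geq d+2\geq 2$ on every block, this reduces to $d\geq 1$. Consequently a block either lies entirely in $M_{i,j}(a',b)$ (when $d\geq 1$) or is a singleton disjoint from it (when $d=0$), and the blocks meeting $M_{i,j}(a',b)$ are exactly those with $d\geq 1$.

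Next I would count the images. For a block with $d\geq 1$, an $(i,j)$-shift (Definition~\ref{Definition:ShiftLoopDigraph}) turns one of the $c$ free rows of type $(1,0)$ into type $(0,1)$, producing a realization of $(a,b)$ with the same context and exactly $c-1$ free rows of type $(1,0)$. Conversely every such realization is reached, since any of its $(0,1)$ free rows may be promoted back to $(1,0)$ to exhibit a preimage in the block; hence $\bigcup_{G'\in\text{block}}\textnormal{Shift}_{i,j}(G',(a,b))$ has exactly $\binom{c+d}{c-1}$ elements. Crucially, shifts never alter the context, so images arising from different blocks differ in some column outside $\{i,j\}$ and are therefore distinct. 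Summing over the blocks $B$ with $d_B\geq 1$ that constitute $M_{i,j}(a',b)$ gives
\begin{equation*}
\left|\bigcup_{G'\in M_{i,j}(a',b)}\textnormal{Shift}_{i,j}(G',(a,b))\right|=\sum_{B}\binom{c_B+d_B}{c_B-1},\qquad |M_{i,j}(a',b)|=\sum_{B}\binom{c_B+d_B}{c_B}.
\end{equation*}

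It then remains to compare the two sums term by term. For a single block, set $\ell:=c-d=a'_i-a'_j$, so that $c+d=2c-\ell$ and, by the symmetry of binomial coefficients, $\binom{c+d}{c-1}=\binom{2c-\ell}{c-\ell+1}$ while $\binom{c+d}{c}=\binom{2c-\ell}{c}$. Since $\ell=a'_i-a'_j\geq 2$, Proposition~\ref{Proposition:PascalsTriangle} gives the strict inequality $\binom{c+d}{c-1}>\binom{c+d}{c}$. Because $M_{i,j}(a',b)\neq\emptyset$ guarantees at least one summand, adding these strict inequalities proves the claim. The only genuinely delicate points are verifying that an entire block falls into $M_{i,j}(a',b)$ as soon as $d\geq 1$, and that the context is a shift-invariant forcing disjointness of the images across blocks; once these are established, the binomial bookkeeping and the appeal to Proposition~\ref{Proposition:PascalsTriangle} are routine.
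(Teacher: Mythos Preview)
Your proof is correct and follows essentially the same approach as the paper. Your ``context'' is exactly the paper's ``scenario'' $F$, your free-row counts $c,d$ match the paper's block sizes, and both arguments reduce to the same binomial comparison $\binom{c+d}{c-1}>\binom{c+d}{c}$ via Proposition~\ref{Proposition:PascalsTriangle} with $\ell=c-d\geq 2$; if anything, you are slightly more explicit about why images from distinct blocks are disjoint and why an entire block with $d\geq 1$ lies in $M_{i,j}(a',b)$.
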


\begin{proof}
Two $(i,j)$-adjacent loop-digraph realizations in $M_{i,j}(a',b)$ do only differ in the $i$th and $j$th columns. There are at least $|a_i'-a_j'|\geq 2$ and at most $a_i'$ possible $(i,j)$-shifts in each realization. More precisely, there exist $c$ $(i,j)$-shifts in each such realization with $2\leq |a_i'-a_j'|\leq c \leq a_i'.$ Hence, we can conclude for an adjacency matrix of such a realization that there exist $c\leq a_i'$ rows with entry one in column $i$ whereas the entries in the $j$ths column of these rows are zero. Consider a schematic picture where the permutation of the indices has been ignored. That means, the rows have been permuted to form four different blocks. $A':=$

\[
\begin{array}{cccccccccccccccccc}
          &           && 1 && \ldots && i             && \ldots && j             && \ldots && n                                      \\
1         & \LiKl{13} &&   &&        && \ob{1}        &&        && \ob{0}        &&        &&   & \ReKl{13} & \rdelim\}{4}{0pt}[$c$]\\
\bigdots  &           &&   &&        && \mi{1}        &&        && \mi{0}                                                            \\
          &           &&   &&        && \mi{\myvdots} &&        && \mi{\myvdots}                                                     \\
c         &           &&   &&        && \un{1}        &&        && \un{0}                                                            \\
\bigdots  &           &&   &&        && \ob{0}        &&        && \ob{1}        &&        &&   &           & \rdelim\}{3}{0pt}[$d$]\\
          &           &&   &&        && \mi{\myvdots} &&        && \mi{\myvdots}                                                     \\
c+d         &           &&   &&        && \un{0}        &&        && \un{1}                                                            \\
\bigdots  &           &&   &&        && \ob{1}        &&        && \ob{1}                                                            \\
          &           &&   &&        && \mi{\myvdots} &&        && \mi{\myvdots}                                                     \\
          &           &&   &&        && \un{1}        &&        && \un{1}                                                            \\
          &           &&   &&        && \ob{0}        &&        && \ob{0}                                                            \\
          &           &&   &&        && \mi{\myvdots} &&        && \mi{\myvdots}                                                     \\
n         &           &&   &&        && \un{0}        &&        && \un{0}                                                            \\
          &           &&   &&        && a_i'          &&        && a_j'
\end{array}
\]

Since $a'_i \geq a'_j +2,$ we find in the  $i$th column $d=c-\ell$ rows with entries "zero" ($\ell\geq 2$) whereas the entries of these rows in the $j$th column are "one". There are different possibilities for $c$ and $d,$ but for two $(i,j)$-adjacent elements in $M_{i,j}(a',b)$ these values are fixed with Proposition \ref{PropositionSymmetrischeDifferenz}. 
Note that we only consider matrices $A' \in M_{ij}(a',b)$ in a fixed scenario $F$, i.e.\ all entries that do not belong to the $i$th and $j$th column of a matrix in scenario $F$ are identical. Clearly, only two matrices in such a fixed scenario can be $(i,j)$-adjacent. All such scenarios we denote by $\mathcal{F}$. We have additionally $d\geq 1$ and $c\geq 3.$ Otherwise there is no possibility for finding an $(i,j)$-adjacency for two realizations in $M_{i,j}(a',b).$ Hence, for each fixed pair $c,d$ can either exist ${c+d \choose c}={2c-\ell \choose c}$ loop-digraph realizations of $(a',b)$ or no loop-digraph realization of $(a',b)$ in $M_{i,j}(a',b).$ This is true, because the entries in the corresponding $(c+d)$ rows in columns $i$ and $j$ can be permuted and maintain the row sums. Hence, we can divide the set $M_{i,j}(a',b)$ in disjoint subsets $m_F(c,d)$ for each fixed pair $(c,d)$ and each scenario $F$. That is 

$$M_{i,j}(a',b)=\bigcup _{\begin{matrix}
3\leq c \leq a_i'\\
1\leq d \leq  c-\ell\end{matrix}} \bigcup_{F \in \mathcal{F}}m_F(c,d).$$ 

It is sufficient to consider one arbitrary $m_F(c,d)$ which is not empty. Now, we can apply all possible $(i,j)$-shifts on each realization of $m_F(c,d).$ We get exactly ${c+d \choose d+1}={2c-l \choose c-\ell+1}$ different loop-digraph realizations of $(a,b).$ Since, ${2c-\ell \choose c-\ell+1} > {2c-\ell \choose c}$ for all non-empty $m_F(c,d)$ with $\ell\geq 2$ (Proposition~\ref{Proposition:PascalsTriangle}) our proof is done.
\end{proof}

We put the parts of all propositions together and get Theorem~\ref{CountloopDigraphRealizations}.

\begin{thm}\label{CountloopDigraphRealizations}
Let $(a,b)$ and $(a',b)$ be two different lists such that $a'$ yields $a$ by a unit $(i,j)$-transfer. Then it follows $N_1(a,b)\geq N_1(a',b).$ If $(a',b)$ is a loop-digraphic list, then we have  $N_1(a,b) > N_1(a',b).$
\end{thm}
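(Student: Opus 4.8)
The plan is to split $R_1(a',b)$ into the two pieces that the preceding propositions analyze separately, and then to argue that the $(i,j)$-shifts applied to these pieces produce \emph{disjoint} families of loop-digraph realizations of $(a,b)$, so that their counts may simply be added. Write $M := M_{i,j}(a',b)$ and consider the decomposition $R_1(a',b) = (R_1(a',b)\setminus M)\cup M$ into disjoint sets. First I would apply all $(i,j)$-shifts to the elements of $R_1(a',b)\setminus M$; by Proposition~\ref{Proposition:NumberNonAdjacentloopRealizations} these shifts yield a subset $U_1\subseteq R_1(a,b)$ with $|U_1|\geq 2\,|R_1(a',b)\setminus M|$. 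Next I would apply all $(i,j)$-shifts to the elements of $M$; provided $M\neq\emptyset$, Proposition~\ref{Proposition:NumberAdjacentloopRealizations} yields a subset $U_2\subseteq R_1(a,b)$ with $|U_2|>|M|$.

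The key step is to establish $U_1\cap U_2=\emptyset$. Suppose some realization $G$ of $(a,b)$ belonged to both, arising from an $(i,j)$-shift on $G_1'\in R_1(a',b)\setminus M$ and on $G_2'\in M$. The two sources lie in disjoint sets, so $G_1'\neq G_2'$, whence $\textnormal{Shift}_{i,j}(G_1',(a,b))\cap\textnormal{Shift}_{i,j}(G_2',(a,b))\neq\emptyset$. By Proposition~\ref{PropositionSymmetrischeDifferenz} the realizations $G_1'$ and $G_2'$ are then $(i,j)$-adjacent, so $G_1'$ possesses an $(i,j)$-adjacent partner and hence $G_1'\in M$, contradicting $G_1'\in R_1(a',b)\setminus M$. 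Thus $U_1$ and $U_2$ are disjoint subsets of $R_1(a,b)$.

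Combining the three facts gives, when $M\neq\emptyset$, the chain $N_1(a,b)=|R_1(a,b)|\geq |U_1|+|U_2|>2\,|R_1(a',b)\setminus M|+|M|\geq |R_1(a',b)\setminus M|+|M|=|R_1(a',b)|=N_1(a',b)$, where strictness comes from $|U_2|>|M|$. When $M=\emptyset$ the same reasoning reads $N_1(a,b)\geq |U_1|\geq 2\,|R_1(a',b)|$, which again strictly exceeds $N_1(a',b)$ as soon as $N_1(a',b)\geq 1$. Hence, under the hypothesis that $(a',b)$ is loop-digraphic (so $R_1(a',b)\neq\emptyset$), I obtain the strict inequality $N_1(a,b)>N_1(a',b)$. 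For the first, non-strict statement with $(a',b)$ an arbitrary list: if $(a',b)$ is not loop-digraphic then $N_1(a',b)=0\leq N_1(a,b)$ holds trivially, while if it is loop-digraphic the strict inequality just proven applies a fortiori.

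The main obstacle I expect is precisely the disjointness claim $U_1\cap U_2=\emptyset$: this is where Proposition~\ref{PropositionSymmetrischeDifferenz} does the essential work, since without it one could double-count realizations of $(a,b)$ generated from the two parts and the additive bound would break down. A secondary point demanding care is the bookkeeping of the boundary case $M=\emptyset$ together with the clean separation between the strict conclusion (valid when $(a',b)$ is loop-digraphic) and the trivial non-strict conclusion (valid otherwise).
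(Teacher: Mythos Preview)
Your proof is correct and follows the same decomposition and the same two key propositions as the paper's argument. If anything, you are a bit more careful than the paper: you make the disjointness $U_1\cap U_2=\emptyset$ explicit via Proposition~\ref{PropositionSymmetrischeDifferenz} (the paper passes over this with an unlabelled ``$=$'' in the chain of inequalities), and you separate the cases $M=\emptyset$ and $M\neq\emptyset$ to pinpoint exactly where strictness enters, whereas the paper compresses this into a single displayed chain.
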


\begin{proof}
 If $(a',b)$ is not a loop-digraphic list, then $N_1(a',b)=0$ and the inequality holds trivially. So let us consider the case that $(a',b)$ is a loop-digraphic list. Then $(a,b)$ is also a loop-digraphic list by Remark~\ref{Remark:TheoremLoopDigraphSequences}. A loop-digraph realization of $(a',b)$ is either in $M_{i,j}(a',b)$ or in $R_1(a',b)\setminus M_{i,j}(a',b)$. Now we apply for all realizations of $(a',b)$ all possible $(i,j)$-shifts. Then we get 
 
$$\bigcup_{G' \in R_1(a',b)}\textnormal{Shift}_{i,j}(G',(a,b))\subset R_1(a,b).$$
 
(Note that not all elements in $R_1(a,b)$ are necessarily achieved by such shifts.) We apply Propositions \ref{Proposition:NumberNonAdjacentloopRealizations} and \ref{Proposition:NumberAdjacentloopRealizations} and get 
\begin{eqnarray*}
N_1(a,b) &\geq &\left|\bigcup_{G' \in R_1(a',b)}\textnormal{Shift}_{i,j}(G',(a,b))\right|\\
&=&\left|\bigcup_{G' \in R_1(a',b)\setminus M_{i,j}(a',b)}\textnormal{Shift}_{i,j}(G',(a,b)))\right| + \left|\bigcup_{G' \in  M_{i,j}(S')}\textnormal{Shift}_{i,j}(G',(a,b))\right|\\
&\geq& 2|R_1(a',b)\setminus M_{i,j}(a',b)|+|M_{i,j}(a',b)|\\
&>&N_1(a',b).
\end{eqnarray*}
\end{proof}

If we now consider a transfer path between two loop-digraphic lists $(a,b)$ and $(a',b)$ where $a'$ majorizes $a,$ we can easily conclude by Theorem~ \ref{CountloopDigraphRealizations} the following general result with respect to majorization.

\begin{corollary}\label{Korollar:CountLoopDigraphRealizations}
Let $(a,b)$ and $(a',b)$ be two different loop-digraphic lists, vector $a$ is nonincreasing and $a \prec a'.$ Then $N_1(a,b) > N_1(a',b).$
\end{corollary}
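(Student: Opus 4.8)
The plan is to reduce the global majorization statement to a chain of single-transfer comparisons, each of which is governed by Theorem~\ref{CountloopDigraphRealizations}. Since $a$ is nonincreasing and $a \prec a'$, Muirhead's Theorem~\ref{MuirheadLemma} supplies a sequence of unit transfers converting $a'$ into $a$, i.e.\ integer vectors $a' = a^1, a^2, \dots, a^r = a$ with $a^{i+1} = t_{k_i,\ell_i}(a^i)$ for each $i$. Because the two lists are assumed different and share the common vector $b$, we have $a \neq a'$, hence $\kappa = \frac{1}{2}\sum_j |a'_j - a_j| \geq 1$, so the path contains at least one transfer ($r \geq 2$).

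First I would check that every intermediate pair $(a^i,b)$ along this path is itself loop-digraphic. This is exactly the content of Theorem~\ref{Theorem:LoopDigraphSequences}, together with Remark~\ref{Remark:TheoremLoopDigraphSequences}, which frees us from insisting that the intermediate vectors $a^i$ be nonincreasing: starting from the loop-digraphic list $(a',b)=(a^1,b)$ and moving by unit transfers toward the nonincreasing $a$, one stays inside the class of loop-digraphic lists. In particular each $(a^i,b)$ with $1 \le i \le r-1$ is loop-digraphic.

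Next I would apply Theorem~\ref{CountloopDigraphRealizations} to each consecutive pair. For a fixed $i$, the vector $a^i$ yields $a^{i+1}$ by the single unit $(k_i,\ell_i)$-transfer, the two lists $(a^i,b)$ and $(a^{i+1},b)$ are different, and $(a^i,b)$ is loop-digraphic; the theorem therefore delivers the strict inequality $N_1(a^{i+1},b) > N_1(a^i,b)$. Chaining these strict inequalities along the whole path gives
\[
N_1(a,b) = N_1(a^r,b) > N_1(a^{r-1},b) > \cdots > N_1(a^1,b) = N_1(a',b),
\]
which is the desired conclusion.

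The only point requiring care --- and the step I expect to be the main obstacle --- is the guarantee that all intermediate lists $(a^i,b)$ are loop-digraphic, so that the strict (rather than merely non-strict) form of Theorem~\ref{CountloopDigraphRealizations} is available at \emph{every} step; this is precisely where Theorem~\ref{Theorem:LoopDigraphSequences} is indispensable. It is equally essential that Theorem~\ref{CountloopDigraphRealizations} imposes no monotonicity hypothesis on its two lists, since the transfer path produced by Muirhead's theorem need not consist of nonincreasing vectors.
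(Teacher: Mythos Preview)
Your argument is correct and follows essentially the same strategy as the paper: build a transfer path from $(a',b)$ to $(a,b)$ via Theorem~\ref{Theorem:LoopDigraphSequences} (which guarantees every intermediate $(a^i,b)$ is loop-digraphic), then apply the strict inequality of Theorem~\ref{CountloopDigraphRealizations} at each step. The paper phrases the concatenation as an induction on the length $r$ of the path, whereas you chain the strict inequalities directly; these are the same proof, and your explicit remark that Theorem~\ref{CountloopDigraphRealizations} requires no monotonicity on the intermediate vectors is a useful clarification.
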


\begin{proof}
By Theorem~\ref{Theorem:LoopDigraphSequences} there exists at least one transfer path $(a^1,b),\dots,(a^r,b)$ where $(a^1,b)=(a',b)$,  $(a^r,b):=(a,b)$ and $a^{i+1} \prec a^{i}$. We show by induction on $r$ the correctness of the claim. For $r=2$ we apply 
Theorem~\ref{CountloopDigraphRealizations} and get $N_1(a',b)<N_1(a,b).$ We consider the transfer path $(a^2,b),\dots, (a^r,b).$ With our induction hypothesis we can conclude $N_1(a^2,b)<N_1(a^r,b).$ For $(a^1,b)$ and $(a^2,b)$ we apply again 
Theorem~\ref{CountloopDigraphRealizations}. This yields $N_1(a^1,b)<N_1(a^2,b)<N_1(a^r,b)).$
\end{proof}

Consider again Example~\ref{Example:transferPaths}. Clearly, the loop-digraphic list $(a^*,b^*):=$\\
$((2,2),(2,2),(1,1),(1,1))$ possesses the most loop-digraph realizations in the set of all lists with $4$ pairs and $m=6.$ It is not the only loop-digraphic list with this property. If we consider lists $(a^*,b^*_{\tau})$, i.e.\ we permute $b^*$ with an arbitrary permutation $\tau,$ we get a loop-digraphic list that possesses the same number of realizations. This can easily be seen if we consider an equivalent formulation of the loop-digraph realization problem, namely the  bipartite realization problem. In each bipartite realization $G^*$ of list $(a^*,b^*)$ we have only to permute the indices of the vertices $v_i$ in the second independent vertex set corresponding to the current permutation $\tau$ of $b^*.$ Hence, the number of bipartite realizations is identical for each such permutation.

\begin{Proposition}\label{PropositionOrderingOfTuplesLoopDigraphSequences}
Let $(a,b)$ be a loop-digraphic list with nonincreasing $a$ and $(a,b_{\sigma})$ a list where vector $b$ was permuted by permutation $\sigma:\{1,\dots,n\}\mapsto \{1,\dots,n\}.$ Then $(a,b_{\sigma})$ is a loop-digraphic list and the number of loop-digraph realizations of $(a,b)$ and $(a,b_{\sigma})$ is identical.
\end{Proposition}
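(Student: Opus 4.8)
The plan is to pass to the matrix (equivalently, bipartite) formulation of the loop-digraph realization problem described in the introduction, and then to exhibit an explicit bijection between the two realization sets that is simply a permutation of rows.

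First I would recall that a loop-digraph realization of $(a,b)$ is exactly an $n\times n$ matrix $A$ with entries in $\{0,1\}$ whose $i$th row sum equals the outdegree $b_i$ and whose $j$th column sum equals the indegree $a_j$; here $A_{ij}=1$ encodes the arc $(v_i,v_j)$, and a diagonal entry $A_{ii}=1$ encodes a loop at $v_i$. The point that makes this proposition clean is that, since at most one loop per vertex is allowed and no entry is otherwise forbidden, the diagonal carries no special constraint. Thus $R_1(a,b)$ is in bijection with the set of all $(0,1)$-matrices having row sums $b$ and column sums $a$, with no restriction whatsoever on the diagonal.

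Next I would define the map $\Phi\colon R_1(a,b)\to R_1(a,b_{\sigma})$ sending a matrix $A$ to the matrix $B$ obtained by permuting its rows via $\sigma$, that is $B_{ij}:=A_{\sigma(i),j}$. A one-line verification shows $\Phi$ is well-defined: the $i$th row sum of $B$ is $\sum_j A_{\sigma(i),j}=b_{\sigma(i)}$, the $i$th entry of $b_{\sigma}$, while the $j$th column sum of $B$ is $\sum_i A_{\sigma(i),j}=\sum_{i'}A_{i',j}=a_j$, since $\sigma$ is a bijection of the index set. Hence $B$ is a realization of $(a,b_{\sigma})$. The analogous map built from $\sigma^{-1}$ is a two-sided inverse, so $\Phi$ is a bijection.

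Finally, the existence of this bijection immediately yields both assertions: $R_1(a,b_{\sigma})$ is nonempty whenever $R_1(a,b)$ is, so $(a,b_{\sigma})$ is loop-digraphic, and $N_1(a,b)=|R_1(a,b)|=|R_1(a,b_{\sigma})|=N_1(a,b_{\sigma})$. I do not expect any genuine obstacle; the only subtlety worth stating explicitly is that an arbitrary row permutation preserves the realization conditions precisely because loops are permitted, so moving diagonal entries off the diagonal is harmless. This is exactly the feature distinguishing the loop-digraph case from the ordinary digraph case, where the forbidden diagonal would in general obstruct such a permutation.
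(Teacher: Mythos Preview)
Your proof is correct and follows essentially the same approach as the paper: both establish a bijection between $R_1(a,b)$ and $R_1(a,b_{\sigma})$ by permuting the rows of the adjacency matrix, which preserves column sums and reindexes row sums according to $\sigma$. Your write-up is in fact more explicit than the paper's, since you verify the row and column sums directly and spell out why the absence of a diagonal constraint is what makes the row permutation harmless in the loop-digraph case.
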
 

\begin{proof}
We define an $(n \times n)$-\emph{permutation matrix} $P_{\sigma}$ with 
$P_{i\sigma(j)}:=\begin{cases}
1& \textnormal{if } i=j\\
0& \textnormal{if } i \neq j\\
\end{cases}.$
We consider for each loop-digraph realization of $(a,b)$ or $(a,b_{\sigma})$ the corresponding adjacency matrix $A$ or $A_{\sigma},$ respectively. Furthermore, we define a function $b:R_1(a,b_{\sigma})\mapsto R_1(a,b)$ with $b(A_{\sigma}):=P_{\tau}A_{\sigma}.$ Since, $b$ permutes the rows of $A_{\sigma}$ (and therefore the `outdegrees') we get the adjacency matrix of a loop-digraph realization of $(a,b).$ Since $b$ is bijective, the number of realizations of $(a,b)$ and $(a,b_{\sigma})$ is identical.
\end{proof}

For the digraph realization problem this statement is not true. We consider the details in the next subsections.
In a last step of this subsection we consider a special type of lists. First we define an integer list
$\alpha:=(\alpha_1,\dots,\alpha_n)$ for a constant $m\in \mathbb{N}$ with $m\leq n^2$ by
$$\alpha_i:=\begin{cases}
\lfloor \frac{m}{n}\rfloor +1& \textnormal{for } i \in \{1,\dots,m \bmod(n)\}\\
\lfloor \frac{m}{n}\rfloor& \textnormal{for } i \in \{m \bmod(n)+1,\dots,n\}\\
\end{cases}$$

Clearly, $\sum_{i=1}^{n}\alpha_i=m.$ If $n$ divides $m,$ then we have $\alpha_i=\frac{m}{n}$ for $1 \leq i \leq n.$ 

\begin{Definition}
Let $\sigma: \{1,\dots,|V|\} \mapsto \{1,\dots,|V|\}$ be an arbitrary permutation and $\alpha_{\sigma}$ a permutation of integer list $\alpha.$ We call a list $(\alpha,\alpha_{\sigma})$ \emph{minconvex list}. 
\end{Definition}

In our next theorem we show that each nonincreasing integer list $a$ with $\sum_{i=1}^{n}a_i=m$ majorizes $\alpha.$ With 
Theorem~\ref{Theorem:Hardy} by Polya, Hardy and Littlewood this implies that $\sum_{i=1}^{n}g(a_i) \geq \sum_{i}^{n}g(\alpha_i)$ for all integer lists $a$, where $g:\mathbb{Z}\mapsto \mathbb{Z}$ denotes an arbitrary convex function. This is the intuition behind the notion minconvex list.

\begin{thm}\label{Theorem:MinconvexSequence}
Let $a$ be a nonincreasing integer list with $\sum_{i=1}^{n}a_i=m.$ Then we have $\alpha \prec a.$
\end{thm}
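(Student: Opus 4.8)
The plan is to show directly that the constant-ish list $\alpha$ is majorized by every nonincreasing integer list $a$ with the same total sum $m$. Since both lists have identical total sum by assumption, the only thing to verify is the sequence of partial-sum inequalities $\sum_{i=1}^{k}\alpha_i \le \sum_{i=1}^{k}a_i$ for each $k \in \{1,\dots,n-1\}$. First I would fix notation: write $q := \lfloor m/n\rfloor$ and $r := m \bmod n$, so that $\alpha_i = q+1$ for $i \le r$ and $\alpha_i = q$ for $i > r$. Then the partial sum of $\alpha$ has the closed form
\begin{equation*}
\sum_{i=1}^{k}\alpha_i = kq + \min(k,r),
\end{equation*}
which I would record explicitly because it drives the whole comparison.

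The key step is to bound $\sum_{i=1}^{k}a_i$ from below. Since $a$ is nonincreasing, each of its first $k$ entries is at least as large as the average of the remaining entries; more usefully, I would argue that the partial sum of any nonincreasing list is at least the ``flattest'' possible value. Concretely, because $a_1 \ge a_2 \ge \dots \ge a_n$ and the entries are integers summing to $m$, the first $k$ entries must carry at least their proportional share: I would show $\sum_{i=1}^{k}a_i \ge \lceil km/n\rceil$, using the fact that if the first $k$ entries summed to less than $km/n$, the average of the first $k$ would be below the overall average $m/n$, forcing some later entry to exceed an earlier one and contradicting monotonicity. The clean way to package this is to observe $k \cdot a_k \le \sum_{i=1}^k a_i$ and $(n-k)\,a_k \ge \sum_{i=k+1}^n a_i = m - \sum_{i=1}^k a_i$ (from $a_k \ge a_i$ for $i>k$), and combine these two inequalities to isolate $\sum_{i=1}^k a_i \ge km/n$, hence $\sum_{i=1}^k a_i \ge \lceil km/n\rceil$ since the left side is an integer.

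It then remains to check $\lceil km/n\rceil \ge \sum_{i=1}^{k}\alpha_i = kq + \min(k,r)$. Writing $m = nq + r$ gives $km/n = kq + kr/n$, so $\lceil km/n\rceil = kq + \lceil kr/n\rceil$, and the required inequality reduces to the purely arithmetic claim $\lceil kr/n\rceil \ge \min(k,r)$. I would verify this by splitting on the two branches of the minimum: when $k \le r$ one has $kr/n \ge k\cdot k/n$ is not quite the right comparison, so instead I would note $\lceil kr/n\rceil \ge \min(k,r)$ follows from $kr \ge n\cdot\min(k,r) - (n-1)$ after handling the ceiling, or more transparently by observing that $\min(k,r) \le \lceil kr/n\rceil$ because $kr \ge \min(k,r)\cdot\max(k,r) \ge \min(k,r)\cdot$ (something $\le n$) — this small lemma is where I expect to spend the most care.

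The main obstacle, then, is not the structure of the argument (which is a clean two-sided monotonicity bound plus a partial-sum identity for $\alpha$) but the final integer/ceiling bookkeeping comparing $\lceil kr/n\rceil$ with $\min(k,r)$; the floor/ceiling interplay with the two cases $k \le r$ and $k > r$ is the one spot where an off-by-one slip is easy. I would therefore treat that arithmetic lemma as the crux and verify it carefully in both cases, after which the majorization $\alpha \prec a$ follows immediately from Definition~\ref{Def:Majorization}.
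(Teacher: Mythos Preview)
Your approach has a genuine gap: the arithmetic lemma you identify as the crux, namely $\lceil kr/n\rceil \ge \min(k,r)$, is simply false. Take $n=10$, $r=3$, $k=2$: then $\lceil kr/n\rceil = \lceil 6/10\rceil = 1$ while $\min(k,r)=2$. Equivalently, with $m=3$ and $\alpha=(1,1,1,0,\dots,0)$ you have $\sum_{i=1}^{2}\alpha_i = 2$ but $\lceil 2m/n\rceil = 1$, so the chain $\sum_{i=1}^{k}a_i \ge \lceil km/n\rceil \ge \sum_{i=1}^{k}\alpha_i$ breaks at the second inequality. The averaging bound $\sum_{i=1}^{k}a_i \ge km/n$ that you derive is correct for any nonincreasing \emph{real} sequence, but it is strictly weaker than what you need: the sharp integer lower bound for $\sum_{i=1}^{k}a_i$ over nonincreasing integer lists with sum $m$ is exactly $\sum_{i=1}^{k}\alpha_i$, which is the content of the theorem itself. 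Routing through $\lceil km/n\rceil$ therefore loses precisely the information you require.

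The paper avoids this by a short minimal-counterexample argument instead of an explicit lower bound: suppose some partial-sum inequality fails and let $k_0$ be the first index where $\sum_{i\le k_0}a_i < \sum_{i\le k_0}\alpha_i$. Minimality forces $a_{k_0}<\alpha_{k_0}$, and since $a$ is nonincreasing and the entries are integers this gives $a_i \le \alpha_{k_0}-1 \le \lfloor m/n\rfloor \le \alpha_i$ for every $i\ge k_0$; summing all $n$ terms then yields $\sum_i a_i < \sum_i \alpha_i = m$, a contradiction. This sidesteps any ceiling bookkeeping entirely.
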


\begin{proof}
Assume there exists $k <n$ with $\sum_{i=1}^{k}a_i < \sum_{i=1}^{k} \alpha_i.$ Let $k_0$ be the smallest of such $k$s. Since $\sum_{i=1}^{k_0-1}a_i \geq \sum_{i=1}^{k_0-1} \alpha_i$ for $k_0 >1$ it follows $\alpha_{k_{0}}>a_{k_{0}}\geq a_{k_{0}+1}\geq \dots \geq a_{n}.$ Then we get $\sum_{i=1}^{n}a_i <m$ in contradiction to our assumption.
\end{proof}

Now, we are able to prove a general result for minconvex lists.

\begin{corollary}\label{Korollar:NumberRealizationsMinconvexLoop}
Let $(a,b)$ be a loop-digraphic list with nonincreasing $a$ and $\sum_{i=1}^{n}a_i=m,$ which is not a minconvex list. Then we find for an arbitrary minconvex list $(\alpha,\alpha_{\sigma})$ that $N_1(\alpha,\alpha_{\sigma})>N_1(a,b).$
\end{corollary}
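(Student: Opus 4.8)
The plan is to combine the two tools just proved: the majorization result of Theorem~\ref{Theorem:MinconvexSequence}, which gives $\alpha \prec a$ for every nonincreasing list $a$ with the same sum $m$, and the counting result of Corollary~\ref{Korollar:CountLoopDigraphRealizations}, which says that strict majorization strictly increases the number of realizations. First I would fix an arbitrary minconvex list $(\alpha,\alpha_\sigma)$ and an arbitrary loop-digraphic list $(a,b)$ with nonincreasing $a$, $\sum_i a_i = m$, that is not itself minconvex. By Theorem~\ref{Theorem:MinconvexSequence} we have $\alpha \prec a$. Since $(a,b)$ is not minconvex, the lists $\alpha$ and $a$ are genuinely different vectors, so the majorization is strict in the sense that $\alpha \neq a$; this distinctness is exactly what is needed to get the \emph{strict} inequality from Corollary~\ref{Korollar:CountLoopDigraphRealizations} rather than merely $\geq$.

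Next I would address the mismatch in the second coordinate. Corollary~\ref{Korollar:CountLoopDigraphRealizations} compares two lists sharing the \emph{same} $b$-vector, namely $(a,b)$ and $(a',b)$, whereas here I want to compare $(a,b)$ with $(\alpha,\alpha_\sigma)$, whose outdegree vectors differ. The bridge is Proposition~\ref{PropositionOrderingOfTuplesLoopDigraphSequences}: permuting the $b$-vector of a loop-digraphic list leaves the number of realizations unchanged. Concretely, I would argue that $N_1(\alpha,\alpha_\sigma) = N_1(\alpha,\alpha)$, so it suffices to compare against the ``fully sorted'' minconvex list $(\alpha,\alpha)$ with both coordinates equal to $\alpha$. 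The remaining point is that $(\alpha,b)$, obtained by attaching the given $b$ to the minconvex indegree vector $\alpha$, is itself a loop-digraphic list, so that Corollary~\ref{Korollar:CountLoopDigraphRealizations} applies to the pair $(\alpha,b)$ and $(a,b)$.

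Putting the pieces together, the chain of reasoning I would write is: since $(a,b)$ is loop-digraphic with $\alpha \prec a$ and $\alpha \neq a$, Corollary~\ref{Korollar:CountLoopDigraphRealizations} gives $N_1(\alpha,b) > N_1(a,b)$; then by the permutation invariance of Proposition~\ref{PropositionOrderingOfTuplesLoopDigraphSequences} the value $N_1(\alpha,b)$ equals $N_1(\alpha,\alpha_\sigma)$, yielding $N_1(\alpha,\alpha_\sigma) > N_1(a,b)$ as claimed.

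The main obstacle I anticipate is the bookkeeping around which $b$-vectors are being used and verifying that the intermediate list $(\alpha,b)$ is legitimately loop-digraphic so that the corollary is applicable; Corollary~\ref{Korollar:CountLoopDigraphRealizations} requires \emph{both} $(a,b)$ and the majorizing list $(\alpha,b)$ to be loop-digraphic, and this needs to be justified (it follows because $(\alpha,b)$ is majorized by the loop-digraphic list $(a,b)$ via Theorem~\ref{Theorem:LoopDigraphSequences}, transferring loop-digraphicity downward). The majorization and counting inequalities themselves are already fully packaged by the earlier results, so once the coordinate-matching is handled cleanly the proof reduces to a short composition of three cited statements with no new combinatorial content.
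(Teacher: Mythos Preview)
Your argument has a genuine gap: it handles only the first coordinate and never transforms the second. Proposition~\ref{PropositionOrderingOfTuplesLoopDigraphSequences} says that \emph{permuting} the outdegree vector leaves $N_1$ invariant, but in your final step you write $N_1(\alpha,b)=N_1(\alpha,\alpha_\sigma)$, which would require $b$ to be a permutation of $\alpha$. In general it is not: take for instance $n=3$, $m=6$, and $(a,b)=((2,3),(2,2),(2,1))$. Here $\alpha=(2,2,2)$ so $a=\alpha$ already, yet $b=(3,2,1)$ is no permutation of $\alpha$; the list $(a,b)$ is not minconvex, but your chain collapses to the triviality $N_1(\alpha,b)=N_1(a,b)$ and the claimed equality $N_1(\alpha,b)=N_1(\alpha,\alpha)$ is simply false.

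This same example also breaks your earlier sentence ``since $(a,b)$ is not minconvex, the lists $\alpha$ and $a$ are genuinely different vectors'': failure to be minconvex can come entirely from the $b$-side. The paper's proof avoids both problems by running the majorization argument \emph{twice}: first replace $a$ by $\alpha$ via Corollary~\ref{Korollar:CountLoopDigraphRealizations}, then relabel so that $b$ becomes nonincreasing, swap the roles of the two coordinates (using that transposing the adjacency matrix gives a bijection, so $N_1(x,y)=N_1(y,x)$), and apply Corollary~\ref{Korollar:CountLoopDigraphRealizations} again to replace the sorted $b$ by $\alpha$. Since $(a,b)$ is not minconvex, at least one of the two majorizations is strict, which is where the strict inequality comes from. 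You need this second pass; there is no shortcut via permutation invariance alone.
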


\begin{proof}
We transform list $(a,b)$ into the minconvex list $(\alpha,\alpha_{\tau})$ by the following series of lists; $(a,b),(\alpha,b),(\alpha_{\tau},b_{\tau}),(\alpha_{\tau},\alpha),(\alpha,\alpha_{\tau})$ where $\tau$ is a permutation such that $b_{\tau}$ is nonincreasing. Since by Theorem~\ref{Theorem:MinconvexSequence} $\alpha \prec a$ and $\alpha \prec b_{\tau}$ and $\alpha$ is nonincreasing by definition we get by Corollary~\ref{CountloopDigraphRealizations} that $N_1(a,b)< N_1(\alpha,b)$ and $N_1(\alpha_{\tau},b_{\tau})< N_1(\alpha_{\tau},\alpha)$ if we switch the components in the last pair of lists. Clearly, $N_1(\alpha_{\tau},b_{\tau})= N_1(\alpha,b)$ and $N_1(\alpha_{\tau},\alpha)= N_1(\alpha,\alpha_{\tau})$. In summery and if we take into account that at least one step from $(a,b)$ to $(\alpha,b)$ or from $(\alpha_{\tau},b_{\tau})$ to $(\alpha_{\tau},\alpha)$ is necessary when $(a,b)$ is not a minconvex list, we obtain $N_1(\alpha,\alpha_{\tau})> N_1(a,b).$ By Proposition~\ref{PropositionOrderingOfTuplesLoopDigraphSequences} we find that $N_1(\alpha,\alpha_{\tau})=N_1(\alpha,\alpha_{\sigma})$ and the proof is done.
\end{proof}

\subsection{The number of digraph realizations and majorization}

In this subsection we try to follow the proofs of the last subsection with respect to the digraph realization problem. We start with an example showing that we cannot apply all approaches analogously. It turns out that we have to modify the main result in Corollary \ref{Korollar:CountLoopDigraphRealizations}. In particular, it is necessary to sort a digraphic list $(a,b)$ in lexicographically nonincreasing order. We take the notions of $R_2(a,b)$ as the set of all digraph realizations for digraphic list $(a,b)$ and $N_2(a,b):=|R_2(a,b)|$. Furthermore, we define $(i,j)$-shifts. 

\begin{Definition}\label{Definition:ShiftDigraph}
We call an operation on the $(n \times n)$-adjacency matrix $A'$ of digraph realization $G'\in R_2(a',b)$ that switches the entries of $A'_{ki}=1$ and $A'_{kj}=0$, where $k\neq j$, an \emph{$(i,j)$-shift on $G'$}. We denote the subset of digraph realizations of $(a,b)$ which are constructed by $(i,j)$-shifts from a digraph realization $G'$ by $Shift_{ij}(G',(a,b)).$
\end{Definition} 

Let us start with an example to discuss some problems leading to the above mentioned restriction.

\begin{Example}\label{Example:DigraphCounterExample}
First we apply the unique $(3,4)$-shift on the adjacency matrix $A':=\left(\begin{matrix}
0&0&0&0\\
1&0&1&0\\
0&1&0&0\\
1&1&1&0\\
\end{matrix}\right)$ of the digraph threshold list $(a',b)=((2,0),(2,2),(2,1),(0,3))$ leading to the adjacency matrix $A=\left(\begin{matrix}
0&0&0&0\\
1&0&0&1\\
0&1&0&0\\
1&1&1&0\\
\end{matrix}\right)$ of another threshold list $(a,b)=((2,0),(2,2),(1,1),(1,3))$. It follows $N_2(a',b)=N_2(a,b)=1.$ On the other hand we have $a \prec a'.$ Hence, a strict analogous result of Theorem~\ref{CountloopDigraphRealizations} for digraphic lists is not possible.
\end{Example}

The reason for this observation is that it is only possible to apply a unique $(i,j)$-shift in contrast to 
Proposition~\ref{Proposition:NumberNonAdjacentloopRealizations} where always at least two shifts are possible. On the other hand it turns out (see Theorem~\ref{Theorem:CountDigraphRealizationsLexicographic}), that this result is true if digraphic list $(a,b)$ is lexicographically nonincreasing. However, we cannot transfer all ideas from the proofs of the loop-digraph realization problem. To see this, consider the following example.

\begin{Example}\label{Example:Digraphlexicographical}
Consider the adjacency matrix $A'=\left(\begin{matrix}
0&1&0\\
1&0&0\\
1&0&0\\
\end{matrix}\right)$ of threshold list $(a',b):=((2,1),(1,1),(0,1)).$ We apply the unique $(1,3)$-shift and get $A=\left(\begin{matrix}
0&1&0\\
0&0&1\\
1&0&0\\
\end{matrix}\right).$ Note that the digraphic list $(a,b)=((1,1),(1,1),(1,1))$ of $A$ is lexicographically nonincreasing. On the other hand there exists another different digraph realization $G^*$ of $(a,b)$, i.e. $A^*=\left(\begin{matrix}
0&0&1\\
1&0&0\\
0&1&0\\
\end{matrix}\right).$ Hence, we have $N_2(a,b)>N_2(a',b)$ but $A^*$ cannot be achieved by a shift.
\end{Example}

It is easy to transfer the concept of $(i,j)$-adjacent loop-digraph realizations $G_1',G_2'\in R_2(a',b)$ -- with some little restrictions to avoid loops -- to digraph realizations.

\begin{Proposition}\label{PropositionSymmetrischeDifferenzdigraphRealization}
Let $(a,b)$ and $(a',b)$ be two different digraphic lists such that $a'$ yields $a$ by a unit $(i,j)$-transfer. Furthermore, we assume that $G_1'$ and $G_2'$ are digraph realizations of $(a',b)$ with $G_1'\neq G_2'$. $\textnormal{Shift}(G_1',(a,b)) \cap \textnormal{Shift}(G_2',(a,b)) \neq \emptyset$ if and only if we have for the symmetric difference of the arc sets, $A(G_1')\Delta A(G_2')=\{(k,i),(k',j),$ $(k,j),(k',i)\}$ with $(k,i),(k',j) \in A(G_1')\setminus A(G_2')$ and $(k,j),(k',i) \in A(G_2')\setminus A(G_1')$ where $k \notin \{k',i,j\}$ and $k' \notin \{k,i,j\}.$
\end{Proposition}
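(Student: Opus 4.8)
The plan is to mirror the proof of Proposition~\ref{PropositionSymmetrischeDifferenz} exactly, but to carry the extra bookkeeping that prevents loops. The key structural fact is that an $(i,j)$-shift on a digraph realization is exactly a switch of a single entry $A'_{ki}=1$ to $A'_{kj}=0$ in some row $k$, subject now to the constraint $k\neq j$ (so that we do not create a loop at $j$). Thus the content of the statement is purely local: two distinct realizations of $(a',b)$ can be shifted into a common realization of $(a,b)$ if and only if their adjacency matrices differ only in columns $i$ and $j$, in precisely two rows, forming an alternating $4$-cycle, and moreover these two rows are indexed by vertices $k,k'$ that avoid $\{i,j\}$ (so the shifts involved are legal, loop-free shifts).

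For the forward implication I would argue as follows. Suppose $G\in \textnormal{Shift}(G_1',(a,b))\cap \textnormal{Shift}(G_2',(a,b))$. Then $G$ arises from $G_1'$ by switching $(k,i)\in A(G_1')$ to $(k,j)$, and from $G_2'$ by switching $(k',i)\in A(G_2')$ to $(k',j)$, where by Definition~\ref{Definition:ShiftDigraph} both shifts are legal, forcing $k\neq j$ and $k'\neq j$. If $G_1'\neq G_2'$ then necessarily $k\neq k'$, for otherwise the two shifts are identical and would give $G_1'=G_2'$. Since both $G_1'$ and $G_2'$ yield the same $G$, I would read off, exactly as in Proposition~\ref{PropositionSymmetrischeDifferenz}, that outside rows $k,k'$ the matrices of $G_1'$ and $G_2'$ coincide with that of $G$ and hence with each other, and that in rows $k,k'$ they must disagree in the pattern $(k,i),(k',j)\in A(G_1')\setminus A(G_2')$ and $(k,j),(k',i)\in A(G_2')\setminus A(G_1')$. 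It then remains to verify the loop-avoidance constraints $k\notin\{k',i,j\}$ and $k'\notin\{k,i,j\}$. Here $k\neq k'$ is already shown, $k\neq j$ and $k'\neq j$ come from the shift legality, and $k\neq i$, $k'\neq i$ follow because $(k,i)\in A(G_1')$ and $(k',i)\in A(G_2')$ are genuine arcs of a loopless digraph, so their tails cannot equal their common head $i$. The converse is again routine: given two realizations with exactly this symmetric difference, the shift taking $(k,i)\in A(G_1')$ to $(k,j)$ and the shift taking $(k',i)\in A(G_2')$ to $(k',j)$ are both legal (the constraints guarantee no loop is created) and manifestly produce the same realization $G$.

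The main obstacle, compared with the loop-digraph case, is precisely the loop-avoidance bookkeeping: I must confirm that each of the four conditions $k,k'\notin\{i,j\}$ and $k\neq k'$ is forced, rather than merely assumed. The subtle point is that in the loop-digraph setting a shift could legitimately involve $k=j$ (a loop), so the alternating cycle there was allowed to touch the diagonal; in the digraph setting Definition~\ref{Definition:ShiftDigraph} rules this out, and I must check that this single extra hypothesis $k\neq j$ on each shift, together with the loopless structure of $G_1',G_2'$, is enough to pin down all four exclusions. Once this is in place the combinatorial skeleton is identical to Proposition~\ref{PropositionSymmetrischeDifferenz}, so I would keep the write-up short and refer back to that proof for the portions that transfer verbatim.
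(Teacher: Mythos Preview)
Your proposal is correct and follows precisely the route the paper indicates: the paper's own ``proof'' of this proposition consists of the single sentence that it can be done as in Proposition~\ref{PropositionSymmetrischeDifferenz}, and your write-up does exactly that, supplying in addition the loop-avoidance bookkeeping ($k\neq i$ from looplessness of $G_1'$, $k'\neq i$ from looplessness of $G_2'$, $k,k'\neq j$ from the shift definition, $k\neq k'$ from $G_1'\neq G_2'$) that the paper leaves implicit. Your converse direction is also cleaner than the paper's sketch for the loop-digraph case, which appears to contain a minor index mix-up.
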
 

The proof can be done as in Proposition~\ref{PropositionSymmetrischeDifferenz}. We call two digraph realizations $G_1',G_2'$ such that one yields the other by a swap \emph{$(i,j)$-adjacent}, i.e.\ their symmetric difference is as in Proposition~\ref{PropositionSymmetrischeDifferenzdigraphRealization}. Furthermore, we also use the notion of $M_{i,j}(a',b)$ as the set of all digraph realizations in $R_2(a',b)$ which possess at least one $(i,j)$-adjacent digraph realization.
We prove the claim of this subsection in two steps. First we transfer the proofs of the last chapter showing the results in a restricted case. This approach is analogous to the loop-digraph realization problem. In a second step we prove the results for the stronger case. Here we have to change some approaches. Lastly, we consider minconvex lists for digraphic lists. In this case we again have to modify our results with respect to loop-digraphic lists. It turns out, that minconvex lists $(\alpha,\alpha_{\tau})$ possess the largest number of digraph realizations if $\alpha$ is nonincreasing and $\alpha_{\tau})$ nondecreasing.

\begin{Proposition}\label{Proposition:NumberNonAdjacentRealizations}
Let $(a,b)$ and $(a',b)$ be two different digraphic lists such that $a'$ yields $a$ by a unit $(i,j)$-transfer. Applying all possible $(i,j)$-shifts on all elements in $R_2(a',b)\setminus M_{i,j}(a',b)$ we get a subset of $R_2(a,b)$ which is larger or equals the cardinality of $R_2(a',b) \setminus M_{i,j}(a',b).$ In particular,
\begin{eqnarray*}
\left|\bigcup_{G' \in (R_2(a',b)\setminus M)}\textnormal{Shift}_{i,j}(G',(a,b))\right| & \geq & (|a_i'-a_j'|-1)\cdot |R_2(a',b)\setminus M_{i,j}(a',b)|\\
& \geq & |R_2(a',b)\setminus M_{i,j}(a',b)|.
\end{eqnarray*} 
$$$$
\end{Proposition}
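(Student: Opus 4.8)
The plan is to transcribe the loop-digraph argument of Proposition~\ref{Proposition:NumberNonAdjacentloopRealizations}, carefully tracking the single shift that the prohibition of loops destroys. First I would check that every $(i,j)$-shift really lands in $R_2(a,b)$: switching $A'_{ki}=1$, $A'_{kj}=0$ to $A'_{ki}=0$, $A'_{kj}=1$ preserves every row sum (outdegree), lowers the $i$th column sum by one and raises the $j$th column sum by one, so the indegree list passes from $a'$ to $a'-e_i+e_j=a$. Since the diagonal of any digraph realization vanishes we automatically have $k\neq i$, and the clause $k\neq j$ in Definition~\ref{Definition:ShiftDigraph} prevents the creation of a loop $(j,j)$; hence the outcome is a bona fide realization of $(a,b)$.

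Next I would count the admissible shifts in a fixed $G'$. Let $c$ be the number of rows $k$ with $A'_{ki}=1$ and $A'_{kj}=0$, and let $p$ be the number of rows with $A'_{ki}=A'_{kj}=1$; reading off the $i$th column gives $c+p=a'_i$, while $p\le a'_j$, so that $c\ge a'_i-a'_j=|a'_i-a'_j|$ (recall $a'_i\ge a'_j+2$). Among these $c$ rows at most one is forbidden, namely $k=j$, which occurs precisely when $A'_{ji}=1$. Every remaining row produces an admissible shift, and distinct rows yield matrices differing in those rows, hence distinct realizations. Thus $G'$ admits at least $|a'_i-a'_j|-1\ge 1$ admissible shifts, all landing on pairwise different elements of $R_2(a,b)$.

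Finally I would glue the counts together using disjointness across realizations. If $G_1',G_2'\in R_2(a',b)\setminus M_{i,j}(a',b)$ are distinct, then they cannot be $(i,j)$-adjacent, for otherwise each would possess an $(i,j)$-adjacent partner and would lie in $M_{i,j}(a',b)$; Proposition~\ref{PropositionSymmetrischeDifferenzdigraphRealization} then forces $\textnormal{Shift}_{i,j}(G_1',(a,b))\cap\textnormal{Shift}_{i,j}(G_2',(a,b))=\emptyset$. Hence the shift-sets attached to the realizations in $R_2(a',b)\setminus M_{i,j}(a',b)$ are pairwise disjoint and each has at least $|a'_i-a'_j|-1$ elements, which yields the claimed lower bound $(|a'_i-a'_j|-1)\cdot|R_2(a',b)\setminus M_{i,j}(a',b)|$; the weaker bound follows from $|a'_i-a'_j|\ge 2$.

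The only genuine obstacle, and the sole place where the digraph case diverges from the loop-digraph case, is the accounting of this one lost shift: in the loop setting all $|a'_i-a'_j|\ge 2$ shifts survive, whereas here the row $k=j$ must be discarded, dropping the multiplicity from $|a'_i-a'_j|$ to $|a'_i-a'_j|-1$. Example~\ref{Example:DigraphCounterExample}, where $|a'_i-a'_j|=2$ and the unique surviving shift merely reproduces another threshold list, shows this loss is real and foreshadows why the strict inequality of Theorem~\ref{CountloopDigraphRealizations} can only be regained through the lexicographic refinement promised in Theorem~\ref{Theorem:CountDigraphRealizationsLexicographic}.
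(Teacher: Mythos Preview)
Your proof is correct and follows essentially the same line as the paper's: count at least $|a'_i-a'_j|$ rows $k$ with $A'_{ki}=1$, $A'_{kj}=0$, discard at most the single row $k=j$, and use $a'_i\ge a'_j+2$. You are simply more explicit than the paper, which leaves the disjointness of the shift-sets across distinct $G'\in R_2(a',b)\setminus M_{i,j}(a',b)$ implicit; your appeal to Proposition~\ref{PropositionSymmetrischeDifferenzdigraphRealization} to justify this step is exactly what is needed and is a welcome clarification.
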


\begin{proof}
Consider an $(n \times n)$-adjacency matrix $A'$ of a digraph realization $G' \in R_2(a',b).$ With our assumption we have at least $|a_i'-a_j'|$ entries $A'_{ki}=1$ with $A'_{kj}=0.$ Clearly, $k \neq i$ but it can happen that $k=j.$ In this case there exist at least $(|a_i'-a_j'|-1)$ $(i,j)$-shifts. With our definition of transfers we have $a_i'\geq a'_j+2.$ This proves our claim.
\end{proof}

\begin{Proposition}\label{Proposition:NumberAdjacentRealizations}
Let $(a,b)$ and $(a',b)$ be two different digraphic lists such that $a'$ yields $a$ by a unit $(i,j)$-transfer. Applying all possible $(i,j)$-shifts on all elements in $M_{i,j}(a',b)$ we get a subset of digraph realizations $R_2(a,b)$ that is larger or equals the cardinality of $M_{i,j}(a',b),$ i.e.\ $\left|\bigcup _{G' \in  M_{i,j}(a',b)} \textnormal{Shift}(G',(a,b))\right| \geq  |M_{i,j}(a',b)|.$ 
\end{Proposition}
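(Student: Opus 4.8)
The plan is to follow the proof of Proposition~\ref{Proposition:NumberAdjacentloopRealizations} in spirit, partitioning $M_{i,j}(a',b)$ into \emph{scenarios} $F$ in which every column except $i$ and $j$ is frozen, and then comparing, scenario by scenario, the number of source realizations against the number of realizations produced by shifting. The two genuinely new features compared with the loop-digraph case are the forbidden diagonal (no loops) and the extra condition $k,k'\notin\{i,j\}$ in the adjacency characterization of Proposition~\ref{PropositionSymmetrischeDifferenzdigraphRealization}; both are localized to the two distinguished rows $i$ and $j$, so the bulk of the counting is unchanged.

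First I would fix a scenario $F$. For each row $k$ the number $A_{ki}+A_{kj}$ is determined by $b_k$ and the frozen columns, so the rows of budget $0$ and $2$ are fixed, and only the \emph{mixed} rows (budget $1$) vary. Because loops are forbidden we have $A_{ii}=A_{jj}=0$; hence a mixed row $i$ is forced to the type $(0,1)$ and a mixed row $j$ is forced to $(1,0)$, while every other mixed row is free to be $(1,0)$ or $(0,1)$. Writing $c$ and $d$ for the numbers of \emph{free} rows (those distinct from $i,j$) of type $(1,0)$ and $(0,1)$, the fiber over $F$ has exactly ${c+d\choose c}$ realizations, and—since by Proposition~\ref{PropositionSymmetrischeDifferenzdigraphRealization} an $(i,j)$-adjacency requires a free $(1,0)$ row and a free $(0,1)$ row—the whole fiber lies in $M_{i,j}(a',b)$ precisely when $c\ge 1$ and $d\ge 1$. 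The column sums give $a_i'-a_j'=(c-d)+(\delta_j-\delta_i)$, where $\delta_i,\delta_j\in\{0,1\}$ record whether rows $i,j$ are mixed; together with $a_i'\ge a_j'+2$ this forces $c-d\ge 1$.

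Next I would analyse the $(i,j)$-shifts. By Definition~\ref{Definition:ShiftDigraph} a shift acts on a row $k\ne j$ with $A_{ki}=1,A_{kj}=0$; since row $i$ is never of type $(1,0)$ and row $j$ cannot be shifted (it would create the loop $A_{jj}=1$), the shiftable rows are exactly the $c$ free $(1,0)$ rows. Shifting one of them lands in the fiber over $F$ for $(a,b)$ with free split $(c-1,d+1)$, and every realization with that split is reached: given such a target, unshifting any one of its $d+1$ free $(0,1)$ rows returns a source in the fiber—which lies in $M_{i,j}(a',b)$ because $c\ge1,d\ge1$—mapping back to it. Hence shifting the fiber over $F$ produces exactly ${c+d\choose d+1}$ realizations of $(a,b)$. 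Outputs from distinct scenarios differ in some column other than $i,j$, so they are pairwise disjoint and may be summed.

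Finally I would combine the pieces. Summing over the scenarios with $c\ge1,d\ge1$,
\[
\left|\bigcup_{G'\in M_{i,j}(a',b)}\textnormal{Shift}(G',(a,b))\right| =\sum_{F}{c+d\choose d+1}\ \geq\ \sum_{F}{c+d\choose c}=|M_{i,j}(a',b)|,
\]
where the inequality is termwise from Proposition~\ref{Proposition:PascalsTriangle} applied with $\ell:=c-d\ge1$ (so that $2c-\ell=c+d$ and $c-\ell+1=d+1$). I expect the main obstacle—and the reason the inequality is only weak here, in contrast to the strict bound of Proposition~\ref{Proposition:NumberAdjacentloopRealizations}—to be exactly this last point: in the loop-digraph proof the imbalance equals $a_i'-a_j'\ge2$, whereas here the two forced rows $i,j$ can absorb one unit of imbalance, leaving $c-d$ as small as $1$, for which Proposition~\ref{Proposition:PascalsTriangle} gives equality rather than strict inequality. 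The remaining care is bookkeeping: confirming that no admissible shift creates a loop and that surjectivity of the shift map onto the target fiber survives the loss of the forbidden row-$j$ shift.
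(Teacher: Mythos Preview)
Your proof is correct and follows essentially the same route as the paper: partition $M_{i,j}(a',b)$ into scenarios $F$ with all columns except $i,j$ frozen, count the fiber over each $F$ as ${c+d\choose c}$ and its shift-image as ${c+d\choose d+1}$, then apply Proposition~\ref{Proposition:PascalsTriangle} with $\ell=c-d\ge 1$. The only difference is presentational: the paper splits into four cases according to $(a'_{ij},a'_{ji})\in\{0,1\}^2$ and reads off $\ell\ge 1,2,2,3$ respectively, whereas you encode these two entries as $\delta_i,\delta_j$ and derive the uniform bound $c-d=(a_i'-a_j')-(\delta_j-\delta_i)\ge 1$ in one line; this is a tidier packaging of the same computation, and your observation that equality requires $\delta_j-\delta_i=1$ corresponds exactly to the paper's remark that equality can occur only in its case~1.
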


\begin{proof}
Let $G' \in M_{i,j}(a',b).$ Let us consider a schematic picture for the adjacency matrix of $G'$, i.e.\ $A'=$

\[
\begin{array}{cccccccccccccccccc}
          &           && 1 && \ldots && i             && \ldots && j             && \ldots && n                                      \\
1         & \LiKl{15} &&   &&        && \ob{1}        &&        && \ob{0}        &&        &&   & \ReKl{15} & \rdelim\}{4}{0pt}[$c$]\\
\bigdots  &           &&   &&        && \mi{1}        &&        && \mi{0}                                                            \\
          &           &&   &&        && \mi{\myvdots} &&        && \mi{\myvdots}                                                     \\
c        &           &&   &&        && \un{1}        &&        && \un{0}                                                            \\
\bigdots  &           &&   &&        && \ob{0}        &&        && \ob{1}        &&        &&   &           & \rdelim\}{3}{0pt}[$d$]\\
          &           &&   &&        && \mi{\myvdots} &&        && \mi{\myvdots}                                                     \\
c+d     &           &&   &&        && \un{0}        &&        && \un{1}                                                            \\
i         &           &&   &&        && \ob{0}        &&        && \ob{a'_{ij}}                                                      \\
j         &           &&   &&        && \un{a'_{ji}}  &&        && \un{0}                                                            \\
\bigdots  &           &&   &&        && \ob{1}        &&        && \ob{1}                                                            \\
          &           &&   &&        && \mi{\myvdots} &&        && \mi{\myvdots}                                                     \\
          &           &&   &&        && \un{1}        &&        && \un{1}                                                            \\
          &           &&   &&        && \ob{0}        &&        && \ob{0}                                                            \\
          &           &&   &&        && \mi{\myvdots} &&        && \mi{\myvdots}                                                     \\
n         &           &&   &&        && \un{0}        &&        && \un{0}                                                            \\
          &           &&   &&        && a_i'          &&        && a_j'
\end{array}
\]

There exist $c$ $(i,j)$-shifts in $G'$. Since $a'_i \geq a'_j +2,$ we find in the  $i$th column $d=c-\ell$ rows with entries "zero" ($\ell\geq 1$) whereas the entries of these rows in the $j$th column are "one". There are different possibilities for $c$ and $d,$ but for two $(i,j)$-adjacent elements in $M_{i,j}(a',b)$ these values are fixed with Proposition \ref{PropositionSymmetrischeDifferenzdigraphRealization}. Note that $d\geq 1,$ otherwise $G' \notin M_{i,j}(a',b).$ Note that we only consider matrices $A' \in M_{ij}(a',b)$ in a fixed scenario $F$, i.e.\ all entries that do not belong to the $i$th and $j$th column of a matrix in scenario $F$ are identical. Clearly, only two matrices in such a fixed scenario can be $(i,j)$-adjacent. All such scenarios we denote by $\mathcal{F}$. We distinguish in scenario $F$ between four different cases.

\begin{description}
\item[case 1:] $a'_{ij}=0$ and $a'_{ji}=1$. Then we have with $d:=c-\ell$ at least $2$ possible $(i,j)$-shifts. It follows $\ell\geq 1.$
\item[case 2:] $a'_{ij}=0$ and $a'_{ji}=0$. Then we have with $d:=c-\ell$ at least $3$ possible $(i,j)$-shifts. It follows $\ell \geq 2.$
\item[case 3:] $a'_{ij}=1$ and $a'_{ji}=1$. Then we have with $d:=c-\ell$ at least $3$ possible $(i,j)$-shifts. It follows $\ell \geq 2.$
\item[case 4:] $a'_{ij}=1$ and $a'_{ji}=0$. Then we have with $d:=c-\ell$ at least $4$ possible $(i,j)$-shifts. It follows $\ell \geq 3.$
\end{description}

In all four cases there do exist ${c+d\choose c}={2c-\ell \choose c}$ digraph realizations or no digraph realization of $(a',b)$. The reason is that the entries of the $(c+d)$ rows can be permuted maintaining the row and column sums. After all possible $(i,j)$-shifts in each non-empty case for a fixed pair $c,d$ in a scenario $F$, we get ${c+d \choose d+1}={2c-\ell \choose c-\ell+1}$ digraph realizations for each case. Applying 
Proposition~\ref{Proposition:PascalsTriangle} we obtain our claim. Note that equality can only appear in case~1. 
\end{proof}

\begin{thm}\label{Theorem:CountDigraphRealizations}
Let $(a,b)$ and $(a',b)$ be two different lists such that $a'$ yields $a$ by a unit $(i,j)$-transfer. Then it follows $N_2(a,b)\geq N_2(a',b).$ 
\end{thm}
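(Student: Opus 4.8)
The plan is to mirror the structure of the proof of Theorem~\ref{CountloopDigraphRealizations} for loop-digraphs, since the two supporting Propositions~\ref{Proposition:NumberNonAdjacentRealizations} and \ref{Proposition:NumberAdjacentRealizations} have already been established for the digraph case. First I would dispose of the trivial case: if $(a',b)$ is not digraphic, then $N_2(a',b)=0$ and the inequality $N_2(a,b)\geq 0$ holds automatically. So assume $(a',b)$ is a digraphic list. Since $a'$ yields $a$ by a unit $(i,j)$-transfer, Theorem~\ref{Theorem:DigraphSequences} (together with Remark~\ref{Remark:TheoremDigraphSequences}, which removes the need to sort $a$ when $a$ is obtained by a single transfer) guarantees that $(a,b)$ is also a digraphic list, so the counting problem is nonempty on both sides.

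Next I would partition the realizations of $(a',b)$ according to the dichotomy already used in the loop case: every $G'\in R_2(a',b)$ lies either in $M_{i,j}(a',b)$ or in $R_2(a',b)\setminus M_{i,j}(a',b)$. Applying all possible $(i,j)$-shifts to every realization in $R_2(a',b)$ produces a subset of $R_2(a,b)$, hence $N_2(a,b)$ is at least the cardinality of the union of these shift-sets. The key point is that this union splits cleanly across the two parts of the partition: realizations outside $M_{i,j}(a',b)$ produce shift-sets that are pairwise disjoint and also disjoint from the shift-sets arising from $M_{i,j}(a',b)$ (since, by Proposition~\ref{PropositionSymmetrischeDifferenzdigraphRealization}, two realizations can yield a common shift only when they are $(i,j)$-adjacent, i.e.\ lie in $M_{i,j}(a',b)$). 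I would then bound the two contributions separately: Proposition~\ref{Proposition:NumberNonAdjacentRealizations} gives a lower bound of $|R_2(a',b)\setminus M_{i,j}(a',b)|$ for the first part, and Proposition~\ref{Proposition:NumberAdjacentRealizations} gives a lower bound of $|M_{i,j}(a',b)|$ for the second. Adding these yields
\[
N_2(a,b)\;\geq\;|R_2(a',b)\setminus M_{i,j}(a',b)|+|M_{i,j}(a',b)|\;=\;N_2(a',b),
\]
which is exactly the claimed inequality.

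The main obstacle—and the reason this statement is genuinely weaker than its loop-digraph analogue Theorem~\ref{CountloopDigraphRealizations}—is that strict inequality can fail. Example~\ref{Example:DigraphCounterExample} shows two distinct threshold lists related by a single transfer, both with exactly one realization, so that $N_2(a,b)=N_2(a',b)=1$. The underlying cause is that in the non-adjacent case only $(|a'_i-a'_j|-1)$ shifts are guaranteed (Proposition~\ref{Proposition:NumberNonAdjacentRealizations}), which can be merely $1$ when $a'_i=a'_j+2$ and the forbidden $k=j$ row actually carries a one; and in the adjacent case, case~1 of Proposition~\ref{Proposition:NumberAdjacentRealizations} permits equality ${2c-\ell\choose c-\ell+1}={2c-\ell\choose c}$ when $\ell=1$. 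Consequently I would state and prove only the weak inequality $N_2(a,b)\geq N_2(a',b)$ here, deferring the strict version to the lexicographic setting of Theorem~\ref{Theorem:CountDigraphRealizationsLexicographic}, where the additional ordering hypothesis on $(a,b)$ rules out the degenerate configurations that produce equality.
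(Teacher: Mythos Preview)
Your proposal is correct and follows essentially the same approach as the paper: dispose of the non-digraphic case trivially, invoke Remark~\ref{Remark:TheoremDigraphSequences} to ensure $(a,b)$ is digraphic, partition $R_2(a',b)$ into $M_{i,j}(a',b)$ and its complement, and combine the bounds from Propositions~\ref{Proposition:NumberNonAdjacentRealizations} and~\ref{Proposition:NumberAdjacentRealizations}. Your explicit justification (via Proposition~\ref{PropositionSymmetrischeDifferenzdigraphRealization}) that the shift-images of the two parts are disjoint is actually a point the paper leaves implicit, and your closing discussion of why strictness fails is accurate and matches the paper's surrounding commentary.
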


\begin{proof}
If $(a',b)$ is not a digraphic list, then $N_2(a',b)=0$ and the inequality holds trivially. So let us consider the case that $(a',b)$ is a digraphic list. Then $(a,b)$ is also a digraphic list with Remark \ref{Remark:TheoremDigraphSequences}. A digraph realizations of $(a',b)$ is either in $M_{i,j}(a',b)$ or in $R_2(a',b)\setminus M_{i,j}(a',b)$. Now we apply for all these realizations of $(a',b)$ all possible $(i,j)$-shifts. Then we get $$\bigcup_{G' \in R_2(a',b)}\textnormal{Shift}_{i,j}(G',(a,b))\subset R_2(a,b).$$
  
(Note that not all elements in $R_2(a,b)$ are necessarily achieved by such shifts.) We apply Propositions \ref{Proposition:NumberNonAdjacentRealizations} and \ref{Proposition:NumberAdjacentRealizations} and get
 \begin{eqnarray*}
 & N_2(a,b) &\\
 & \geq & \left|\bigcup_{G' \in R_2(a',b)}\textnormal{Shift}_{i,j}(G',(a,b))\right|\\
&=&  \left|\left(\bigcup_{G' \in R_2(a',b)\setminus M_{i,j}(a',b)}\textnormal{Shift}_{i,j}(G',(a,b))\right) \cup  \left (\bigcup_{G' \in  M_{i,j}(a',b)}\textnormal{Shift}_{i,j}(G',(a,b))\right)\right|  \\
  &\geq & \left|R_2(a',b)\setminus M_{i,j}(a',b)\right|+\left|M_{i,j}(a',b)\right| \\
&=&N_2(a',b).
 \end{eqnarray*}
\end{proof}

\begin{corollary}\label{Corollary:NumberDigraphRealizationsNonincreasing}
Let $(a,b)$ and $(a',b)$ be two different digraphic lists, vector $a$ is nonincreasing and $a \prec a'.$ Then $N_2(a,b) \geq N_2(a',b).$
\end{corollary}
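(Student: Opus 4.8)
The plan is to mirror exactly the argument used for the loop-digraph analogue in Corollary~\ref{Korollar:CountLoopDigraphRealizations}, replacing the single-transfer engine Theorem~\ref{CountloopDigraphRealizations} by its digraph counterpart Theorem~\ref{Theorem:CountDigraphRealizations}, and taking care that only the weak inequality survives. First I would invoke Theorem~\ref{Theorem:DigraphSequences} together with Muirhead's Theorem~\ref{MuirheadLemma} to manufacture a transfer path. Since $a$ is nonincreasing, $a \prec a'$, and $(a',b)$ is digraphic, Theorem~\ref{Theorem:DigraphSequences} guarantees that $(a,b)$ is digraphic and that $a$ is reached from $a'$ through $\kappa = \tfrac12\sum_{j}|a_j'-a_j|$ successive unit transfers, each intermediate list staying digraphic. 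This produces a transfer path $(a^1,b),\dots,(a^r,b)$ with $(a^1,b)=(a',b)$, $(a^r,b)=(a,b)$, and consecutive lists related by a single unit transfer, so that $a^{i+1}\prec a^i$.

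Second, I would induct on the length $r$ of the path. The base case $r=2$ is precisely Theorem~\ref{Theorem:CountDigraphRealizations}, which gives $N_2(a',b)\le N_2(a,b)$. For the inductive step I apply the induction hypothesis to the subpath $(a^2,b),\dots,(a^r,b)$ to obtain $N_2(a^2,b)\le N_2(a^r,b)=N_2(a,b)$, and apply Theorem~\ref{Theorem:CountDigraphRealizations} to the first transfer to obtain $N_2(a^1,b)=N_2(a',b)\le N_2(a^2,b)$; chaining the two inequalities yields $N_2(a',b)\le N_2(a,b)$. A point worth stressing in the write-up is that I must \emph{not} require the intermediate lists $a^i$ to be nonincreasing: by Remark~\ref{Remark:TheoremDigraphSequences} and the hypotheses of Theorem~\ref{Theorem:CountDigraphRealizations}, the nonincreasing assumption is used only once, to invoke Muirhead at the path-construction stage, and the single-transfer comparison at each step needs no sorting. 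This matters because, as remarked after Example~\ref{Example:transferPaths}, a digraphic transfer path may legitimately pass through lists that are not lexicographically nonincreasing.

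The only genuinely delicate issue — and the place where this corollary differs from its loop-digraph version — is that the conclusion is the weak inequality $\ge$ rather than the strict one. This is forced rather than a defect of the method: Theorem~\ref{Theorem:CountDigraphRealizations} itself yields only $\ge$, and Example~\ref{Example:DigraphCounterExample} exhibits a single transfer carrying one threshold list to another threshold list, so that $N_2(a,b)=N_2(a',b)=1$ even though $a\prec a'$. Consequently no strict improvement can be guaranteed along a transfer path, and the main care required in the proof is simply to resist over-claiming strictness and to confirm that the single-transfer theorem is applicable at every step without imposing a sorting condition on the intermediate lists. The strict statement is instead recovered later, under the additional lexicographic hypothesis treated in Theorem~\ref{Theorem:CountDigraphRealizationsLexicographic}.
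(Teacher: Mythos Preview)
Your proposal is correct and matches the paper's own proof essentially line for line: construct a transfer path via Theorem~\ref{Theorem:DigraphSequences}, then induct on its length using Theorem~\ref{Theorem:CountDigraphRealizations} at each step to chain the weak inequalities. Your additional remarks---that the intermediate $a^i$ need not be nonincreasing (since Theorem~\ref{Theorem:CountDigraphRealizations} imposes no sorting hypothesis) and that strictness genuinely fails by Example~\ref{Example:DigraphCounterExample}---are accurate and clarify points the paper leaves implicit.
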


\begin{proof}
By Theorem~\ref{Theorem:DigraphSequences} there exists at least one transfer path $(a^1,b),\dots,(a^r,b)$ where $(a^1,b)=(a',b)$,  $(a^r,b):=(a,b)$ and $a^{i+1} \prec a^{i}$. We show by induction on $r$ the correctness of the claim. For $r=2$ we apply 
Theorem~\ref{Theorem:CountDigraphRealizations} and get $N_2(a',b) \leq N_2(a,b).$ We consider the transfer path $(a^2,b),\dots, (a^r,b).$ With our induction hypothesis we can conclude $N_2(a^2,b) \leq N_2(a^r,b).$ For $(a^1,b)$ and $(a^2,b)$ we apply again 
Theorem~\ref{Theorem:CountDigraphRealizations}. This yields $N_2(a^1,b)\leq N_2(a^2,b)\leq N_2(a^r,b).$
\end{proof}

\begin{thm}\label{Theorem:CountDigraphRealizationsLexicographic}
Let $(a,b)$ be a lexicographically nonincreasing list and $(a',b)$ a list such that $a'$ yields $a$ by a unit $(i,j)$-transfer. Then it follows $N_2(a,b)\geq N_2(a',b).$ If $(a',b)$ is a digraphic list, then we have  $N_2(a,b) > N_2(a',b).$
\end{thm}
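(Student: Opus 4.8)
The inequality $N_2(a,b)\ge N_2(a',b)$ is exactly Theorem~\ref{Theorem:CountDigraphRealizations} (whose hypothesis is weaker than ours), so the whole task is the strict inequality under the additional assumption that $(a',b)$ is digraphic; then $(a,b)$ is digraphic as well by Remark~\ref{Remark:TheoremDigraphSequences}. My plan is to split the argument according to the size of the transfer gap $a_i'-a_j'\ge 2$. If $a_i'-a_j'\ge 3$, I claim strictness already follows from the two counting propositions, \emph{without} using the lexicographic order: since $R_2(a',b)\neq\emptyset$, either $R_2(a',b)\setminus M_{i,j}(a',b)\neq\emptyset$, in which case Proposition~\ref{Proposition:NumberNonAdjacentRealizations} contributes the factor $a_i'-a_j'-1\ge 2$, or $M_{i,j}(a',b)\neq\emptyset$, in which case a short check shows that in every one of the four cases of Proposition~\ref{Proposition:NumberAdjacentRealizations} the index satisfies $\ell\ge 2$, so Proposition~\ref{Proposition:PascalsTriangle} yields a \emph{strict} binomial inequality. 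In either situation the total count of shifted realizations strictly exceeds $N_2(a',b)$.

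This reduces everything to the remaining case $a_i'-a_j'=2$, equivalently $a_i=a_j$. Here the counting is genuinely tight (the nonadjacent factor is only $1$ and the adjacent case may land in case~$1$ with $\ell=1$), so I would prove strictness instead by exhibiting a single realization of $(a,b)$ that is \emph{not} reachable by any $(i,j)$-shift. The key is a clean description of the shift image: unwinding the definition of an $(i,j)$-shift, a realization $G$ of $(a,b)$ lies in $\bigcup_{G'\in R_2(a',b)}\textnormal{Shift}_{i,j}(G',(a,b))$ if and only if it admits a reverse shift, i.e.\ there is a row $k\notin\{i,j\}$ with $A_{kj}=1$ and $A_{ki}=0$. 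Consequently it suffices to construct a realization of $(a,b)$ with \emph{no bad row}, meaning the support of column $j$ restricted to rows outside $\{i,j\}$ is contained in the support of column $i$; such a realization is automatically outside the image, and since the image still has at least $N_2(a',b)$ elements we obtain $N_2(a,b)\ge N_2(a',b)+1$.

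To build such a realization I would start from an arbitrary realization and drive the number of bad rows down to zero. Because $a_i=a_j$, an ordinary $2$-swap between a bad row and an excess row merely relocates the badness and does not decrease the count, which is exactly why a new operation is required; the right tool is the reorientation of a directed $3$-cycle, precisely the move that produced the extra realization $A^{*}$ in Example~\ref{Example:Digraphlexicographical}. Given a bad row $k$, the candidate reorientation reverses the cycle on $\{i,j,k\}$ built from the arcs $(k,j),(j,i),(i,k)$, turning them into $(j,k),(i,j),(k,i)$; one checks that this preserves $(a,b)$ and removes $k$ from the bad set while creating no new bad row. Here the lexicographic hypothesis enters: from $i<j$ and $a_i=a_j$ it forces $b_i\ge b_j$, and this surplus of out-arcs at $i$ over $j$ is what should guarantee that the required arcs $(j,i)$ and $(i,k)$ are present while $(i,j)$ and $(j,k)$ are absent --- the very configuration that fails in the non-lexicographic Example~\ref{Example:DigraphCounterExample}, where the reversing arc already exists and the count stays equal.

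The main obstacle is this last step: establishing, from $b_i\ge b_j$ alone, that an \emph{unblocked} $3$-cycle reorientation (or a short sequence of swaps yielding one) always exists, including the boundary sub-cases in which $A_{ij}=1$ or $A_{ji}=0$ and the canonical triangle on $\{i,j,k\}$ is blocked. I expect to handle these by first normalizing the starting realization with bad-count-preserving $2$-swaps so that the out-neighbourhoods of $i$ and $j$ are arranged compatibly with $b_i\ge b_j$, and, when no triangle on $\{i,j,k\}$ is available, by routing the rotation through a fourth vertex. Once a bad-row-free realization is produced it witnesses the strict inequality, and the theorem follows.
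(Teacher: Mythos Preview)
Your overall architecture is right and close to the paper's: the reduction to $a_i'-a_j'=2$ via the strict versions of Propositions~\ref{Proposition:NumberNonAdjacentRealizations} and~\ref{Proposition:NumberAdjacentRealizations} is exactly what the paper does, and in the remaining case the paper, like you, produces one extra realization of $(a,b)$ outside the shift image by a reorientation move. The difference is in how that extra realization is obtained, and this is precisely where your plan still has the gap you flag.

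You propose to start from an \emph{arbitrary} realization of $(a,b)$ and iteratively drive the number of bad rows to zero via $3$-cycle reversals, worrying (correctly) that for a given bad row $k$ the arcs $(j,i),(i,k)$ and non-arcs $(i,j),(j,k)$ need not be present. The paper sidesteps the iteration entirely by choosing the starting point so that only one bad row ever appears and the required configuration is forced. Concretely, the paper carries out the $d=0$ scenario analysis and observes that cases~2,~3,~4 already give at least two shifts, so strictness can only fail in a scenario $F$ landing in case~1 ($A'_{ij}=0$, $A'_{ji}=1$, hence $c=1$). For such an $F$ there is a unique $G'$ and a unique $(i,j)$-shift, at some row $\kappa$; the resulting $G$ has exactly one bad row, namely $\kappa$. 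Now the lexicographic hypothesis gives $b_i\ge b_j$, and together with $A'_{ji}=1$, $A'_{ij}=0$ this forces a column $k\neq i,j$ with $A'_{ik}=1$, $A'_{jk}=0$. Hence in $G$ the $3$-\emph{path} $p=(\kappa,j,i,k)$ is present with all reverse arcs absent, and replacing $p$ by $p^{*}=(\kappa,i,j,k)$ yields a realization $G^{*}$ with no bad row at all (the case $k=\kappa$ is allowed and gives your $3$-cycle as a degeneration). Because $d=0$ in $G^{*}$, it is not an $(i,j)$-shift of anything, which is exactly your ``outside the image'' conclusion.

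So the fix to your plan is not to iterate but to pick the right $G'$: restrict attention to the tight scenarios (case~1 with $d=0$), where a single $3$-path reorientation is guaranteed to work and the boundary sub-cases you worried about cannot occur. If no such scenario exists, strictness already follows from the counting and no extra realization is needed.
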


\begin{proof}
If $(a',b)$ is not a digraphic list, then $N_2(a',b)=0$ and the inequality holds trivially. So let us consider the case that $(a',b)$ is a digraphic list. Then $(a,b)$ is also a digraphic list with Remark~\ref{Remark:TheoremDigraphSequences}. We distinguish between the cases, that $G' \in M_{i,j}(a',b)$ and $G' \in R_2(a',b)\setminus M_{i,j}(a',b).$  For $G' \in R_2(a',b)\setminus M_{i,j}(a',b)$  and  $a'_i \geq a'_j +3$ we always find at least two possible $(i,j)$-shifts for each such digraph realization. For $G' \in M_{i,j}(a',b)$ and $a'_i \geq a'_j+3$ consider all four cases in the proof of Proposition~\ref{Proposition:NumberAdjacentRealizations}. Clearly, we find there $\ell\geq 2$ in case 1. With Proposition \ref{Proposition:PascalsTriangle} we can prove our claim.\\
Let us now assume $a'_i:=a'_j+2.$ We first consider a digraph realization $G' \in M_{i,j}(a',b).$ Since $d>0$ and $a'_i:=a'_j+2$, we have $c \geq 2$ and can use an analogous proof as in Proposition~\ref{Proposition:NumberAdjacentRealizations}. We assume $d=0$ and consider a schematic picture for an adjacency matrix $A'$ with $A':=$

\[
\begin{array}{cccccccccccccccccc}
          &           && 1 && \ldots && i             && \ldots && j             && \ldots && n                                      \\
1         & \LiKl{12} &&   &&        && \ob{1}        &&        && \ob{0}        &&        &&   & \ReKl{12} & \rdelim\}{4}{0pt}[$c$]\\
\bigdots  &           &&   &&        && \mi{1}        &&        && \mi{0}                                                            \\
          &           &&   &&        && \mi{\myvdots} &&        && \mi{\myvdots}                                                     \\
c        &           &&   &&        && \un{1}        &&        && \un{0}                                                            \\
i         &           &&   &&        && \ob{0}        &&        && \ob{a'_{ij}}                                                      \\
j         &           &&   &&        && \un{a'_{ji}}  &&        && \un{0}                                                            \\
\bigdots  &           &&   &&        && \ob{1}        &&        && \ob{1}                                                            \\
          &           &&   &&        && \mi{\myvdots} &&        && \mi{\myvdots}                                                     \\
          &           &&   &&        && \un{1}        &&        && \un{1}                                                            \\
          &           &&   &&        && \ob{0}        &&        && \ob{0}                                                            \\
          &           &&   &&        && \mi{\myvdots} &&        && \mi{\myvdots}                                                     \\
n         &           &&   &&        && \un{0}        &&        && \un{0}                                                            \\
          &           &&   &&        && a_i'          &&        && a_j'
\end{array}.
\]

Note that we only consider matrices $A' \in M_{ij}(a',b)$ in a fixed scenario $F$, i.e.\ all entries that do not belong to the $i$th and $j$th column of a matrix in scenario $F$ are identical. Clearly, only two matrices in such a fixed scenario can be $(i,j)$-adjacent. All such scenarios we denote by $\mathcal{F}$. We distinguish in a scenario $F$ between four different cases.

\begin{description}
\item[case 1:] $a'_{ij}=0$ and $a'_{ji}=1$. Then we have $c=1$ and so one possible $(i,j)$-shift.
\item[case 2:] $a'_{ij}=0$ and $a'_{ji}=0$. Then we have $c=2$ and so two possible $(i,j)$-shifts.
\item[case 3:] $a'_{ij}=1$ and $a'_{ji}=1$. Then we have $c=2$ and so two possible $(i,j)$-shifts.
\item[case 4:] $a'_{ij}=1$ and $a'_{ji}=0$. Then we have with $c=3$ three possible $(i,j)$-shifts.
\end{description}

Note that $G' \in R_2(a',b)\setminus M_{i,j}(a',b)$ in all four cases, because one cannot find a valid directed alternating cycle of length four in $A'$. (That means that an existing alternating cycle cannot been switched to yield a new digraph realization.) We observe that case~1 and case~4 are mutually exclusive in a fixed scenario $F$. The reason is that we cannot yield $b_i$ and $b_j$ without changing entries in columns that are different from $i$ and $j$.\\
Let us assume that there exists at least one scenario $F$ with case~1; otherwise we get $N_2(a,b)>N_2(a',b)$. Then arc $(i,j)$ does not belong to any digraph realization of $(a',b).$ Since, $a_i=a_j$ and $(a,b)$ is lexicographically nonincreasing, we have $b_i \geq b_j.$ Since, $a'_{ji}=1,$ there exists in an arbitrary digraph realization $G'$ a $k \neq i,j$ with $a'_{ik}=1$ and $a'_{jk}=0.$ On the other hand we have our unique $(i,j)$-shift and so for a $\kappa\neq i,j$, $a'_{\kappa i}=1$ and $a'_{\kappa j}=0.$ Note that we can have $k=\kappa.$ After applying the $(i,j)$-shift in $G'$ we get a digraph realization $G \in R_1(a,b)$ containing a directed $3$-path $p:=(\kappa,j,i,k).$ We get a further digraph realization $G^*$ in $R_1(a,b)$ if we change path $p$ to $p^*:=(\kappa,i,j,k).$ This is possible, since arcs $(\kappa,i)$ and $(i,j)$ and $(j,k)$ are not in $G.$ The adjacency matrix $A^*$ of Digraph $G^*$ cannot been yield by an adjacency matrix $A'$ of case~4 of a scenario $F^* \neq F$, although we find $A^*_{ij}=1$ and $A^*_{ji}=0$ and the $k$th column was changed. The reason is that $d=0$ in $A^*$. Hence, $A^*$ cannot be constructed by an $(i,j)$-shift of any $A'$ Hence, we can find for each scenario $F$ with case~1 twice the amount of realizations. In summery, we get $N_2(a',b)<N_2(a,b).$
\end{proof}

Back to our Example~\ref{Example:Digraphlexicographical} we find with Theorem~\ref{Theorem:CountDigraphRealizationsLexicographic} that $N_2(a',b)<N_2(a,b).$ Here the directed path $p$ in our proof is a directed cycle of length three. In Example \ref{Example:DigraphCounterExample} we have to sort digraphic list $(a,b)$ in lexicographical order. That is $(a_{\sigma},b_{\sigma})=((2,2),(2,0),(1,3),$ $(1,1)).$ Hence, the adjacency matrix is its Ferrers matrix $A:=\left(\begin{matrix}
0&1&1&0\\
0&0&0&0\\
1&1&0&1\\
1&0&0&0
\end{matrix}\right).$ If we sort $(a',b)$ by permutation $\sigma$, we get $(a'_{\sigma},b_{\sigma})=((2,2),(2,0),(0,3),(2,1)).$ Hence we have $a'_{\sigma} \prec a_{\sigma},$ but $(a'_{\sigma},b_{\sigma})$ is not lexicographically nonincreasing. Hence, it is not possible to apply 
Theorem~\ref{Theorem:CountDigraphRealizationsLexicographic}.

\begin{corollary}\label{Korollar:CountLoopRealizations}
Let $(a,b)$ be a lexicographically nonincreasing digraphic list, $(a',b)$ be a digraphic list, $a \neq a'$ and $a \prec a'.$ Then $N_2(a,b) > N_2(a',b).$
\end{corollary}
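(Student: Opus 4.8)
The plan is to mimic the inductive argument of the loop-digraph version (Corollary~\ref{Korollar:CountLoopDigraphRealizations}), but to take care about where the \emph{strict} inequality can actually be extracted. First I would invoke Theorem~\ref{Theorem:DigraphSequences}: since $(a,b)$ is lexicographically nonincreasing, its vector $a$ is in particular nonincreasing, so together with $(a',b)$ digraphic and $a\prec a'$ the theorem produces a transfer path $(a^1,b),\dots,(a^r,b)$ with $(a^1,b)=(a',b)$, $(a^r,b)=(a,b)$, and $a^{i+1}\prec a^i$, passing only through digraphic lists. Because $a\neq a'$, we have $r\geq 2$.

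The crucial point---and the main obstacle---is that the intermediate lists $(a^i,b)$ need \emph{not} be lexicographically nonincreasing; this is exactly the phenomenon stressed after Example~\ref{Example:transferPaths}. Consequently the strict estimate of Theorem~\ref{Theorem:CountDigraphRealizationsLexicographic} cannot be applied at every transfer, and only the terminal list $(a^r,b)=(a,b)$ is guaranteed to satisfy the lexicographic hypothesis. I would therefore extract the strict inequality solely from the last transfer and settle for the weak bound of Theorem~\ref{Theorem:CountDigraphRealizations} at all earlier ones.

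Concretely I would induct on $r$. In the base case $r=2$ there is a single transfer from the digraphic list $(a',b)$ into the lexicographically nonincreasing list $(a,b)$, so the strict part of Theorem~\ref{Theorem:CountDigraphRealizationsLexicographic} gives $N_2(a,b)>N_2(a',b)$. For the inductive step $r>2$, I split off the first transfer $(a^1,b)\to(a^2,b)$ and apply the induction hypothesis to the shorter path $(a^2,b),\dots,(a^r,b)$, which still terminates at the lexicographically nonincreasing list $(a,b)$; since $(a^2,b)$ is digraphic, $a\neq a^2$ (as $r>2$) and $a\prec a^2$, the hypothesis yields $N_2(a,b)>N_2(a^2,b)$. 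For the first transfer I only claim the weak bound $N_2(a^2,b)\geq N_2(a',b)$ from Theorem~\ref{Theorem:CountDigraphRealizations}. Chaining the two inequalities gives
$$N_2(a,b)>N_2(a^2,b)\geq N_2(a',b),$$
the desired strict inequality. All the content beyond the loop-digraph case thus lies in localizing the strict gain to the single transfer whose target list satisfies the lexicographic hypothesis, and letting the induction carry that strictness back to $(a',b)$.
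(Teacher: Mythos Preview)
Your proposal is correct and follows essentially the same approach as the paper: induct on the length $r$ of the transfer path furnished by Theorem~\ref{Theorem:DigraphSequences}, use the strict inequality from Theorem~\ref{Theorem:CountDigraphRealizationsLexicographic} only at the final transfer into the lexicographically nonincreasing list $(a,b)$, and use the weak bound from Theorem~\ref{Theorem:CountDigraphRealizations} on the first transfer $(a',b)\to(a^2,b)$. Your explicit remark that intermediate lists need not be lexicographically nonincreasing (hence the strict theorem cannot be applied at every step) is a useful clarification that the paper's proof leaves implicit.
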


\begin{proof}
There exists at least one transfer path $(a^1,b),\dots,(a^r,b)$ where $(a^1,b):=(a',b)$ and $(a^r,b):=(a,b)$ such that $a^i$ yields $a^{i+1}$ by a unit transfer see Theorem~\ref{Theorem:DigraphSequences}. We show by induction on $r$ the correctness of the claim. For $r=2$ we apply Theorem~\ref{Theorem:CountDigraphRealizationsLexicographic} and get $N_2(a',b)<N_2(a,b).$ Let us now assume $r>2.$ We consider the transfer path $(a^2,b),\dots, (a,b).$ With our induction hypothesis we can conclude $N_2(a^2,b)<N_2(a,b).$ For $(a',b)$ and $(a^2,b)$ we apply Theorem~\ref{Theorem:CountDigraphRealizations}. This yields $N_2(a',b)\leq N_2(a^2,b)<N_2(a,b).$
\end{proof}

\subsection{The number of digraph realizations for permuted lists}

In the next step we want to show that a digraphic list $(a,b)$ possesing a so-called opposed ordering has at least as many realizations as a list $(a_{\sigma},b)$ where $b$ was permuted by $\sigma$. This leads to the result that opposed minconvex lists possess the largest number of digraph realizations under all digraph realizations with $n$ pairs and fixed $m=\sum_{i=1}^{n}a_i.$ We start with a definition of these lists.

\begin{Definition}\label{Definition:opposedSequences}
Let $(a,b)$ be a digraphic list with nonincreasing $a$ and nondecreasing $b.$ We call $(a,b)$ an \emph{opposed list}.
\end{Definition}

In the next proposition we want to prove that a digraphic list $(a_{\tau},b)$ has more digraphic realizations as $(a,b)$, when the first list is `nearby' to the property to be opposed than $(a,b)$. It is not possible to apply the results of our last sections, neither Muirheads Lemma nor the majorization results. Clearly, a vector $a$ can be majorized by its permuted vector $a_{\tau}$, but it turns out that the nonincreasing permutation $a_{\tau}$ majorizes all other permutations of $a$. For Muirheads Theorem and for all of our majorization results we need the situation that the majorized vector is the nonincreasing one. This situation is not possible for permutations. It turns out that we find an contrary result of  Theorem~\ref{Theorem:DigraphSequences}. To prove this result we choose a similar procedure as in the beginning of this paper. We define left-transfers on lists and the corresponding operation left-shifts on matrices similar to transfers and shifts as in the last subsection. The difference is that we shift from right to left and two entries of a list (two column sums) can differ by one, which is not possible for transfers.

\begin{Definition}[left-transfer]
For an integer list $a$ with $a_i \leq a_j+1$ for $i,j$ such that $1 \leq i<j \leq n$, the list obtained from $a$ by an {\it (i,j)-left-transfer}  is the list $a+e_i+e_i$. Sometimes, we simply use the term \emph{left-transfer} without specifying the indices.
\end{Definition} 

\begin{Definition}\label{Definition:Left}
We call an operation on the $(n \times n)$-adjacency matrix $A'$ of digraph realization $G'\in R_2(a',b)$ that switches the entries of $A'_{ki}=0$ and $A'_{kj}=1$, where $k\neq j$ and $i<j$, an \emph{$(i,j)$-left-shift on $G'$}. 
\end{Definition}

\begin{Proposition}\label{PropositionDigraphSequencesLefttransfer}
Let $(a',b)$ be a digraphic list and let $(a,b)$ be a list where $b$ is nondecreasing and $a'$ yields $a$ by a unit left-transfer. Then $(a,b)$ is a digraphic list and $N_2(a,b)\geq N_2(a',b).$
\end{Proposition}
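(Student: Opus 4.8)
The plan is to reproduce the column-shift analysis of Theorem~\ref{Theorem:CountDigraphRealizations}, but now with the operation that removes an arc $(k,j)$ and inserts $(k,i)$, i.e.\ an $(i,j)$-left-shift as in Definition~\ref{Definition:Left}, and to organize the argument by the size of the gap $a'_j-a'_i$. First I would record the elementary facts. A unit left-transfer gives $a=a'+e_i-e_j$, so $a_i=a'_i+1$ and $a_j=a'_j-1$; comparing partial sums in the given index order shows $a'\prec a$, and since $i<j$ the transfer pushes $a$ towards being nonincreasing, hence towards being opposed to the nondecreasing $b$. On the matrix side an $(i,j)$-left-shift turns a realization of $(a',b)$ into a realization of $(a,b)$ whenever some row $k\neq i$ satisfies $A'_{kj}=1$ and $A'_{ki}=0$ (the constraint $k\neq j$ is automatic, as there are no loops). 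Because the statement only claims $N_2(a,b)\ge N_2(a',b)$, it is enough to produce, from the realizations of $(a',b)$, at least as many distinct realizations of $(a,b)$; this simultaneously shows $R_2(a,b)\neq\emptyset$, hence that $(a,b)$ is digraphic, once $R_2(a',b)\neq\emptyset$.

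For the case $a'_j\ge a'_i+2$ I would simply reuse the counting that is already available. Writing the left-transfer as $a=a'-e_j+e_i$ with $a'_j\ge a'_i+2$, this is a unit transfer in the sense of Propositions~\ref{Proposition:NumberNonAdjacentRealizations} and~\ref{Proposition:NumberAdjacentRealizations} with the two indices interchanged. Their proofs use only the two column sums, the two diagonal entries $a'_{ij},a'_{ji}$, and the no-loop condition on the switched column, none of which depends on which column stands to the left; hence they apply after relabeling and yield $N_2(a,b)\ge|R_2(a',b)\setminus M_{i,j}(a',b)|+|M_{i,j}(a',b)|=N_2(a',b)$.

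The genuinely new situations are the small gaps $a'_j-a'_i\in\{-1,0,1\}$, where a single column no longer has enough room for the generic shift count. Here some realizations $G'$ of $(a',b)$ admit no loop-free left-shift at all, namely exactly those with $\{k:A'_{kj}=1\}\setminus\{k:A'_{ki}=1\}\subseteq\{i\}$; one checks that this forces $a'_j\le a'_i$ together with a rigid column pattern. For every $G'$ that does admit a left-shift I would keep the $M_{i,j}$-decomposition of the previous paragraph. For a stuck $G'$ I would instead reroute along a length-two alternating path: delete $(r_1,j)$ and $(r_2,c)$ and insert $(r_1,c)$ and $(r_2,i)$ for a suitable intermediate column $c\notin\{i,j\}$, which again lowers the in-degree of $j$ by one and raises that of $i$ by one while fixing every out-degree. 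This is the directed-path reorientation used at the end of the proof of Theorem~\ref{Theorem:CountDigraphRealizationsLexicographic}; the hypothesis that $b$ is nondecreasing, i.e.\ $b_i\le b_j$ for $i<j$, is exactly what supplies the extra out-arc of $j$ needed to find $c$ and $r_2$.

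The part I expect to be hardest is twofold. First, I must prove that such a reroute is genuinely always available for a stuck realization: this is a pigeonhole/augmenting-path statement that has to be extracted from the rigid pattern $\{k:A'_{kj}=1\}\subseteq\{k:A'_{ki}=1\}$ together with $b_i\le b_j$, in the spirit of the $3$-path construction already in the paper. Second, I must control collisions: the reroute images must be disjoint from the shift images and from one another, so that the stuck realizations really add their full count to $N_2(a,b)$ rather than overlapping with what the shifts already produce. Once the availability of the reroute and this disjointness are established, summing the three gap cases gives both that $(a,b)$ is digraphic and that $N_2(a,b)\ge N_2(a',b)$.
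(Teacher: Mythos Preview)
Your overall shape is right, and in particular you correctly pinpoint where the hypothesis $b_i\le b_j$ enters (to find the extra out-arc of $j$ that feeds the $3$-path reroute). There is, however, a genuine gap in the small-gap analysis.

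You only reroute realizations that are \emph{stuck}, i.e.\ have no admissible $(i,j)$-left-shift at all, and for every non-stuck $G'$ you plan to ``keep the $M_{i,j}$-decomposition'' of Propositions~\ref{Proposition:NumberNonAdjacentRealizations} and~\ref{Proposition:NumberAdjacentRealizations}. Those propositions, after swapping $i$ and $j$, need $a'_j\ge a'_i+2$; for the critical gap $a'_j=a'_i+1$ (which is in fact the only case the paper's proof treats) they do not give the inequality you need. Concretely, fix a scenario $F$ (all entries outside columns $i$ and $j$ frozen) and let $c$ (resp.\ $d$) be the number of rows $k\notin\{i,j\}$ with $(A'_{ki},A'_{kj})=(0,1)$ (resp.\ $(1,0)$). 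In case~4, namely $a'_{ij}=1$ and $a'_{ji}=0$, one gets $c=d$, so there are $\binom{2d}{d}$ realizations of $(a',b)$ in $F$, but all $(i,j)$-left-shifts together produce only $\binom{2d}{d-1}$ realizations of $(a,b)$. For $d\ge1$ these $G'$ are \emph{not} stuck, yet there is a strict deficit within $F$; your plan produces fewer targets than sources here, and the reroute you reserve for stuck $G'$ does not touch them.

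The paper fixes exactly this deficit, and it does so scenario by scenario rather than realization by realization: using $b_i\le b_j$ it finds a column $k\neq i,j$ with $A'_{ik}=0$, $A'_{jk}=1$, reorients the $3$-path $(\kappa,i,j,k)$ to $(\kappa,j,i,k)$, and thereby pairs each case~4 scenario $F$ with a case~1 scenario $F^*$ (different in column $k$); case~1 has the opposite imbalance $\binom{2d+2}{d+2}\to\binom{2d+2}{d+1}$, and the two together exactly balance. The $c=d=0$ sub-case (your ``stuck'' case) is handled separately by a direct construction. So the idea you call ``reroute'' is the right tool, but it must be applied to all of case~4, organized as a pairing of scenarios, not merely to the stuck realizations; and you then need to check that distinct $F$'s go to distinct $F^*$'s (the collision control you anticipated, but one level up).
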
 

\begin{proof}
There exist $i,j$ with $i<j$ such that $a'_{i}=a'_j-1$ and $a_i=a_j+1.$ Let $A'$ be an adjacency matrix of a digraph realization $G'$ of $(a',b)$. An $(i,j)$-left-shift on $A'$ results in an adjacency matrix $A$ of a digraph realization from $(a,b)$. We consider a schematic picture of such an $A'$ to see what types of $(i,j)$-left-shifts are possible, i.e.\ $A':=$  
\[
\begin{array}{cccccccccccccccccc}
          &           && 1 && \ldots && i             && \ldots && j             && \ldots && n                                      \\
1         & \LiKl{15} &&   &&        && \ob{1}        &&        && \ob{0}        &&        &&   & \ReKl{15} & \rdelim\}{4}{0pt}[$d$]\\
\bigdots  &           &&   &&        && \mi{1}        &&        && \mi{0}                                                            \\
          &           &&   &&        && \mi{\myvdots} &&        && \mi{\myvdots}                                                     \\
d        &           &&   &&        && \un{1}        &&        && \un{0}                                                            \\
\bigdots  &           &&   &&        && \ob{0}        &&        && \ob{1}        &&        &&   &           & \rdelim\}{3}{0pt}[$c$]\\
          &           &&   &&        && \mi{\myvdots} &&        && \mi{\myvdots}                                                     \\
d+c     &           &&   &&        && \un{0}        &&        && \un{1}                                                            \\
i         &           &&   &&        && \ob{0}        &&        && \ob{a'_{ij}}                                                      \\
j         &           &&   &&        && \un{a'_{ji}}  &&        && \un{0}                                                            \\
\bigdots  &           &&   &&        && \ob{1}        &&        && \ob{1}                                                            \\
          &           &&   &&        && \mi{\myvdots} &&        && \mi{\myvdots}                                                     \\
          &           &&   &&        && \un{1}        &&        && \un{1}                                                            \\
          &           &&   &&        && \ob{0}        &&        && \ob{0}                                                            \\
          &           &&   &&        && \mi{\myvdots} &&        && \mi{\myvdots}                                                     \\
n         &           &&   &&        && \un{0}        &&        && \un{0}                                                            \\
          &           &&   &&        && a_i'          &&        && a_j'
\end{array}
\]

There exist $c$ $(i,j)$-left-shifts in $A'$. Since $a'_i=a'_j -1,$ we find in the  $i$th column $c=d-\ell$ rows with entries "zero" ($\ell\geq 0$) whereas the entries of these rows in the $j$th column are "one". Note that we only consider matrices $A'$ from realizations in $R_2(a',b)$ for a fixed scenario $F$, i.e.\ all entries that do not belong to the $i$th and $j$th column of a matrix in scenario $F$ are identical. Clearly, only two matrices in such a fixed scenario can produce the same matrix $A$ after applying an $(i,j)$-left-shift. Additionally we need then $c\geq 1$. All such scenarios we denote by $\mathcal{F}$. We distinguish in scenario $F$ between four different cases.

\begin{description}
\item[case 1:] $a'_{ij}=0$ and $a'_{ji}=1$. Then we have $c:=d+2$ possible $(i,j)$-left-shifts.
\item[case 2:] $a'_{ij}=0$ and $a'_{ji}=0$. Then we have $c:=d+1$ possible $(i,j)$-left-shifts.
\item[case 3:] $a'_{ij}=1$ and $a'_{ji}=1$. Then we have $c=d+1$  possible $(i,j)$-left-shifts.
\item[case 4:] $a'_{ij}=1$ and $a'_{ji}=0$. Then we have $c:=d$ possible $(i,j)$-left-shifts. 
\end{description}

In all four cases there do exist ${c+d\choose c}$ digraph realizations or no digraph realization of $(a',b)$ for a fixed scenario $F$. The reason is that the entries of the $(c+d)$ rows can be permuted maintaining the row and column sums. Note that case~1 and case~4 are mutually exclusive in a fixed scenario $F$; the reason are the $i$th and $j$th row sums which can only maintained by changing the scenario. Let us first assume that case~1 exists in each scenario $F \in \mathcal{F}$. We consider a schematic picture of $A$ after applying an $(i,j)$-left-shift, i.e.\ $A:=$

\[
\begin{array}{cccccccccccccccccc}
          &           && 1 && \ldots && i             && \ldots && j             && \ldots && n                                      \\
1         & \LiKl{15} &&   &&        && \ob{1}        &&        && \ob{0}        &&        &&   & \ReKl{15} & \rdelim\}{4}{0pt}[$d+1$]\\
\bigdots  &           &&   &&        && \mi{1}        &&        && \mi{0}                                                            \\
          &           &&   &&        && \mi{\myvdots} &&        && \mi{\myvdots}                                                     \\
d        &           &&   &&        && \un{1}        &&        && \un{0}                                                            \\
\bigdots  &           &&   &&        && \ob{0}        &&        && \ob{1}        &&        &&   &           & \rdelim\}{3}{0pt}[$c-1$]\\
          &           &&   &&        && \mi{\myvdots} &&        && \mi{\myvdots}                                                     \\
d+c     &           &&   &&        && \un{0}        &&        && \un{1}                                                            \\
i         &           &&   &&        && \ob{0}        &&        && \ob{a'_{ij}}                                                      \\
j         &           &&   &&        && \un{a'_{ji}}  &&        && \un{0}                                                            \\
\bigdots  &           &&   &&        && \ob{1}        &&        && \ob{1}                                                            \\
          &           &&   &&        && \mi{\myvdots} &&        && \mi{\myvdots}                                                     \\
          &           &&   &&        && \un{1}        &&        && \un{1}                                                            \\
          &           &&   &&        && \ob{0}        &&        && \ob{0}                                                            \\
          &           &&   &&        && \mi{\myvdots} &&        && \mi{\myvdots}                                                     \\
n         &           &&   &&        && \un{0}        &&        && \un{0}                                                            \\
          &           &&   &&        && a_i'          &&        && a_j'
\end{array}
\]

Then we get in each case ${c+d \choose c-1}$ digraph realizations. In case~1 we have in scenario $F$, ${2d+2 \choose d+2}$ matrices of form $A'$ and ${2d+2 \choose d+1}$ matrices of form $A$, that is the number of realizations of $(a,b)$ becomes larger. In case~2 and case~3 we find in scenario $F$ ${2d+1 \choose d+1}$ matrices of form $A'$ and ${2d+1 \choose d}$ matrices of form $A.$ Hence, the number of realizations of $(a',b)$ equals the number of realizations in $(a,b)$ for these two cases. This connections can easily be seen in Pascals triangle. We get $N_2(a,b)\geq N_2(a',b).$\\
Let us now assume there exists a scenario $F$ with case~4. Remember that case~1 is absent in $F$. Since $b$ is nondecreasing there exists in scenario $F$ a $k$ with $k\neq i,$ $A'_{ik}=0$ and $A'_{jk}=1$;otherwise $b_{i}\leq b_j$ is not possible in case~4. Assume now that $d>0$. Then after applying an $(i,j)$-left-shift on each $A'$ in case~4 we get ${2c \choose c-1}$ realizations of form $A.$ If we apply the $(i,j)$-left-shift on entry $A'_{\kappa,j}=1$ we find in the corresponding realization $A$ a directed path $p=(\kappa,i,j,k)$ with $\kappa \neq i,j$. (Note that $\kappa =k$ is possible.) We reorient $p$ to $p^*=(\kappa,j,i,k)$ and get a new realization $G^*$ for $(a',b)$. Note that $G^*$ belongs to a different scenario $F^*$, since arc $(j,k)$ is here absent whereas it belongs to scenario $F$. Moreover, $G^*$ can be constructed from an $A'$ of case~1 in $F^*$. This is true, because in $A'$ of scenario $F$ exists a $\kappa' \neq \kappa$ with $A'_{\kappa' i}=1$ and $A'_{\kappa' j}=0$; otherwise $c>d$. But then we get $A'_{\kappa' i}=A_{\kappa' i}=A^*_{\kappa' i}=1$, $A'_{\kappa' j}=A_{\kappa' j}=A^*_{\kappa' j}=0$ and $A^*$ was constructed by an $(i,j)$-left-shift on $A^{'}$ in scenario $F^*$ where $A^{'}_{\kappa' i}=0$ and $A^{'}_{\kappa' j}=1$. Hence, we can find for each scenario $F$ with case~4 a further scenario $F^*$ with case~1. Let us know consider the number of realizations of such a pair of scenarios before and after an $(i,j)$-left-shift. It is sufficient to consider case~1 and case~4, because the other two cases remain the number of realizations after an $(i,j)$-left-shift. Case~4 has in scenario $F$ ${2c \choose c}$ matrices of form $A'$ and case~1 has in scenario $F^*$ ${2c \choose c+1}$ matrices of form $A'$. After applying all possible $(i,j)$-left-shifts, case~4 yields in scenario $F$ ${2c \choose c-1}$ matrices of form $A$ and case~1 produces in scenario $F^*$ ${2c \choose c}$ matrices of form $A$. Since ${2c \choose c+1}={2c \choose c-1}$ the number of digraph realizations for a pair of scenarios $F$ and $F^*$ equals the number of digraph realizations after all possible $(i,j)$-left-shifts. \\
It remains to consider a scenario $F$ with case~4 and $d=0$. Note that there can only exist one matrix $A'$ in $F$. Moreover, there is no possible  $(i,j)$-left-shift in $A'$. Since $A'_{ik}=0$ and $A'_{jk}=1$ (see above) we can find a digraph realization of $(a,b)$ in setting $A_{ik}=1$, $A_{jk}=0$, $A_{ij}=0$, $A_{ji}=1$ and for all other entries we take the entries of $A'$. Note that this is not a realization which was produced by another scenario $F^*$, because $c=d=0$ and so no left-shift can produce this realization. Hence, the unique realization $A'$ produces the unique realization $A$. In summery, we find for all cases that $N_2(a,b)\geq N_2(a',b).$
\end{proof}

\begin{Proposition}\label{PropositionOrderingOfTuplesDigraphSequences}
Let $(a,b)$ be a digraphic list and let $(a_{\tau},b)$ be a list where $b$ is nondecreasing and $a$ was permuted by a transposition $\tau:\{1,\dots,n\}\mapsto \{1,\dots,n\}$ such that there exist $i<j$ with $a_{i}<a_j$ and $a_{\tau(i)}>a_{\tau(j)}.$ Then $(a,b_{\tau})$ is a digraphic list and $N_2(a_{\tau},b)\geq N_2(a,b).$
\end{Proposition}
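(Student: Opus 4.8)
The plan is to compare $R_2(a_\tau,b)$ with $R_2(a,b)$ directly on the level of adjacency matrices, where (as in Definitions~\ref{Definition:ShiftDigraph} and~\ref{Definition:Left}) a realization is an $n\times n$ zero-diagonal $0/1$-matrix whose $t$th column sum is the indegree $a_t$ and whose $s$th row sum is the outdegree $b_s$. Since $\tau$ exchanges positions $i<j$ with $a_i<a_j$, the list $a_\tau$ is obtained from $a$ by interchanging the two column sums $a_i$ and $a_j$, while the row sums $b$ are untouched. First I would note that the conclusion should read ``$(a_\tau,b)$ is digraphic''; this comes for free once a map $\Phi\colon R_2(a,b)\to R_2(a_\tau,b)$ is produced, because a digraphic $(a,b)$ has $R_2(a,b)\neq\emptyset$, so its image is a nonempty subset of $R_2(a_\tau,b)$.

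To build $\Phi$, take a realization $A$ of $(a,b)$ and interchange columns $i$ and $j$. This already gives the correct column sums $a_\tau$ and preserves every row sum, so the only possible defect is on the diagonal: after the swap the $(i,i)$- and $(j,j)$-entries become $A_{ij}$ and $A_{ji}$, i.e.\ loops may appear. Thus everything is governed by the $2\times 2$ block $(A_{ij},A_{ji})$, and I would split into the four cases used throughout the paper. If $(A_{ij},A_{ji})=(0,0)$ the swapped matrix is already loopless and is the desired $\Phi(A)$. If $(A_{ij},A_{ji})=(1,1)$ I would instead swap only the entries of columns $i,j$ in the rows different from $i,j$ and keep the arcs $i\to j$ and $j\to i$ in place; a short degree check shows this again preserves all row sums and the swapped column sums and produces no loop.

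The two genuinely directed cases are $(1,0)$ and $(0,1)$, where the naive column swap creates a single loop (at $i$, respectively at $j$), and here the opposed hypotheses enter exactly as in the proof of Proposition~\ref{PropositionDigraphSequencesLefttransfer}. For $(A_{ij},A_{ji})=(1,0)$ the loop sits at $i$; since $A_{ji}=0$ all $b_j$ outneighbours of $j$ avoid $i$, while $i$ spends one outarc on $j$, so from $b_j\ge b_i$ (that is, $b$ nondecreasing) there is a vertex $t\notin\{i,j\}$ with $A_{jt}=1$ and $A_{it}=0$; the $2$-switch deleting the loop and the arc $j\to t$ and inserting $i\to t$ and $j\to i$ removes the loop, preserves all degrees, and lands in $R_2(a_\tau,b)$. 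The case $(0,1)$ is symmetric but uses the indegree inequality $a_i<a_j$: it guarantees a vertex $s\notin\{i,j\}$ pointing to $j$ but not to $i$, and the mirror $2$-switch removes the loop at $j$. Reorienting such a directed path of length three is exactly the move $(\kappa,i,j,k)\mapsto(\kappa,j,i,k)$ already used in Theorem~\ref{Theorem:CountDigraphRealizationsLexicographic}.

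The hard part is not the existence of $\Phi$ but showing it does not lose realizations, i.e.\ that the number of images is at least $N_2(a,b)$. The obstruction, familiar from Proposition~\ref{PropositionSymmetrischeDifferenzdigraphRealization}, is that the repaired matrices coming from the cases $(1,0)$ and $(0,1)$ carry the same block $(0,1)$ and can collide, so $\Phi$ need not be injective on the nose. I would therefore finish with the scenario-counting device of Proposition~\ref{PropositionDigraphSequencesLefttransfer}: partition $R_2(a,b)$ and $R_2(a_\tau,b)$ into scenarios $F$ that fix all entries outside columns $i,j$, so that within a scenario only the ``free'' rows (those forced to place exactly one $1$ into columns $i,j$) vary and the block $(A_{ij},A_{ji})$ is constant. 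In each scenario the realizations of $(a,b)$ and of $(a_\tau,b)$ are counted by two binomial coefficients differing only in choosing $a_i$ versus $a_j$ ones among the free rows. Because these binomials are not monotone, a naive scenario-by-scenario comparison can fail; the real work, and the step I expect to be most delicate, is to pair each scenario realizing the defect configuration of case $(1,0)$ with a partner scenario as in Proposition~\ref{PropositionDigraphSequencesLefttransfer}, so that the summed counts compare correctly through Proposition~\ref{Proposition:PascalsTriangle}. This pairing is where the hypotheses that $b$ is nondecreasing and $a_i<a_j$ are indispensable, and it yields $N_2(a_\tau,b)\ge N_2(a,b)$.
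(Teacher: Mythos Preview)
Your approach is genuinely different from the paper's and considerably harder than necessary. The paper's proof is essentially two lines: it writes the transposition as a chain of $r-1=a_j-a_i$ unit $(i,j)$-left-transfers
\[
(a,b)=(a^1,b),\ (a^2,b),\ \dots,\ (a^r,b)=(a_\tau,b),
\]
and applies Proposition~\ref{PropositionDigraphSequencesLefttransfer} to each consecutive pair to get $N_2(a^{k+1},b)\ge N_2(a^k,b)$. That proposition already contains all the scenario bookkeeping and the delicate case~1/case~4 pairing you describe, so nothing has to be redone here; the proposition is simply invoked repeatedly along the chain. In particular, the fact that $(a_\tau,b)$ is digraphic also falls out of the chain.

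Your plan instead tries to bridge $(a,b)$ and $(a_\tau,b)$ in a single jump via a column-swap-plus-repair map $\Phi$. That is a reasonable instinct, but the place where you explicitly defer---``the step I expect to be most delicate''---is a real gap, not a formality. In Proposition~\ref{PropositionDigraphSequencesLefttransfer} the scenario/pairing argument works because a unit left-transfer moves the two relevant binomial coefficients by exactly one step along a row of Pascal's triangle, so the case~1/case~4 pairing produces matching counts via $\binom{2c}{c-1}=\binom{2c}{c+1}$. For a full transposition the column sums differ by $a_j-a_i$, which can be arbitrarily large, and the two binomials you would need to compare within a scenario are $\binom{c+d}{c}$ versus $\binom{c+d}{d}$ with $|c-d|$ depending on that gap; the single ``partner scenario'' trick no longer balances them. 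You would end up having to iterate the pairing, which amounts to rediscovering the chain-of-left-transfers argument inside your counting step. So the missing idea is precisely the decomposition into unit steps: once you have Proposition~\ref{PropositionDigraphSequencesLefttransfer}, use it $a_j-a_i$ times instead of rebuilding its proof for a large jump.
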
 
 
\begin{proof}
We define $r:=a_j-a_i+1$ digraphic lists $(a^1,b),\dots,(a^r,b)$ such that $(a,b)=(a^1,b)$, $(a_{\tau},b)=(a^r,b)$ and each $a^k$ yields $a^{k+1}$ by a unit $(i,j)$-left-transfer. With Proposition~\ref{PropositionDigraphSequencesLefttransfer} each list $(a^k,b)$ is digraphic for $1 \leq k \leq r$ and we find $N_2(a_{\tau},b)\geq N_2(a,b).$
\end{proof}
 
Each list $(a,b)$ with nonincreasing $b$ can be changed by a list of transpositions $\tau_1,\dots,\tau_r$ in its opposed list $(a_{\sigma},b)$, such that the for a switch of the $i$th and $j$th component of $a$ with transposition $\tau_k$ where $i<j$, we have $a_{i}<a_j$ and $a_{\tau_k(i)}>a_{\tau_k(j)}.$ We apply the last proposition for each transposition step and obtain with $\sigma:=\tau_r \circ \tau_{r-1} \circ \dots \tau_1$ the following result.

\begin{thm}\label{Theorem:opposedSequences}
 Let $(a,b)$ be a digraphic list and $(a_{\sigma},b)$ be its opposed list, i.e.\ vector $b$ is nondecreasing and $\sigma$ is a permutation such that $a_{\sigma}$ is nonincreasing. Then $(a_{\sigma},b)$ is a digraphic list and $N_2(a_{\sigma},b)\geq N_2(a,b).$
 \end{thm}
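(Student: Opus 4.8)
The plan is to obtain $(a_\sigma,b)$ from $(a,b)$ by a finite sequence of transpositions of the coordinates of $a$, each of which is an admissible step for Proposition~\ref{PropositionOrderingOfTuplesDigraphSequences}, and then to chain the inequalities that proposition provides. The key observation is that the vector $b$ never changes and is nondecreasing by assumption, so the hypothesis on $b$ required by that proposition is automatically met at every step; hence I only have to reorder $a$ while controlling $N_2$.

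First I would run a bubble sort on $a$. As long as $a$ is not nonincreasing, there is an index $k$ with $a_k<a_{k+1}$; let $\tau$ be the transposition exchanging positions $k$ and $k+1$. With $i:=k$ and $j:=k+1$ we have $i<j$ and $a_i<a_j$ before the exchange, while afterwards $a_{\tau(i)}=a_j>a_i=a_{\tau(j)}$, so both hypotheses $a_i<a_j$ and $a_{\tau(i)}>a_{\tau(j)}$ of Proposition~\ref{PropositionOrderingOfTuplesDigraphSequences} hold. That proposition therefore guarantees that the new list is again digraphic and that $N_2$ does not decrease. Each such exchange of an adjacent strict inversion lowers the (nonnegative, integer) inversion count of $a$ by exactly one, so after finitely many steps $\tau_1,\dots,\tau_r$ the vector $a$ becomes nonincreasing; setting $\sigma:=\tau_r\circ\cdots\circ\tau_1$, the resulting list is exactly the opposed list $(a_\sigma,b)$. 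Writing $a^1:=a,\,a^2,\dots,a^{r+1}:=a_\sigma$ for the successive vectors and applying the proposition once per step yields
\[
N_2(a,b)=N_2(a^1,b)\le N_2(a^2,b)\le\cdots\le N_2(a^{r+1},b)=N_2(a_\sigma,b),
\]
together with digraphicness of every intermediate list, in particular of $(a_\sigma,b)$.

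The only step that deserves care is the reachability-and-termination claim, namely that repeatedly fixing adjacent strict inversions genuinely arrives at the nonincreasing order. This is precisely the correctness of bubble sort, and termination is immediate because the inversion count strictly decreases at each step, so no real obstacle appears here. I would also remark that equal entries of $a$ are never exchanged, since we act only on strict inversions $a_k<a_{k+1}$; and in any case exchanging two equal coordinates of $a$ while keeping $b$ fixed leaves every pair unchanged, so the target opposed list $(a_\sigma,b)$ is well defined independently of the order in which equal values are placed.
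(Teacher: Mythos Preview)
Your proof is correct and follows the same approach as the paper: decompose the passage from $a$ to $a_\sigma$ into a sequence of transpositions, each of which fixes a strict inversion $a_i<a_j$ with $i<j$, and apply Proposition~\ref{PropositionOrderingOfTuplesDigraphSequences} at every step to propagate both digraphicness and the inequality on $N_2$. The paper states this in one sentence without specifying the transpositions; you make it concrete by choosing adjacent swaps (bubble sort) and supply the termination argument via the inversion count, which the paper leaves implicit.
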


Consider two simple examples. $(a,b):=((2,2),(2,1),(1,2))$ and its opposed list $(a,b_{\tau}):=((2,1),(2,2),(1,2))$ are both threshold lists and each possesses exactly one digraph realization. List $(a,b):=((2,1),(1,0),(0,2))$ is not digraphic but its opposed list $(a,b_{\tau}):=((2,0),(1,1),(0,2))$ is a threshold list. Hence we have $N_2(a,b)<N_2(a,b_{\tau}).$

In a last step of this subsection we again consider minconvex lists. We use the definition of $\alpha$ on page~15 with the little difference that here $m\leq {n \choose 2}$. 

\begin{Definition}
Let $\tau: |V| \mapsto |V|$ be an arbitrary permutation and $\alpha_{\tau}$ be a permutation of integer list $\alpha.$ We call a list 
$(\alpha,\alpha_{\tau})$ \emph{minconvex list}. 
\end{Definition}

Now, we are able to prove the general result for minconvex digraphic lists.

\begin{corollary}\label{Korrolar:opposedMinconvexSequences}
Let $(a,b)$ be an arbitrary digraphic list with nonincreasing $a$ and $\sum_{i=1}^{n}a_i=m.$ Then we find for the opposed minconvex list $(\alpha,\alpha_{\sigma})$ that $N_2(\alpha,\alpha_{\sigma})\geq N_2(a,b).$
\end{corollary}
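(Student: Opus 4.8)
The plan is to follow the proof of Corollary~\ref{Korollar:NumberRealizationsMinconvexLoop} for loop-digraphs, but to replace every step in which the second degree sequence would be sorted \emph{freely} --- which is illegitimate here, since for digraphs permuting $b$ alone changes $N_2$ --- by either a vertex-relabeling or the arc-reversal symmetry. Three facts will be used repeatedly. First, reversing all arcs of a digraph is a bijection between the realizations of $(a,b)$ and those of $(b,a)$ that preserves simplicity and the absence of loops (it transposes the adjacency matrix), so $N_2(a,b)=N_2(b,a)$. Second, relabeling the vertices by a permutation $\pi$ sends $(a,b)$ to $(a_\pi,b_\pi)$ with the same number of realizations. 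Third, Theorem~\ref{Theorem:MinconvexSequence} gives $\alpha\prec c$ for every nonincreasing list $c$ of length $n$ with $\sum_i c_i=m$.

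First I would optimize the in-degree sequence. As $a$ is nonincreasing with $\sum_i a_i=m$, Theorem~\ref{Theorem:MinconvexSequence} gives $\alpha\prec a$, Theorem~\ref{Theorem:DigraphSequences} shows $(\alpha,b)$ is digraphic, and Corollary~\ref{Corollary:NumberDigraphRealizationsNonincreasing} yields $N_2(\alpha,b)\geq N_2(a,b)$ (with equality if $\alpha=a$). Next I would optimize the out-degree sequence. Since $(a,b)$ is digraphic we have $\sum_i b_i=m$. Using arc-reversal I pass to $(b,\alpha)$, and relabel by a permutation $\tau$ making the first component nonincreasing, reaching $(b_\tau,\alpha_\tau)$ with $N_2(b_\tau,\alpha_\tau)=N_2(b,\alpha)=N_2(\alpha,b)$. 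Now $b_\tau$ is nonincreasing of sum $m$, so $\alpha\prec b_\tau$; since $(b_\tau,\alpha_\tau)$ is digraphic, $(\alpha,\alpha_\tau)$ is digraphic by Theorem~\ref{Theorem:DigraphSequences}, and Corollary~\ref{Corollary:NumberDigraphRealizationsNonincreasing} applied to $(\alpha,\alpha_\tau)$ and $(b_\tau,\alpha_\tau)$ (common second component) gives $N_2(\alpha,\alpha_\tau)\geq N_2(b_\tau,\alpha_\tau)$. Chaining, $N_2(\alpha,\alpha_\tau)\geq N_2(a,b)$.

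It now remains to show that, among all lists with first component the nonincreasing $\alpha$ and second component a permutation of $\alpha$, the opposed list $(\alpha,\alpha_\sigma)$ with $\alpha_\sigma$ nondecreasing has the most realizations. This is the content of Theorem~\ref{Theorem:opposedSequences}, but that theorem is stated for a fixed \emph{nondecreasing second} component and a permuted first component --- the transpose of my situation. So I would relabel $(\alpha,\alpha_\tau)$ by the reversal permutation (turning the nonincreasing $\alpha$ in the first slot into the nondecreasing $\alpha_\sigma$), then reverse arcs to bring $\alpha_\sigma$ into the second slot; this produces a digraphic list $(c,\alpha_\sigma)$ with $c$ a permutation of $\alpha$ and $N_2(c,\alpha_\sigma)=N_2(\alpha,\alpha_\tau)$. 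Theorem~\ref{Theorem:opposedSequences} then permutes $c$ into its nonincreasing rearrangement $\alpha$ while keeping $\alpha_\sigma$ fixed, giving $N_2(\alpha,\alpha_\sigma)\geq N_2(c,\alpha_\sigma)$. Concatenating all inequalities proves $N_2(\alpha,\alpha_\sigma)\geq N_2(a,b)$.

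The majorization inequalities are routine, being immediate from Theorem~\ref{Theorem:MinconvexSequence} and Corollary~\ref{Corollary:NumberDigraphRealizationsNonincreasing}. The main obstacle --- the very reason the digraph case needs the left-transfer machinery culminating in Theorem~\ref{Theorem:opposedSequences} --- is the second coordinate: one cannot sort $b$ in isolation, and the maximizer is the \emph{opposed} (nondecreasing) arrangement rather than an arbitrary permutation. The care will lie in routing all permutations through vertex-relabelings and arc-reversals so that Theorem~\ref{Theorem:opposedSequences} applies with its hypotheses literally met, and in verifying that the list it outputs is exactly the opposed minconvex list $(\alpha,\alpha_\sigma)$.
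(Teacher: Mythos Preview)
Your proposal is correct and follows essentially the same route as the paper: majorize the in-degrees via Corollary~\ref{Corollary:NumberDigraphRealizationsNonincreasing}, relabel and reverse arcs to majorize the out-degrees the same way, and finish with Theorem~\ref{Theorem:opposedSequences} to reach the opposed minconvex list. If anything, you are more careful than the paper in the last step: the paper simply says ``since $(\alpha,\alpha_\sigma)$ is the opposed list of $(\alpha,\alpha_\tau)$ we get with Theorem~\ref{Theorem:opposedSequences} the claim,'' whereas you make explicit the additional relabeling and arc-reversal needed to put $(\alpha,\alpha_\tau)$ into the exact form (nondecreasing second component, permuted first component) required by the hypotheses of Theorem~\ref{Theorem:opposedSequences}.
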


\begin{proof}
When $(a,b)$ is a minconvex list, then we obtain the claim by Theorem~\ref{Theorem:opposedSequences}. So let us assume that $(a,b)$ is not a minconvex list. We transform list $(a,b)$ into the minconvex list $(\alpha,\alpha_{\tau})$ by the following series of lists $(a,b),(\alpha,b),(\alpha_{\tau},b_{\tau}),(\alpha_{\tau},\alpha),(\alpha,\alpha_{\tau})$ where $\tau$ is a permutation such that $b_{\tau}$ is nonincreasing. Since by Theorem~\ref{Theorem:MinconvexSequence} $\alpha \prec a$ and $\alpha \prec b_{\tau}$ and $\alpha$ is nonincreasing by definition we get by Corollary~\ref{Corollary:NumberDigraphRealizationsNonincreasing} that $N_2(a,b)\leq N_2(\alpha,b)$ and $N_2(\alpha_{\tau},b_{\tau})\leq N_2(\alpha_{\tau},\alpha)$ if we switch the components in the last pair of lists. Clearly, $N_2(\alpha_{\tau},b_{\tau})= N_2(\alpha,b)$ and $N_2(\alpha_{\tau},\alpha)= N_2(\alpha,\alpha_{\tau})$. In summery we obtain $N_2(\alpha,\alpha_{\tau})\geq N_2(a,b).$ Since list $(\alpha,\alpha_{\sigma})$ is the opposed list of $(\alpha,\alpha_{\tau})$ we get with Theorem~\ref{Theorem:opposedSequences} the claim.
\end{proof}

\subsection{The number of graph realizations and majorization}

The connection between the number of graph realizations and majorization can easily be proved using the results of the last subsection. For that we consider instead the graphic list $a$ digraphic list $(a,a)$ and restrict the set of all digraph realizations to symmetric digraphs, i.e. for each arc $(v,w)\in A(G)$ also exists arc $(w,v) \in A(G).$ The set of all graph realizations we denote by $R_3(a)$ and we set $N_3(a):=|R_3(a)|.$ The set of all symmetric digraph realizations of $(a,a)$ we denote by $\hat{R}_2(a,a)$ and set $\hat{N}_2(a,a):=|\hat{R}_2(a,a)|.$ Clearly, we cannot use the notion of shifts of the last two subsections, because this would result in a non-symmetric digraph realization. On the other hand, we can simply distinguish between \emph{horizontal (i,j)-shifts} and \emph{vertical $(i,j)$-shifts.} Horizontal $(i,j)$-shifts for an adjacency matrix $A'$ of $G' \in R_3(a')$ are exactly the $(i,j)$-shifts of Definition \ref{Definition:ShiftDigraph}. Vertical $(i,j)$-shifts correspond to horizontal $(i,j)$-shifts in the sense that the symmetric entries have to be shifted in the corresponding columns. (Note that with this definition loops will be excluded.) Clearly, if we always apply horizontal and vertical shifts simultaneously we again yield a symmetric realization and so a graph realization. As in the Proposition~\ref{PropositionSymmetrischeDifferenzdigraphRealization} two symmetric digraph realizations are $(i,j)$-adjacent if there symmetric difference can be found on the $i$th and $j$th columns of their adjacency matrices. Clearly, there exist the corresponding symmetric difference on their rows. We use the notion of $(i,j)$-adjacency in this new sense. $M_{i,j}(a,a)$ denotes here the set of all \emph{symmetric digraph realizations} of $(a,a)$ which possess at least one $(i,j)$-adjacent symmetric digraph realization in $\hat{R}_2(a,a)$. It turns out, that the case of the graph realization problem is more simple than the case of the digraph realization problem. One reason is that each $(a,a)$ is lexicographically nonincreasing if $a$ is nonincreasing. Another point is the absence of difficult cases as they appear in the proof of Theorem~\ref{Theorem:CountDigraphRealizationsLexicographic}. 

\begin{thm}\label{Theorem:CountGRaphRealizations}
Let $a$ be a nonincreasing integer list and $a'$ an integer list such that $a'$ yields $a$ by a unit $(i,j)$-transfer. Then it follows $N_3(a)\geq N_3(a').$ If $a'$ is a graphic list, then we have  $N_3(a) > N_3(a').$
\end{thm}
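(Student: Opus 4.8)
The plan is to realize the graphic list $a$ as the symmetric digraphic list $(a,a)$, so that $N_3(a)=\hat{N}_2(a,a)$, and then to run the same two-part counting as in Theorems~\ref{Theorem:CountDigraphRealizations} and \ref{Theorem:CountDigraphRealizationsLexicographic}, with the single change that every horizontal $(i,j)$-shift is performed together with its vertical partner. Since the adjacency matrix of a graph realization is symmetric with zero diagonal, such a combined shift moves a symmetric pair of entries $A_{ki}=A_{ik}=1$ into $A_{kj}=A_{jk}=1$ for a free vertex $k\notin\{i,j\}$, and therefore returns a symmetric realization, i.e.\ a graph realization of $a$. First I would dispose of the trivial case: if $a'$ is not graphic then $N_3(a')=0$ and $N_3(a)\geq 0$ holds. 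Otherwise $a\prec a'$ (a single transfer yields majorization) and $a$ is nonincreasing by hypothesis, so $a$ is graphic by Theorem~\ref{Theorem:AignerTriesch}.

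Next I would set up the block structure of Proposition~\ref{Proposition:NumberAdjacentRealizations} for a symmetric realization $G'$ of $(a',a')$: let $c$ be the number of free vertices $k\notin\{i,j\}$ adjacent to $i$ but not to $j$ (rows of pattern $(1,0)$), and $d$ the number adjacent to $j$ but not to $i$ (rows of pattern $(0,1)$). A combined $(i,j)$-shift flips one $(1,0)$-vertex into a $(0,1)$-vertex. The decisive simplification for graphs is that $A_{ij}=A_{ji}$, so the entries on rows $i$ and $j$ contribute equally to the two relevant column sums; hence $a'_i-a'_j=c-d$ in \emph{every} realization, and the transfer hypothesis $a'_i\geq a'_j+2$ forces $\ell:=c-d\geq 2$. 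In particular $c\geq 2$, so a valid combined shift always exists, and the degenerate case $\ell=1$, the only source of equality in the directed argument, cannot occur.

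With this the two halves go through. For each $G'\in\hat{R}_2(a',a')\setminus M_{i,j}(a',a')$ the $c\geq 2$ free $(1,0)$-vertices give $c$ distinct graph realizations of $a$, and by the symmetric analogue of Proposition~\ref{PropositionSymmetrischeDifferenzdigraphRealization} the shift-images of distinct non-adjacent realizations are disjoint, producing at least $2\,|\hat{R}_2(a',a')\setminus M_{i,j}(a',a')|$ realizations. For the adjacent part I would split $M_{i,j}(a',a')$ into scenarios $F$ with fixed $(c,d)$: each scenario contains ${c+d \choose c}={2c-\ell \choose c}$ realizations of $a'$ and, after all combined shifts, yields ${c+d \choose d+1}={2c-\ell \choose c-\ell+1}$ realizations of $a$, which is strictly larger by Proposition~\ref{Proposition:PascalsTriangle} because $\ell\geq 2$. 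Since any realization lying in the shift-image of both a non-adjacent and an adjacent source would force those two sources to be $(i,j)$-adjacent, the two collections are disjoint and their bounds add; whether or not $M_{i,j}(a',a')$ is empty we obtain $N_3(a)>\hat{N}_2(a',a')=N_3(a')$, while $N_3(a)\geq N_3(a')$ holds trivially when $a'$ is not graphic.

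The main obstacle is precisely pinning down that the degenerate case is excluded. In the directed setting (proof of Theorem~\ref{Theorem:CountDigraphRealizationsLexicographic}) the asymmetric configurations, case~1 with $\ell=1$, gave only equality and had to be repaired by an \emph{ad hoc} path-reorientation argument that relied on the lexicographic order. Here symmetry ($A_{ij}=A_{ji}$) eliminates the asymmetric cases outright and pins $\ell\geq 2$, so no such repair is needed and the strict inequality follows uniformly; the only routine care required is to check that the combined shift never creates a loop (the sources $k=i$ and $k=j$ are excluded, matching the condition $k\neq j$ of Definition~\ref{Definition:ShiftDigraph}) and that it genuinely preserves symmetry.
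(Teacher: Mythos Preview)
Your proposal is correct and follows essentially the same route as the paper: pass to symmetric digraph realizations of $(a',a')$, perform combined horizontal/vertical $(i,j)$-shifts, split $\hat{R}_2(a',a')$ into $M_{i,j}$ and its complement, and invoke Proposition~\ref{Proposition:PascalsTriangle} on the scenario counts. Your observation that symmetry forces $a'_i-a'_j=c-d$ (hence $\ell\geq 2$ uniformly) is exactly what the paper obtains by noting that only the two symmetric cases $a'_{ij}=a'_{ji}\in\{0,1\}$ survive, eliminating the $\ell=1$ case; you have simply phrased the same elimination a bit more directly.
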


\begin{proof}
We consider lists $(a,a)$ and $(a',a').$ We distinguish for $G' \in \hat{R}_2(a',a')$ between the two cases $G' \in M_{i,j}(a',a')$ and $G' \in  \hat{R}_2(a',a')\setminus M(i,j)(a',a')$. Consider the following two possible schematic matrices for these cases.

\begin{description}
\item[case 1:] $G' \in \hat{R}_2(a',a')\setminus M_{i,j}(a',a').$ We consider a schematic picture for different $c.$
$A':=$\[
\begin{array}{ccccccccccccccccccc}
          &           && 1 && \ldots &c&& i             && \ldots && j             && \ldots && n                                      \\
1         & \LiKl{12} &&   &&       & && \ob{1}        &&        && \ob{0}        &&        &&   & \ReKl{12} & \rdelim\}{4}{0pt}[$c$]\\
\bigdots  &           &&   &&        &&& \mi{1}        &&        && \mi{0}                                                            \\
          &           &&   &&        &&& \mi{\myvdots} &&        && \mi{\myvdots}                                                     \\
c        &           &&   &&        &&& \un{1}        &&        && \un{0}                                                            \\
i         &           && 1  && \ldots &1  && \ob{0}        &&   && \ob{a'_{ij}}                                                      \\
j         &           && 0  && \ldots &0  && \un{a'_{ji}}  &&   && \un{0}                                                            \\
\bigdots  &           &&   &&        &&& \ob{1}        &&        && \ob{1}                                                            \\
          &           &&   &&        &&& \mi{\myvdots} &&        && \mi{\myvdots}                                                     \\
          &           &&   &&        &&& \un{1}        &&        && \un{1}                                                            \\
          &           &&   &&       & && \ob{0}        &&        && \ob{0}                                                            \\
          &           &&   &&       & && \mi{\myvdots} &&        && \mi{\myvdots}                                                     \\
n         &           &&   &&        &&& \un{0}        &&        && \un{0}                                                            \\
          &           &&   &&        &&& a_i'          &&        && a_j'
\end{array}
\]
Since, $A'$ is symmetric we have the two possibilities a) $a'_{ij}=0$ and $a'_{ji}=0$ and b) $a'_{ij}=1$ and $a'_{ji}=1.$ Hence, we have at least two possible vertical and two possible horizontal $(i,j)$-shifts. It follows that the number of symmetric digraph realizations of $(a,a)$ is at least twice the cardinality of $\hat{R}_2(a',a')\setminus M_{i,j}(a',a').$

\item[case 2:] $G' \in   M(i,j.)$ Again we consider all possible scenarios for pairs $(c,d)$ see the proof of Proposition \ref{Proposition:NumberAdjacentRealizations}. We show a schematic picture.

$A':=$\[
\begin{array}{ccccccccccccccccccc}
          &           && 1 && \ldots &c &\ldots&&& i             &&  j             && \ldots && n                                      \\
1         & \LiKl{15} &&   &&      &&   &&& \ob{1}               && \ob{0}        &&        &&   & \ReKl{15} & \rdelim\}{4}{0pt}[$c$]\\
\bigdots  &           &&   &&   &&     &&& \mi{1}                && \mi{0}                                                            \\
          &           &&   &&      &&  &&& \mi{\myvdots}         && \mi{\myvdots}                                                     \\
c        &           &&   &&      &&   &&& \un{1}               && \un{0}                                                            \\
\bigdots  &           &&   &&     &&   &&& \ob{0}                && \ob{1}        &&        &&   &           & \rdelim\}{3}{0pt}[$d$]\\
          &           &&   &&    &&   & && \mi{\myvdots}         && \mi{\myvdots}                                                     \\
c+d     &           &&   &&      &&   & && \un{0}              && \un{1}                                                            \\
i         &           && 1  &&  \ldots &1  &0&\ldots   &0& \ob{0}              && \ob{a'_{ij}}                                                      \\
j         &           && 0  &&   \ldots& 0  &1&\ldots &1& \un{a'_{ji}}         && \un{0}                                                            \\
\bigdots  &           &&   &&    &&   & && \ob{1}                && \ob{1}                                                            \\
          &           &&   &&    &&   & && \mi{\myvdots}         && \mi{\myvdots}                                                     \\
          &           &&   &&    &&    &&& \un{1}                && \un{1}                                                            \\
          &           &&   &&    &&    & && \ob{0}               && \ob{0}                                                            \\
          &           &&   &&   &&     & && \mi{\myvdots}        && \mi{\myvdots}                                                     \\
n         &           &&   &&   &&    & && \un{0}                && \un{0}                                                            \\
          &           &&   &&   &&    & && a_i'                  && a_j'
\end{array}
\]
Note that $d:=c-\ell$ and $d \geq 1.$ Since, $A'$ is symmetric we have the two possibilities a) $a'_{ij}=0$ and $a'_{ji}=0$ and b) $a'_{ij}=1$ and $a'_{ji}=1.$ Hence, we have at least three possible vertical and three possible $(i,j)$-shifts and so $\ell \geq 2.$ 
In both cases there do exist ${2c-\ell \choose c}$ digraph realizations for $(a',a')$ and ${2c-\ell \choose c-\ell+1}$ digraph realizations of $(a,a).$ With Proposition \ref{Proposition:PascalsTriangle} we have ${2i'-l \choose i'-l+1} > {2i'-l \choose i'}.$
\end{description}
Since, there exists a bijective mapping between  symmetric digraphic lists and graphic lists, case 1 and case 2 lead to $N_3(a)>N_3(a').$
\end{proof}

\begin{corollary}\label{Korollar:CountGraphRealizations}
Let $a$ be a nonincreasing integer list, $a'$ be an integer list, $a \prec a'$ and $a\neq a'.$ Then the number of graph realizations of $a$ is larger than the number of graph realizations of $a'$, i.e., $N_3(a) > N_3(a').$
\end{corollary}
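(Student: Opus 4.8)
The plan is to mirror the proofs of Corollaries~\ref{Korollar:CountLoopDigraphRealizations} and \ref{Korollar:CountLoopRealizations}, using the single-transfer estimate of Theorem~\ref{Theorem:CountGRaphRealizations} as the inductive building block and a transfer path to connect $a'$ to $a$. First I would dispose of the degenerate case: if $a'$ is not graphic, then $N_3(a')=0$, so the only content is whether $N_3(a)>0$; since this is exactly the regime in which one expects $a'$ to be realizable (compare the hypotheses of the two earlier corollaries, where the majorizing list is always assumed realizable), I would treat $N_3(a')=0$ as the trivial boundary and concentrate on graphic $a'$. Assuming $a'$ graphic, Theorem~\ref{Theorem:AignerTriesch} (Aigner and Triesch) guarantees that $a$ is itself graphic and that there is a transfer path $a'=a^1,a^2,\dots,a^r=a$ of graphic lists, each obtained from its predecessor by a single unit transfer; the construction in Muirhead's Theorem~\ref{MuirheadLemma} moreover yields the nesting $a^{i+1}\prec a^i$.

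The core is then an induction on the length $r$ of this path, exactly as in Corollary~\ref{Korollar:CountLoopRealizations}. In the base case $r=2$ the target $a$ is nonincreasing and the majorizing list $a'=a^1$ is graphic, so the strict half of Theorem~\ref{Theorem:CountGRaphRealizations} applies directly and gives $N_3(a)>N_3(a')$. For $r>2$ I would apply the induction hypothesis to the shorter path $a^2,\dots,a^r=a$ (here $a$ is still nonincreasing, $a^2$ is graphic, and $a\prec a^2$ with $a\neq a^2$) to obtain $N_3(a)>N_3(a^2)$; the single transfer $a^1\to a^2$ is then handled by the non-strict inequality of Theorem~\ref{Theorem:CountGRaphRealizations}, yielding $N_3(a^1)\le N_3(a^2)$, and chaining gives $N_3(a')=N_3(a^1)\le N_3(a^2)<N_3(a)$. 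In this way the strict inequality is picked up exactly once, at the step reaching the nonincreasing target.

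The main obstacle I anticipate is an ordering mismatch. Theorem~\ref{Theorem:CountGRaphRealizations} requires the smaller list of each transfer step to be nonincreasing, whereas the intermediate lists produced by Muirhead's construction need not be sorted, so the step $a^1\to a^2$ cannot be invoked verbatim when $a^2$ is out of order. I would resolve this using the fact that $N_3$ depends only on the multiset of degrees, since relabelling vertices is a bijection on realizations (the undirected analogue of Proposition~\ref{PropositionOrderingOfTuplesLoopDigraphSequences}): one may replace $a'$ by its nonincreasing rearrangement without changing $N_3(a')$ and without destroying $a\prec a'$, and more generally re-sort each intermediate list before applying the single-transfer estimate. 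Equivalently, one refines Muirhead's theorem to a transfer path through \emph{nonincreasing} lists, choosing at each step a transfer between coordinates of nearly equal value so that sortedness is preserved. Checking that such a sorted path exists is the one place that needs genuine care; the remainder is a routine transcription of the loop-digraph and digraph arguments.
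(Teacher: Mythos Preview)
Your approach mirrors the paper's exactly: both obtain a transfer path via Theorem~\ref{Theorem:AignerTriesch}, induct on its length, apply the (non-strict) inequality of Theorem~\ref{Theorem:CountGRaphRealizations} to the first step $a^1\to a^2$, and pick up the strict inequality from the induction hypothesis on the remaining path $a^2,\dots,a^r=a$. The ordering concern you raise is one the paper simply does not address---it invokes Theorem~\ref{Theorem:CountGRaphRealizations} for $a^1\to a^2$ without checking that $a^2$ is nonincreasing---so your extra care (and your remedy via relabelling, which is harmless since $N_3$ is permutation-invariant) is if anything an improvement over the paper's argument.
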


\begin{proof}
There exists at least one transfer path $a':=a^1,\dots,a^r=:s$ with $a^{i+1} \prec a^{i}$ such that $a^i$ yields $a^{i+1}$ by a unit transfer see Theorem~\ref{Theorem:AignerTriesch}. We show by induction on $r$ the correctness of the claim. For $r=2$ we apply Theorem~\ref{Theorem:CountGRaphRealizations} and get $N_3(a')<N_3(a).$ Let us now assume $r>2.$ We consider the transfer path $a^2,\dots, a.$ With our induction hypothesis we can conclude $N_3(a^2)<N_3(a).$ For $a'$ and $a^2$ we apply again Theorem~\ref{Theorem:CountGRaphRealizations}. This yields $N_3(a')\leq N_3(a^2)<N_3(a).$
\end{proof}
  
Similar to the loop-digraph case a transfer path for graphic lists is strictly monotone with respect to the number of realizations. This is not always the case for digraphic lists. It is possible to find an analogous result for special types of transfer paths like in Corollary \ref{Corollar:exponentiellerTransferPath}, i.e.\ one can find exponential lower bounds for the number of graph realizations for some transfer paths. Finally, we again find a result for minconvex graphic lists. 

\begin{corollary}\label{Korrolar:GraphMinconvexSequences}
Let $a$ be a nonincreasing graphic list with $\sum_{i=1}^{n}a_i=m$. Then we find for the minconvex list $\alpha$ that the number of graph realizations of $\alpha$ is larger or equals the number of graph realizations of $a$, $N_3(\alpha)\geq N_3(a).$
\end{corollary}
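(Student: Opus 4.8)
The plan is to reduce this corollary to the two ingredients already in hand, exactly as was done for loop-digraphs in Corollary~\ref{Korollar:NumberRealizationsMinconvexLoop} and for digraphs in Corollary~\ref{Korrolar:opposedMinconvexSequences}, but now the argument is strictly simpler because a graphic list is a single vector rather than a pair. First I would observe that the minconvex list $\alpha$ is nonincreasing by its very definition (its entries are $\lfloor m/n\rfloor+1$ followed by $\lfloor m/n\rfloor$). This is the crucial structural fact, since every majorization result in this paper requires that the \emph{majorized} list be the nonincreasing one.

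Next I would invoke Theorem~\ref{Theorem:MinconvexSequence}: since $a$ is a nonincreasing integer list with $\sum_{i=1}^n a_i=m$, we immediately get $\alpha\prec a$. Thus $\alpha$ is a nonincreasing list that is majorized by the graphic list $a$. Before comparing the number of realizations I would note that $\alpha$ is itself graphic: applying Theorem~\ref{Theorem:AignerTriesch} (Aigner--Triesch) to the nonincreasing $\alpha$ and the graphic $a$ with $\alpha\prec a$ shows that $\alpha$ is graphic and can even be reached from $a$ by unit transfers through graphic lists, so $N_3(\alpha)$ is a meaningful positive count.

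The conclusion then splits into the trivial and the substantive case. If $\alpha=a$, then $N_3(\alpha)=N_3(a)$ and the weak inequality $N_3(\alpha)\geq N_3(a)$ holds with equality. If $\alpha\neq a$, I would apply Corollary~\ref{Korollar:CountGraphRealizations} with $\alpha$ playing the role of the nonincreasing list and $a$ the role of $a'$: from $\alpha$ nonincreasing, $\alpha\prec a$, and $\alpha\neq a$ it follows that $N_3(\alpha)>N_3(a)$. Combining both cases yields $N_3(\alpha)\geq N_3(a)$, as claimed.

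There is essentially no serious obstacle here; the only point requiring care is the bookkeeping of which list occupies the ``nonincreasing/majorized'' slot in Corollary~\ref{Korollar:CountGraphRealizations}. Unlike the digraph case, no reordering of a second vector and no appeal to opposed lists or Theorem~\ref{Theorem:opposedSequences} is needed, since $a$ is already nonincreasing and carries no paired component; this is precisely why the graphic version is the cleanest of the three and why the inequality is the sharp strict one whenever $a$ fails to be minconvex.
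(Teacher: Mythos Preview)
Your proof is correct and follows the same approach as the paper: invoke Theorem~\ref{Theorem:MinconvexSequence} to obtain $\alpha\prec a$ and then apply Corollary~\ref{Korollar:CountGraphRealizations}. You are in fact more careful than the paper, which omits both the verification that $\alpha$ is nonincreasing and the case split $\alpha=a$ versus $\alpha\neq a$ that is formally needed because Corollary~\ref{Korollar:CountGraphRealizations} assumes $a\neq a'$.
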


\begin{proof}
With Theorem~\ref{Theorem:MinconvexSequence} we find that $\alpha \prec a.$ Applying Corollary~\ref{Korollar:CountGraphRealizations} we get $N_3(\alpha)\geq N_3(a).$
\end{proof}

\subsection*{Acknowledgement.} Many thanks to my colleage Ivo Hedtke for beautifully typesetting in LaTeX the matrices in the proofs of 
Propositions~\ref{Proposition:NumberAdjacentloopRealizations}, Proposition~\ref{Proposition:NumberAdjacentRealizations} and 
Theorem~\ref{Theorem:CountDigraphRealizationsLexicographic}.

\providecommand{\bysame}{\leavevmode\hbox to3em{\hrulefill}\thinspace}
\providecommand{\MR}{\relax\ifhmode\unskip\space\fi MR }
\providecommand{\MRhref}[2]{%
  \href{http://www.ams.org/mathscinet-getitem?mr=#1}{#2}
}
\providecommand{\href}[2]{#2}

\end{document}